\newcommand{\bvp}{{\overline{\vp}}}
\newcommand{\wG}{{\widehat{\mathcal{G}}}}
\newcommand{\q}{u(g)^{-1}}
\newcommand{\wh}[1]{\widehat{#1}}
\newcommand{\ve}{\varepsilon}
\newcommand{\vp}{\varphi}
\newcommand{\ld}{\ldots}
\newcommand{\diag}{\mathrm{diag}}
\DeclareMathOperator*{\ot}{\otimes}
\DeclareMathOperator*{\op}{\oplus}
\newcommand{\beq}{\begin{equation}}
\newcommand{\eeq}{\end{equation}}
\newcommand{\beas}{\begin{eqnarray*}}
\newcommand{\eeas}{\end{eqnarray*}}
\newcommand{\id}{\mathrm{id}}
\newcommand{\cA}{\mathcal{A}}
\newcommand{\cB}{{\mathcal B}}
\newcommand{\cC}{\mathcal{C}}
\newcommand{\cD}{\mathcal{D}}
\newcommand{\cG}{\mathcal{G}}
\newcommand{\cK}{\mathcal{K}}
\newcommand{\cP}{\mathcal{P}}
\newcommand{\cR}{\mathcal{R}}
\newcommand{\cS}{\mathcal{S}}
\newcommand{\cT}{\mathcal{T}}
\newcommand{\fG}{\mathfrak{G}}
\newcommand{\fD}{\mathfrak{D}}
\newcommand{\fT}{\mathfrak{T}}
\newcommand{\fS}{\mathfrak{S}}
\newcommand{\R}{\mathbb{R}}
\newcommand{\C}{\mathbb{C}}
\newcommand{\NN}{\mathbb{N}}
\newcommand{\ZZ}{\mathbb{Z}}
\newcommand{\FF}{\mathbb{F}}
\newcommand{\charac}{\mathrm{char}}
\DeclareMathOperator{\End}{\mathrm{End}}
\DeclareMathOperator{\Ker}{\mathrm{Ker}\,}
\DeclareMathOperator{\rank}{\mathrm{rank}\,}
\DeclareMathOperator{\supp}{\mathrm{Supp}\,}
\DeclareMathOperator{\Supp}{\mathrm{Supp}}
\newcommand{\Sl}{\mathfrak{sl}}
\newcommand{\GL}{\mathrm{GL}}
\newcommand{\Span}{\mathrm{Span}}
\newtheorem{theorem}{Theorem}[section]
\newtheorem{lemma}[theorem]{Lemma}
\newtheorem{corollary}[theorem]{Corollary}
\newtheorem{proposition}[theorem]{Proposition}
\newtheorem{definition}[theorem]{Definition}
\newtheorem{remark}[theorem]{Remark}
\newcommand{\Q}{\mathfrak{Q}}
\begin{document}

\title{Group gradings and actions of pointed Hopf algebras}

\author[Bahturin]{Yuri Bahturin}
\address{Department of Mathematics and Statistics, Memorial
University of Newfoundland, St. John's, NL, A1C5S7, Canada}
\email{bahturin@mun.ca}

\author[Montgomery]{Susan Montgomery}
\address{Department of Mathematics, University of Southern California}
\email{smontgom@usc.edu}

\thanks{{\em Keywords:} Hopf algebras, graded algebras, algebras given by generators and defining relations}
\thanks{{\em 2010 Mathematics Subject Classification:} Primary 16T05, Secondary 16W50, 17B37.}
\thanks{The first author acknowledges a partial support by NSERC Discovery Grant 2019-05695. The first and the second authors acknowledge support by Women in Science grant at USC}

\begin{abstract}
We study actions of pointed Hopf algebras on matrix algebras. Our approach is based on known facts about group gradings of matrix algebras \cite{BSZ01} and other sources.
\end{abstract}

\maketitle

\section{Introduction}\label{s1}

Many papers have been devoted to the study of actions of Hopf algebras on algebras.
However very few of them have actually classified the possible actions of a certain Hopf
algebra on a given algebra. One of the first of these papers was the determination of
the actions of the Taft algebra and its double on a one generator algebra  \cite{MS}. More recently
there has been a series of papers, such as \cite{EW,CE} on mostly semisimple Hopf algebras
acting on certain rings, usually domains. Finally in \cite{G}, the actions of a Taft algebra on
finite-dimensional algebras have been studied, in order to look at their polynomial identities.

In this paper we study the actions of finite-dimensional pointed Hopf algebras with abelian group of group-like elements on matrix algebras. Our methods are novel, in that we use very strongly the
classification of group gradings on matrices, as in \cite{BSZ01}. We also use heavily the well-known fact (see \cite{AM} and also \cite[Chapter 6]{SM}) that any action of a Hopf algebra on a central simple algebra is inner. One motivation for this work is to try to extend  the 
work of \cite{VOZ} and \cite{CC} on the Hopf Brauer group to Hopf algebras which are not quasitriangular''.

To state our results in more detail, we need few standard general definitions.

Let $H$ be a (finite-dimensional) Hopf algebra over a field $\FF$. Suppose that $H$ acts on a unital algebra $\cA$. This means that there is a unital left $H$-module structure on $\cA$ written by $(h,a)\to h*a$ such that $h\ast 1=\ve(h)1$ and $h\ast(ab)=(h_{(1)}\ast a)(h_{(2)}\ast b)$ where $h\in H$, $a,b\in \cA$ and $\Delta(h)=h_{(1)}\otimes h_{(2)}$. If $\circ$ is another action of $H$ on $\cA$, we say that the actions are \textit{isomorphic }if there is an algebra automorphism $\vp:\cA\to \cA$ such that $h\circ\vp(a)=\vp(h\ast a)$ for any $h\in H$ and any $a\in \cA$. The actions are called \textit{equivalent} if there is an automorphism of algebras $\vp:\cA\to \cA$ and an automorphism of Hopf algebras $\alpha:H\to H$ such that $\alpha(h)\circ\vp(a)=\vp(h\ast a)$ for any $h\in H$ and any $a\in \cA$.

We say that the action is \textit{inner} if there is a convolution invertible linear map $u: H\to \cA$ such that, for any $h\in H$ and $a\in \cA$ we have 

\[
h\ast a =\sum_{(h)} u(h_{(1)})au^{-1}(h_{(2)}).
\]

In Section \ref{ssGG}, we recall the classification of gradings by abelian groups on the matrix algebras. In the case where the group $\cG$ of group-likes of a Hopf algebra $H$ is semisimple, the action of $\cG$ is equivalent to the grading by the dual group $\fG$. Since any finite-dimensional pointed Hopf algebra $H$ with abelian group $\cG$ of group likes is generated by the group like and skew-primitive elements (see the survey \cite{AGI}, to determine the action of $H$ we need to know the action of skew-primitive elements on $\fG$-graded algebras. Some information about this is given in Proposition \ref{p99}.

In Section \ref{sACSA} we give the criterion of isomorphism of two actions of a finite-dimensional pointed Hopf algebras on a matrix algebra (Theorem \ref{liso_inn}) in terms of their respective inner actions.

In Section \ref{PHARO} we recall the definition of a class of pointed Hopf algebras by Anruskiewitshch - Schneider \cite{AS} in terms of generators and defining relations. Then, in Theorems \ref{pCR}, \ref{lSC}, \ref{pij-ji} we determine the relations satisfied by the matrices of the inner actions of these generators. This enables us, in what follows, to approach the classification of the actions, up to isomorphism. 

In Section \ref{ssMFCGGL} we determine the canonical form of these matrices in the case of pointed Hopf algebras of rank 1, with the cyclic group of group-likes. 

In Section \ref{ssAHDDG} we study the actions of a pointed Hopf algebra $H$ on a matrix  algebra $\cA$ in the case where the respective $\cG$-grading is a division grading, that is, $\cA$ is a $\cG$-graded division algebra. We call such actions division actions. Theorem \ref{pR1HHADG} treats division actions in the case where $H$ is a pointed Hopf Algebra of rank 1. In Proposition \ref{pp3} we classify, up to isomorphism, the division actions of a pointed Hopf algebra of dimension $p^3$ from \cite{AS}.

In Section \ref{pmixed}, we  treat a much harder case of mixed gradings. 

In Section \ref{TOM} we deal with the actions of the Taft algebra $T_n(\omega)$ canonically generated by $g,x$ on the matrix algebras. Theorem \ref{tATA} gives the classification of such actions in the case where the matrix $u(x)$ of the inner action of $x$ is non-singular.  In Theorem \ref{t1} we give a complete classification in the case of the actions of $T_3(\omega)$ on $M_3(\omega)$.

Section \ref{sDD} is devoted to the study of the actions of the Drinfeld double $D(T_n(\omega))$ of $T_n(\omega)$ on matrix algebras. In Section \ref{ssADTN}, we give the matrices for the division action $D(T_n(\omega))$. The case of mixed action is treated, up to isomorphism, in the case of $D(T_2)$. In Section \ref{ssGEADD} we treat the case where the action of $D(T_n(\omega))$ is elementary.

In section \ref{suqsl2}, we describe the matrix form of the actions of $u_q(\Sl_2)$ on the matrix algebras. In Section \ref{ssBHA} we handle the inner actions of so called Book Hopf algebras from \cite{AS} and in Section \ref{ssC23} we classify the actions of $u_q(\Sl_2)$ on $M_2(\FF)$. In Section \ref{ssDDUQ} we note that all actions of $u_q(\Sl_2)$ can be lifted up to the elementary actions of  $D(T_q))$ but no division grading of $D(T_q)$ is a lifting of an action of $u_q(\Sl_2)$.

\section{Group gradings}\label{ssGG} Let $\cG=G(H)$ the group of group-like elements in $H$.  Denote by $\fG$ the group of multiplicative $\FF$-characters of $\cG$.  From now on we will be assuming that $\cG$ is abelian and $\FF$ is such that  $|\fG|=|\cG|$ (for instance, $\FF$ algebraically closed of characteristic coprime to $|\cG|$, or $\FF=\R$ and $|\cG|=2$, etc.). In this case, it is well-known that the algebra $\cA$ acquires a $\fG$-grading 
\begin{equation}\label{egr}
\Gamma: \cA=\oplus_{\vp\in\fG}\cA_\vp
\end{equation}
given by 
\[
\cA_\vp=\{ a\in \cA\;\vert\; g\ast a=\vp(g)a\mbox{ for any }g\in \cG\}.
\]
If $a\in \cA_\vp$ we also write $\deg a=\vp$. The set 
\[
\supp\Gamma=\{ \vp\in\fG\:|\:\cA_\vp\ne\{ 0\}
\]
is called the \textit{support} of the grading $\Gamma$. The subspaces $\cA_\gamma$ are called \textit{homogeneous components} of $\cA$. An element $a\in\cA_\gamma$ is called \textit{homogeneous of degree} $\gamma$. One writes $\deg a=\gamma$. A subspace $\cB$ is called \textit{graded} if $\cB=\bigoplus_{\gamma\in\fG}(\cB\cap\cA_\gamma)$.

Group gradings on $M_n(\FF)$ have been completely described over arbitrary algebraically closed fields $\FF$ (\cite{BZP}, see also \cite{NVO}, \cite[Chapter 2]{EK} and references therein). Let us call a graded algebra $\cA$ \textit{graded-simple} if it has no nontrivial proper graded ideals. A $\fG$-graded algebra $\cD$ is called \textit{graded-division} if all nonzero  homogeneous elements of $\cD$ are invertible. In full analogy with  the classical Wedderburn - Artin theorem, the following is true.

\begin{theorem}\label{t_2.6.}
Let $\fG$ be a group and let $\cA$ be a $\fG$-graded algebra over a field $\FF$. If $\cA$ is graded
simple satisfying the descending chain condition on graded left ideals, then there
exists a $\fG$-graded algebra $\cD$ and a $\fG$-graded right $\cD$-module $V$ such that $\cD$ is a $\fG$-graded
division algebra, $V$ is finite-dimensional over $\cD$, and $\cA$ is isomorphic to $\End_\cD V$
as a $\fG$-graded algebra.
\end{theorem}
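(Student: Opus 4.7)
The plan is to mirror the classical Wedderburn--Artin proof in the graded category. First, using the DCC on graded left ideals, pick a minimal graded left ideal $I\subseteq\cA$. I would then prove a graded Schur's lemma: the algebra $\cD$ of graded $\cA$-linear maps $I\to I$ of all homogeneous degrees, made into a $\fG$-graded algebra in the natural way, is a $\fG$-graded division algebra. Indeed, if $\vp\colon I\to I$ is homogeneous of degree $\gamma$ and nonzero, then both $\Ker\vp$ and $\im\vp$ are graded $\cA$-submodules of $I$ (up to a shift by $\gamma$), so minimality of $I$ forces $\Ker\vp=0$ and $\im\vp=I$; the inverse $\vp^{-1}$ is then homogeneous of degree $\gamma^{\bo}$, so every nonzero homogeneous element of $\cD$ is invertible.

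Next, treating $I$ as a graded right $\cD$-module by evaluation, I would consider the natural graded algebra homomorphism
\[
\lambda\colon \cA\longrightarrow \End_\cD(I),\qquad a\mapsto (x\mapsto ax).
\]
Its kernel is a graded two-sided ideal of $\cA$, and $\cA I$ is a nonzero graded two-sided ideal, hence equals $\cA$ by graded-simplicity; in particular nonzero elements of $\cA$ act nontrivially on $I$, so $\lambda$ is injective.

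To obtain surjectivity and the finite-dimensionality of $V:=I$ over $\cD$, I would push DCC further. Each left translate $aI$ is either zero or graded-simple, isomorphic to a shift of $I$, and $\cA=\cA I$ is the sum of such translates. A standard ``shrink to a direct sum'' argument---available because the sum ranges over graded-simple submodules and DCC bounds the chain of partial sums---produces finitely many $a_1,\ldots,a_n\in\cA$ with $\cA=a_1I\oplus\cdots\oplus a_nI$ as a graded left $\cA$-module. Dualising, $\cA^{\op}\cong\End_\cA^{\gr}(\cA)$ becomes a graded matrix algebra over $\cD$ of size $n$ with shifts dictated by the $a_iI$, which is equivalent to saying that $\lambda$ is surjective onto $\End_\cD(V)$ with $V$ free of rank $n$ over $\cD$; a direct alternative is to invoke a graded Jacobson density theorem and use the finite decomposition of $\cA$ to promote density to surjectivity.

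I expect the principal obstacle to be the systematic tracking of degree shifts. The summands $a_iI$ are isomorphic to $I$ only up to shifts in $\fG$, so ``finite-dimensional over $\cD$'' must be interpreted in the graded sense as a finite direct sum of shifted copies of $\cD$; verifying that each of graded Schur, the graded decomposition of $\cA$, and the final identification with $\End_\cD(V)$ is genuinely $\fG$-graded---and not merely an ungraded isomorphism which happens to respect degrees---is where the extra care beyond the classical Wedderburn--Artin argument is needed. Once this bookkeeping is in place, the formal structure of the proof is identical to the classical one.
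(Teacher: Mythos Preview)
The paper does not prove this theorem: it is quoted without proof as a known structural result, the graded analogue of Wedderburn--Artin, with the surrounding text pointing to \cite{BSZ01}, \cite{NVO}, \cite{EK} for background. So there is no ``paper's own proof'' to compare against; your outline is the standard argument one finds in those references.

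Your sketch is essentially correct. One small wrinkle in the injectivity step: you assert that $\cA I$ is a graded two-sided ideal, but since $I$ is a left ideal and $\cA$ is unital, $\cA I=I$, which is only a left ideal. The argument you actually need is already contained in your first clause: $\Ker\lambda$ is itself a graded two-sided ideal of $\cA$, and it is proper because $1\notin\Ker\lambda$ (as $I\ne 0$), so graded-simplicity forces $\Ker\lambda=0$ directly. The equality $\cA=\sum_a aI$ over homogeneous $a$ is what you need later, for the semisimple decomposition of $\cA$ as a graded left module over itself, not for injectivity. With that correction, and with the degree-shift bookkeeping you already flag as the delicate point, the argument goes through exactly as in the ungraded case.
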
 

Note that if $\{ e_1,\ld,e_m\}$ is a graded basis of $V$ as a right vector space over $\cD$, with $\deg e_i=\gamma_i,\: i=1,\ld,n$, then $\End_\cD V$ is spanned by the elements $E_{ij}d$ where $d$ is a graded element of $\cD$ and $E_{ij}(e_j)=e_i$. The grading is given  by 
\[
\deg E_{ij}d=\gamma_i(\deg d)\gamma_j^{-1}.
\]
Using Kronecker products, we can write $\cA\cong M_n(\FF)\otimes \cD$ and identify $E_{ij}d$ with $E_{ij}\ot d$. Then $\deg (E_{ij}\ot d)=\gamma_i(\deg d)\gamma_j^{-1}$. 

If $\fG$ is abelian then we can simply say that $\deg E_{ij}d=(\deg E_{ij})(\deg d)$. We often say that $V$ is a right  graded vector space over a graded division algebra $\cD$. If $\cD=\FF$, the above grading is called \textit{elementary}, defined by the $n$-tuple $\gamma_1,\ld,\gamma_n$. If $\dim_\cD V=1$ then the grading is called \textit{division}. In this latter case, any nonzero $a\in \cA_\chi$ is invertible, for any $\chi\in G$.  We will use the terms \textit{elementary} and \textit{division} also for the actions of a Hopf algebra $H$, with $G(H)$ abelian, on a matrix algebra $M_m(\FF)$ in the case where the respective grading of $M_m(\FF)$ by $\fG$ is elementary or division. A simple remark is the following: 

\begin{remark}\label{r_no_division} If $|G(H)|$ is coprime with $m$, then the action of $H$ on $M_m(\FF)$ is elementary. Also if $0\ne p=\mathrm{char} (\FF)$ and $p|m$ then the action of $H$ on $M_m(\FF)$ is elementary.
\end{remark}

If the grading is elementary and $\dim_\FF V=n$ then the grading is completely defined by the \textit{dimension function} $\kappa: \Gamma\to\NN_0$ defined as follows. Let $V=\oplus_{\gamma\in \Gamma}V_\gamma$. Then $\kappa(\gamma)=\dim V_\gamma$.  The group $\fG$ naturally acts on the set of such functions if one defines $(\gamma\ast\kappa)(\gamma')=\kappa(\gamma\gamma')$. Two elementary gradings, defined by the dimension functions $\kappa$ and $\kappa'$, are isomorphic if and only if $\kappa'\in \fG\ast\kappa$.   Let us take a graded basis $\{ e_1,\ld,e_n\}$ in $V$, with $\deg e_i=\gamma_i$. Then $\kappa(\gamma)$  is the number of elements $e_i$ in the basis such that  $\deg e_i=\gamma$. A graded basis of $\End_\FF V$ is then formed by the maps $E_{ij}$, as above, and $\deg E_{ij}=\gamma_i\gamma_j^{-1}$.  

\medskip

No description of graded division algebras $\cD$ over arbitrary fields is available, but in the case of an algebraically closed field of characteristic zero or coprime to the orders of elements in $\fG$ they are known to be isomorphic to the twisted group algebras $\cD=\FF^\sigma\fT$, where $\sigma\in Z^2(\fT,\FF^\times)$ is a 2-cocycle (actually, bicharacter) on $\fT$ with values in $\FF^\times$. Two algebras corresponding to the cocycles $\sigma_1$ and $\sigma_2$ are graded isomorphic if and only if $\sigma_1$ and $\sigma_2$ belong to the same cohomology class in $Z^2(\fT,\FF^\times)$. In the cohomology class of a bicharacter $\sigma$ there is precisely one \textit{alternating bicharacter} $\beta: \fT\times\fT\to\FF^\times$, given by  
\[
\beta(\vp,\psi)=\frac{\sigma(\vp,\psi)}{\sigma(\psi,\vp)}.
\]
An algebra $\cD=\FF^\sigma\fT$ is simple if and only if $\beta: \fT\times\fT\to\FF^\times$ is nonsingular, that is, $\Ker\beta=\{\vp\:|\:\beta(\vp,\psi)=1, \mbox{ for all }\psi\in\fT\}$ is trivial. In the Classification Theorem \ref{t_2.6.}, in the case of simple algebras, the graded division algebra $\cD$ is simple, as an ungraded algebra. As a result, $\cD$ is  defined, up to isomorphism, by its support $\fT$ and a nonsingular alternating bicharacter $\beta:\fT\times\fT\to\FF^\times$. Different bicharacters lead to nonisomorphic graded division algebras $\cD(\fT,\beta)$. Note that each homogeneous component $\cD_\tau$ of $\cD(\fT,\beta)$ is one dimensional, and its basis $\{ X_\tau\}$, can be chosen so that 
\begin{equation}\label{e0101}
X_\tau^{o(\tau)}=I\mbox{ and }X_{\tau_2}X_{\tau_1}=\beta(\tau_2,\tau_1)X_{\tau_1}X_{\tau_2}\mbox{ where }o(\tau)\mbox{ is the order of }\tau\in\fT. 
\end{equation}
\begin{theorem}\label{t_2.15}\emph{(\cite{BSZ01}, also \cite[Chapter 2]{EK})} Let $\fT$ be a finite abelian group and let $\FF$ be an algebraically closed field. There exists a grading on the matrix algebra $M_n(\FF)$ with support $\fT$ making $M_n(\FF)$ a graded division algebra over $\fT$  if and only if $\charac\:\FF$ does not divide $n$ and
\begin{equation}\label{e444}
\fT\cong \ZZ_{\ell_1}^2\times\cdots\times \ZZ_{\ell_r}^2,\mbox{ where }\ell_1\cdots\ell_r=n.
\end{equation}
The isomorphism classes of such gradings
are in one-to-one correspondence with nondegenerate alternating bicharacters
$\beta : \fT\times \fT\to\FF^\times$. All such gradings belong to one equivalence class.
\end{theorem}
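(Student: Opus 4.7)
The plan is to exploit the structural description of graded division algebras over algebraically closed fields, which was recalled just before the theorem: any such algebra is a twisted group algebra $\FF^\sigma\fT$, and it is simple as an ungraded algebra if and only if the alternating bicharacter $\beta$ associated with $\sigma$ is nondegenerate. So the task is essentially to translate the three assertions of the theorem into statements about $\fT$ equipped with a nondegenerate alternating bicharacter into $\FF^\times$.

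First I would handle the necessity part. If $M_n(\FF)$ carries a $\fT$-grading as a graded division algebra, then by the general classification $M_n(\FF)\cong \FF^\sigma\fT$ as graded algebras, every homogeneous component is one-dimensional, so counting dimensions yields $|\fT|=n^2$. Because $M_n(\FF)$ is simple, the alternating bicharacter $\beta(\vp,\psi)=\sigma(\vp,\psi)/\sigma(\psi,\vp)$ must be nondegenerate. The structure theorem for finite abelian groups equipped with a nondegenerate alternating $\FF^\times$-valued bicharacter then forces $\fT\cong \ZZ_{\ell_1}^2\times\cdots\times\ZZ_{\ell_r}^2$ with $\ell_1\cdots \ell_r=n$: one picks an element $\tau_1$ of maximal order $\ell_1$, finds a hyperbolic partner $\tau_1'$ using nondegeneracy of $\beta$, shows that $\langle\tau_1,\tau_1'\rangle$ is nondegenerate of shape $\ZZ_{\ell_1}^2$, and then splits off its $\beta$-orthogonal complement, repeating the procedure. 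Along the way one sees $\charac \FF\nmid\ell_i$ for each $i$ (otherwise there would be no primitive $\ell_i$-th root of unity in $\FF^\times$ to serve as the value $\beta(\tau_i,\tau_i')$), hence $\charac \FF\nmid n$.

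For sufficiency, given $\fT$ of the prescribed form and a nondegenerate alternating bicharacter $\beta$, I would build the grading explicitly. Splitting $\fT$ symplectically as above and taking the tensor product of the resulting twisted group algebras reduces the construction to the rank-one case $\fT=\ZZ_\ell\times\ZZ_\ell$. There, after choosing a primitive $\ell$-th root of unity $\zeta=\beta(\tau,\tau')$, the generalized Pauli matrices (the diagonal $\diag(1,\zeta,\zeta^2,\ld,\zeta^{\ell-1})$ and the cyclic shift) produce the required homogeneous basis $\{X_\sigma\}$ of an $M_\ell(\FF)$ satisfying the relations \eqref{e0101}, giving a graded division algebra structure. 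Tensoring these $r$ realizations over $\FF$ yields a $\fT$-grading on $M_n(\FF)$ with the prescribed $\beta$. The bijection between isomorphism classes of such gradings and nondegenerate alternating bicharacters is then immediate from the general statement, already recalled, that two cocycles produce isomorphic twisted group algebras precisely when they are cohomologous, together with the fact that each cohomology class contains a unique alternating representative.

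Finally, for the statement that all such gradings belong to a single equivalence class, I would invoke a Witt-type theorem for finite symplectic abelian groups: any two nondegenerate alternating $\FF^\times$-valued bicharacters on $\fT$ differ by an automorphism of $\fT$. Concretely, one uses the symplectic splitting procedure above to produce, in each of the two bicharacters, a hyperbolic basis, and the map sending one such basis to the other extends to a group automorphism $\alpha$ of $\fT$ relating $\beta_1$ to $\beta_2$. This automorphism transports the first grading to the second, which is exactly what equivalence of gradings means.

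The main technical hurdle is the Witt-type theorem for nondegenerate alternating bicharacters on finite abelian groups, together with the symplectic splitting used both to impose the form $\ZZ_{\ell_1}^2\times\cdots\times \ZZ_{\ell_r}^2$ and to show that any two nondegenerate $\beta$ are conjugate under $\Aut(\fT)$. Everything else (existence via Pauli matrices, dimension count, simplicity criterion) is essentially bookkeeping once the symplectic decomposition is in hand; the arguments are standard but must be done carefully to extract the precise divisibility $\charac\FF\nmid n$.
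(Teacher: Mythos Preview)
The paper does not supply its own proof of this theorem: it is quoted from \cite{BSZ01} and \cite[Chapter 2]{EK}, and the surrounding text only records the ingredients (the description of graded division algebras as twisted group algebras $\FF^\sigma\fT$, the simplicity criterion via nondegeneracy of $\beta$, and the explicit Sylvester clock/shift realization for $\ZZ_\ell^2$). Your proposal assembles precisely these ingredients in the standard way and is correct; in particular, your symplectic-splitting argument for the shape of $\fT$ and the Witt-type argument for the single equivalence class are the usual proofs found in the cited sources, so there is nothing to contrast here beyond noting that the paper itself defers the proof to the literature.
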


An explicit matrix realization of $\cD(\fT,\beta)$ is the following. First of all, using the above equation (\ref{e444})
\[
\cD(\fT,\beta)\cong\cD(\ZZ_{\ell_1}^2,\beta_1)\otimes\cD(\ZZ_{\ell_2}^2,\beta_2)\ot\cdots\ot\cD(\ZZ_{\ell_r}^2,\beta_r)
\]
where $\cD(\ZZ_n^2\,\beta)\cong \cA=M_n(\FF)$ and the grading on $M_n(\FF)$ is given, as follows. If $\ZZ_n^2\cong (\mu)\times(\nu)$, where $o(\mu)=o(\nu)=n$ and $\omega$ is a primitive $n$th root of 1, then one considers Sylvester's \textit{clock} and \textit{shift} matrices
\[
\cC=\!\!\begin{pmatrix}
1&0&0&\cdots&0&0\\
0&\omega&0&\cdots&0&0\\
0&0&\omega^2&\cdots&0&0\\
\cdots&\cdots&\cdots&\cdots&\cdots&\cdots\\
0&0&0&\cdots&0&\omega^{n-1}
\end{pmatrix}\!\!,\,
\cS=\!\!\begin{pmatrix}
0&1&0&\cdots&0&0\\
0&0&1&\cdots&0&0\\
\cdots&\cdots&\cdots&\cdots&\cdots&\cdots\\
0&0&0&\cdots&0&1\\
1&0&0&\cdots&0&0
\end{pmatrix}\!\!.
\]
Then the matrices $X_{\mu^k\nu^\ell}=\cC^k \cS^\ell$ satisfy the conditions of Equation (\ref{e0101}), with $\beta(\mu,nu)=\omega$, and the grading of  $M_n(\FF)$ is given by setting 
\[
\cA_{\mu^k\nu^\ell}=\Span\{X_{\mu^k\nu^\ell}\}=\Span\{X_{\mu^k}X_{\nu^\ell}\}=\Span\{X_{\mu}^k X_\nu^\ell\}, 
\mbox{ for all }0\le k,\ell<n.
\]

In conclusion, as we noted,  if $\cA$  is simple then $\cD$ is also a simple algebra. If the field $\FF$ is algebraically closed, $\cD\cong M_k(\FF)$. As a result, if $\cA$ is a matrix algebra over an algebraically closed field, then, for some $n,k$, we have $\cA\cong M_n(\FF)\otimes M_k(\FF)$ where the grading on $M_n(\FF)$ is elementary, while on $M_k(\FF)$ the grading is division.

\section{Actions on graded algebras}\label{ssAGrA}
In this paper we are interested in the actions of finite-dimensional pointed Hopf algebras with abelian group of group-likes on matrix algebras. We start with the actions of group algebras.

\subsection{Actions of group algebras}\label{ssAGA} Here we have: $H=\FF\cG$ is the group algebra of an abelian group $\cG$. Let  $\cA$ be a matrix algebra algebra and $u:H\to\cA$ a convolution invertible map making this action inner. In this case, the elements $u(g)$ can be explicitly written, as follows.

Let, as before, $\fG$ be the group of characters of $\cG=G(H)$. According to \cite{BK} (see also \cite[Corollary 2.43]{EK}, if we are given a  $\fG$-grading of $\cA=M_n(\FF)\cong\End_{\cD}(V_{\cD})$, then the matrix $u(g)$ of the action of for any $g\in\cG$, in an appropriate basis of $V_{\fD}$, can be written as follows. Let $\fT\subset \fG$ be the support of $\fD$ so that $\fD_\tau=\Span\,X_\tau$, for each $\tau \in\fT$. Suppose that $\sigma_1,\ld,\sigma_s$ are some elements of  $\fG$ which are pairwise non-congruent $\!\!\!\!\mod\fT$ and such that 
\[
V_\cD=(V_{\sigma_1}\ot\cD)\oplus\cdots\oplus (V_{\sigma_s}\ot\cD)
\]
is the decomposition of $V$ as a graded vector space over $\cD$ , and all elements of  $V_{\sigma_i}$ are of degree $\sigma_i$, $\dim V_{\sigma_i}=d_i$, $i=1,\ld,s$. Then the action of $g\in \cG$ can be given as the conjugation by the matrix
\begin{equation}\label{emixed-conjugation}
u(g)=(\sigma_1(g)I_{d_1}\ot X_{f(g)})\oplus\cdots\oplus(\sigma_s(h)I_{d_s}\ot X_{f(g)}),
\end{equation}
where $f(g)$ is a uniquely defined element of  $\cT$ such that 
\[
\tau(g)X_\tau=g\ast X_\tau=X_{f(g)}X_\tau X_{f(g)}^{-1}=\beta(f(g),\tau)X_\tau.
\]
We have $\beta(f(g),\tau)=\tau(g)$. Since $\beta$ is nonsingular, one easily checks that the map $f:g\mapsto f(g)$ is a well-defined homomorphism from $\cG$ to $\fT$.

The isomorphism classes of these actions are described by the same parameters as the isomorphism classes of corresponding gradings.

\subsection{Skew-primitive elements}\label{ssSPE} If $H$ is a pointed non-cosemisimple finite-dimensional Hopf algebra with an abelian group $\cG$ of group-likes then there are always elements $h,k\in\cG$, a character $\chi:\cG\to\FF^\times$ with $\chi(g)\ne 1$ and an element $x\not\in\FF\cG$ such that
\begin{gather}
gxg^{-1}=\chi(g)x\mbox{ for all }g\in\cG,\label{e000}\\
\Delta(x)=x\ot h+k\ot x.\label{e0000}
\end{gather}
For an $(h,k)$-primitive element $x$, one has 
\[
\ve(x)=0\mbox{ and }S(x)=-k^{-1}xh^{-1}. 
\]
If a Hopf algebra is generated by $\cG$ and $x$, as above, one can replace $x$ by a $(g,1)$ primitive element or $(1,g)$-primitive element, for a suitable $g\in \cG$.

Thus, when we study the actions of Hopf algebras, which are not group algebras, it is important to know the actions of the elements $x$, as above. We start with the effect of (\ref{e000}).

\begin{proposition}\label{p99} Let $\Gamma$ be a grading defined in \emph{(\ref{egr})}. Suppose $x$ is an element of $H$ such that there is a character $\chi: \cG\to \FF^\times$ satisfying $hx=\chi(h)xh$, for all $h\in\cG$. Let $\cK$ be the kernel of the action of $\cG$ on $\cA$, $\cK^\perp$ its orthogonal complement in $\fG$, $\fT$ the subgroup generated by $\supp\Gamma$ in $\fG$. Then the following are true.
\begin{enumerate}
\item[$\mathrm{(i)}$] The action of $x$ on $\cA$ maps any $\cA_\gamma$ to $\cA_{\chi\gamma}$.
\item[$\mathrm{(ii)}$]  If $\chi\not\in\cK^\perp$ then $x$ acts trivially on $\cA$.
\item[$\mathrm{(iii)}$]  If $\chi\not\in\fT$ then $x$ acts trivially on $\cA$.
\item[$\mathrm{(iv)}$]  If there is natural $m$ such that $(\supp\Gamma)^m=\{\ve\}$ and $\chi^m\ne \ve$ then $x$ acts trivially on $\cA$.
\end{enumerate}
\end{proposition}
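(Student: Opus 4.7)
The plan is to prove (i) by a one-line computation and then deduce (ii), (iii), (iv) as essentially book-keeping consequences of (i) combined with the observation that $\cA_\gamma = 0$ as soon as $\gamma \notin \supp\Gamma$.

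For (i), I would start from the identity $hx = \chi(h)xh$ in $H$ and apply it to $a \in \cA_\gamma$. Since $*$ is an action of the algebra $H$, for any $h \in \cG$ we have
\[
h*(x*a) = (hx)*a = \chi(h)\,(xh)*a = \chi(h)\,x*(h*a) = \chi(h)\gamma(h)\,(x*a),
\]
so $x*a \in \cA_{\chi\gamma}$.

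For (ii), the first observation is that $\supp\Gamma \subseteq \cK^\perp$: indeed, if $\gamma \in \supp\Gamma$ and $0 \ne a \in \cA_\gamma$, then for $g \in \cK$ we have $a = g*a = \gamma(g)a$, forcing $\gamma(g) = 1$. Now suppose $\chi \notin \cK^\perp$ and pick $g \in \cK$ with $\chi(g) \ne 1$. For $a \in \cA_\gamma$ with $\gamma \in \supp\Gamma$, part (i) gives $g*(x*a) = \chi(g)\gamma(g)(x*a) = \chi(g)(x*a)$. On the other hand $g \in \cK$ forces $g*(x*a) = x*a$. Hence $(\chi(g) - 1)(x*a) = 0$, so $x*a = 0$; extending by linearity from $\cA = \bigoplus_{\gamma \in \supp\Gamma} \cA_\gamma$ gives $x*a = 0$ for every $a \in \cA$.

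For (iii) and (iv), I would use (i) together with the fact that $\chi\gamma$ must lie in $\supp\Gamma$ in order for $x*a$ with $0 \ne a \in \cA_\gamma$ to be nonzero. In (iii), if $\gamma \in \supp\Gamma \subseteq \fT$ then $\gamma \in \fT$, so $\chi\gamma \in \fT$ would force $\chi \in \fT$, contradicting the hypothesis; thus $\chi\gamma \notin \fT \supseteq \supp\Gamma$, so $\cA_{\chi\gamma} = 0$ and $x*a = 0$. In (iv), for $\gamma \in \supp\Gamma$ we have $\gamma^m = \ve$ by hypothesis; if $\chi\gamma$ were in $\supp\Gamma$ as well, then $(\chi\gamma)^m = \ve$, and therefore $\chi^m = \ve$, contradicting the assumption $\chi^m \ne \ve$. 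Hence again $\cA_{\chi\gamma} = 0$ and $x*a = 0$; linearity finishes the argument. No serious obstacle is expected: the whole proposition is a mechanical consequence of (i), and the only mildly non-automatic point is the verification $\supp\Gamma \subseteq \cK^\perp$ used in (ii).
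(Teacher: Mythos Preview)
Your proof is correct and follows the paper's approach almost verbatim for (i) and (ii). For (iii) and (iv) there is a minor difference: the paper reduces (iii) to (ii) via the duality identification $\cK^\perp=\fT$ (implicit from $\cK=\fT^\perp$ under the standing assumption $|\fG|=|\cG|$) and then reduces (iv) to (iii), whereas you argue both directly from (i) by showing $\chi\gamma\notin\supp\Gamma$. Your route is slightly more self-contained since it does not invoke the perfect-duality fact $\fT^{\perp\perp}=\fT$, but the two arguments are otherwise equivalent in spirit and length.
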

\begin{proof}
\begin{enumerate}
\item[$\mathrm{(i)}$] Indeed, if $a\in \cA_\chi$ and $h\in\cG$ then 
\begin{eqnarray*}
 h\ast(x\ast a)&=&(hx)\ast a= \chi(h)(xh)\ast a= \chi(h)(x\ast (h\ast a)=\chi(h)\gamma(h)(x\ast a)\\&=&(\chi\gamma)(h)(x\ast a).
\end{eqnarray*}
It then follows that $x\ast a\in \cA_{\chi\gamma}$.
\item[$\mathrm{(ii)}$] Note that $\supp\Gamma\subset\cK^\perp$. Indeed, if $\vp\in\supp\Gamma$ then there is $0\ne a\in\cA_\vp$. Take any $h\in\cK$ then $\vp(h)a=ha=a$. It follows that $\vp(h)=1$, hence $\vp\in \cK^\perp$. 

To prove $xa=0$, for all $a\in\cA$, we write $a\in\cA$ is the sum of $a_\gamma\in\cA_\gamma$, $\gamma\in\supp\Gamma$. By the above, $\supp\Gamma\subset\cK^\perp$. If $\chi\not\in\cK^\perp$, then there is $h\in\cK$ such that $\chi(h)\ne 1$. We have $xa_\gamma\in\cA_{\chi\gamma}$, by $\mathrm{(i)}$. Now
$xa_gamma=h(xa)=(\chi\gamma)(h)(xa)$. But $(\chi\gamma)(h)=\chi(h)\gamma(h)=\chi(h)\ne 1$. Thus $xa=0$, as claimed.

\item[$\mathrm{(iii)}$] Follows easily from $\mathrm{(ii)}$, considering $\cK^\perp$ is a subgroup of $\fG$.

\item[$\mathrm{(iv)}$] In this case, also $\fT^m=\{ 1\}$ so that $\mathrm{(iii)}$ applies.

\end{enumerate}

\end{proof}

\subsection{Taft pairs and algebras}\label{ssTPA} 

 Now let $H$ contain a group-like element $g$ and a $(1,g)$-primitive element $x$ such that 
\begin{equation}
g^n=1, \: x^n=0,\: gx=\omega xg, \mbox{ where }\omega\mbox{ is a primitive }n{\mathrm{th}}\mbox{ root of }1.
\end{equation}  
We have
\begin{gather*}
\Delta(g) = g\ot g,\: \Delta(x)=x\ot 1+ g\ot x,\\ 
\ve(g)=1,\: \ve (x)=0, \:S(g)=g^{-1},\: S(x)=-g^{-1}x.
\end{gather*} 
One calls such a pair of elements a \textit{Taft $\omega$-pair}.  In this case, $g^sx=\chi(g^s)g^sx$, where $\chi(g^s)=\omega^s$. Let $\cG$ be the cyclic subgroup generated by $g$.

An $n^2$-dimensional Hopf algebra generated by an $\omega$-pair $g,x$ is called an $n^2$-dimensional \textit{Taft $\omega$-algebra}, denoted by $T_n(\omega)$, or simply $T_n$, if $\omega$ is fixed. 

Since in this case $\fG\cong\ZZ_n$, it follows from the above, that $\cA$ acquires a $\ZZ_n$-grading, as follows. Let $\chi\in\fG$ be given by $\chi(g)=\omega$. Set $\cA_k=\cA_{\chi^k}$. Then $\cA=\oplus_{k=0}^{n-1}\cA_k$ is the desired grading. 

\begin{corollary}\label{l0} 
If the action of $\cG$ is not faithful then the action of $x$ is trivial. If the support of the grading is contained in a proper subgroup $\fT$ of $\fG$ then the action of $x$ is trivial.
\end{corollary}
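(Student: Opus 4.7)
The plan is to derive both assertions as straightforward consequences of Proposition \ref{p99}, after pinning down the key fact that in the Taft setting the character $\chi$ defined by $gx = \chi(g)xg$ is a \emph{generator} of $\fG$. Indeed, since $g$ has order $n$ and $\chi(g)=\omega$ is a primitive $n$-th root of unity, $\chi$ has order $n = |\fG|$, so $\chi$ lies in no proper subgroup of $\fG \cong \ZZ_n$.

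For the first assertion, I would introduce $\cK \subseteq \cG$ as the kernel of the $\cG$-action on $\cA$. The non-faithfulness hypothesis means $\cK \ne \{1\}$. Because $\cG = \langle g\rangle$ is cyclic, its orthogonal $\cK^\perp \subseteq \fG$ is a proper subgroup of $\fG$. The observation above then gives $\chi \notin \cK^\perp$, and Proposition \ref{p99}(ii) immediately yields that $x$ acts trivially on $\cA$.

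For the second assertion, the hypothesis is that $\supp\Gamma \subseteq \fT$ with $\fT$ a proper subgroup of $\fG$. Again by the generator observation, $\chi \notin \fT$, and Proposition \ref{p99}(iii) applies directly, giving the triviality of the action of $x$.

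There is no real obstacle here; the only thing to check is the generator property of $\chi$, which is automatic from the definition of a Taft $\omega$-pair. The proof is therefore essentially a two-line specialization of Proposition \ref{p99} to the cyclic setting.
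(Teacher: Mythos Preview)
Your proposal is correct and follows essentially the same approach as the paper: both arguments rest on the observation that $\chi$ generates $\fG$ (since $\chi(g)=\omega$ is a primitive $n$th root of unity), and then invoke parts (ii) and (iii) of Proposition \ref{p99}. The paper's proof phrases the first assertion by picking a nontrivial element $g^k\in\cK$ and noting $\supp\Gamma\subset(g^k)^\perp$, but this is just a concrete way of saying $\cK^\perp$ is proper, exactly as you do.
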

\begin{proof}
Let $e\ne g^k$, $k\,|\, n$ act trivially on $\cA$. Then the support of the grading is contained in proper subgroup $\fT=(g^k)^\perp$ of $\fG$, whereas $\chi$ generates $\fG$. So we can use Proposition \ref{p99} (ii).
\end{proof}


\section{Inner actions on central algebras}\label{sACSA} 

An algebra $\cA$ with identity element $1$ is called central if its center coincides with $\FF.1$. Suppose that a Hopf algebra $H$ acts on $\cA$, and this action is inner via the convolution invertible map $u:H\to\cA$. Then, for any group-like element $g\in\cG$ and any $(h,k)$-primitive element $x$, $h,k\in\cG$, one has
\begin{equation}\label{e0}
g\ast a=u(g)au(g)^{-1},\;x\ast a=u(x)au(h)^{-1}-u(k)au(k)^{-1}u(x)u(h)^{-1}.
\end{equation}
In the case where $x$ is $(1,k)$-primitive, we would have
\begin{equation}\label{e2}
g\ast a=u(g)au(g)^{-1},\;x\ast a=u(x)a-u(k)au(k)^{-1}u(x).
\end{equation}
In the case where $x$ is $(h,1)$-primitive, we would have
\begin{equation}\label{e1}
g\ast a=u(g)au(g)^{-1},\;x\ast a=u(x)au(h)^{-1}-au(x)u(h)^{-1}=[u(x),a]u(h)^{-1}.
\end{equation}
Here $[a,b]=ab-ba$ is the usual commutator of $a,b\in \cA$.
\begin{lemma}\label{lchange_u} If $g,x$ act as in \emph{(\ref{e2})}, then setting $u'(x)=u(x)-\lambda u(g)$, $u'(g)=\mu u(g)$ does not  change the action. In the case of \emph{(\ref{e1})}, the action is preserved if we replace $u(x)$ by $u(x)-\lambda.I$ and $u(g)$ by $\mu u(g)$. In both cases,  $\lambda\in\FF$ and $\mu\in\FF\setminus\{ 0\}$,
\end{lemma}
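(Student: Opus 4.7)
The plan is to verify the lemma by direct substitution of $u'$ into the inner action formulas \eqref{e2} and \eqref{e1} and to check that the actions of both $g$ and $x$ on an arbitrary element $a\in\cA$ are unchanged. The proof is a short finite calculation; the content lies in isolating which terms cancel and using that the action of $g$ is by conjugation (hence insensitive to scalar rescaling of $u(g)$) and that scalar matrices are central in $\cA$.

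For the case of \eqref{e2}, I first observe that the conjugation defining the $g$-action is insensitive to the rescaling $u(g)\mapsto \mu u(g)$, since
\[
(\mu u(g))\, a\,(\mu u(g))^{-1} = u(g)\, a\, u(g)^{-1}.
\]
For the action of $x$, I substitute $u'(x)=u(x)-\lambda u(g)$ and $u'(g)=\mu u(g)$ into the defining formula $u'(x)a - u'(g)\, a\, u'(g)^{-1}\, u'(x)$. The $\mu$ cancels in the middle conjugation, and after expanding the brackets the two cross-terms involving $\lambda$ become
\[
-\lambda\, u(g)\, a\quad\text{and}\quad +\lambda\, u(g)\, a\, u(g)^{-1}\, u(g) = +\lambda\, u(g)\, a,
\]
which cancel. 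What survives is precisely $u(x)a - u(g)\, a\, u(g)^{-1}\, u(x) = x\ast a$, as required.

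For the case of \eqref{e1}, it is convenient to use the commutator form $x\ast a = [u(x),a]\,u(g)^{-1}$. The shift $u'(x) = u(x)-\lambda I$ leaves the commutator invariant because $[\lambda I, a]=0$ for every $a\in\cA$, so that term plays no role; meanwhile the rescaling $u(g)\mapsto \mu u(g)$ again does not affect the conjugation in the $g$-action. Combining these two observations yields the claim.

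I expect no serious obstacle: both verifications reduce to routine bookkeeping once one records that convolution-invertibility of $u$ on group-likes forces $u^{-1}(g)=u(g)^{-1}$ and that the scalar identity is central in $\cA$. The only mildly interesting ingredient is the cancellation of the two $\lambda$-terms in case \eqref{e2}, which is exactly what makes the shift by a multiple of $u(g)$ (rather than by an arbitrary element) the natural gauge freedom for a $(1,k)$-primitive element.
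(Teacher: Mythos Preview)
Your proof is correct and follows essentially the same approach as the paper: direct substitution of $u'$ into the formulas \eqref{e2} and \eqref{e1}, with the same cancellation of the two $\lambda$-terms in the first case and the observation $[\lambda I,a]=0$ in the second. The paper's treatment of case \eqref{e1} is in fact even terser (it simply says ``even simpler''), so your commutator explanation adds a little clarity without departing from the argument.
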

\begin{proof}
Indeed, if we denote the ``old'' action by $\ast$ and the ``new'' one by $\circ$, then 
\begin{eqnarray*}
g\circ a &=& u'(g)a u'(g)^{-1}=u(g)au(g)^{-1}=g\ast a\\
x\circ a&=&u'(x) a-u'(g) au'(g)^{-1}u'(x)\\&=&\left(u(x)-\lambda u(g)\right)a-u(g)au(g)^{-1}\left(u(x)-\lambda u(g)\right)\\&=&u(x)a-u(g)au(g)^{-1}u(x)=x\ast a,
\end{eqnarray*}
as claimed.

The case of (\ref{e1}) is even simpler.
\end{proof} 

A much more general isomorphism result about the actions of pointed Hopf algebras is the following.

\begin{theorem}\label{liso_inn} Let $H$ be a finite-dimensional pointed Hopf algebra with abelian group of group-likes, $\cA$ a matrix algebra. If two inner actions $\ast$ and $\circ$ defined by the functions $u:H\to \cA$ and $v:H\to \cA$ are isomorphic then there exists an invertible element $C\in \cA$ and $\lambda(g),\mu(g,x)\in\FF$,  such that $u(g)=\lambda(g) Cv(g)C^{-1}$, where $\lambda(g)\in\FF$, for any grouplike $g\in H$ and $u(x)=Cv(x)C^{-1}+\mu(g,x)u(g)$, $\mu(g,x)\in\FF$, for any $(1,g)$-primitive element $x\in H$. In the case of a $(g,1)$-primitive element $y$, we must have $u(y)=Cv(y)C^{-1}+\mu(g,y) I$.
\end{theorem}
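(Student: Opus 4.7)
The plan is to reduce the statement to three separate identities, one for each kind of algebra generator of $H$, and in each case to exploit the centrality of the matrix algebra $\cA$. Since $H$ is a finite-dimensional pointed Hopf algebra with abelian group of group-likes, it is generated, as an algebra, by its group-like elements together with skew-primitive elements that may be assumed $(1,g)$- or $(g,1)$-primitive for suitable $g\in\cG$ (cf.\ \cite{AGI} and Section~\ref{ssSPE}); checking the claimed relations on these generators is therefore sufficient. The starting point is Skolem--Noether: since $\cA$ is a matrix algebra, the intertwining automorphism $\vp$ is inner, so I fix an invertible $C\in\cA$ with $\vp(a)=C^{-1}aC$.

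For a group-like $g$, the identity $g\circ\vp(a)=\vp(g\ast a)$ rearranges, after conjugating by $C$, to $(Cv(g)C^{-1})\,a\,(Cv(g)C^{-1})^{-1}=u(g)\,a\,u(g)^{-1}$ for every $a\in\cA$. Hence $u(g)^{-1}\cdot Cv(g)C^{-1}$ lies in the centralizer of $\cA$ in $\cA$, which by centrality equals $\FF\cdot I$; thus $u(g)=\lambda(g)\,Cv(g)C^{-1}$ for some $\lambda(g)\in\FF^\times$.

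For a $(1,g)$-primitive element $x$ I substitute \eqref{e2} into $x\circ\vp(a)=\vp(x\ast a)$ and conjugate by $C$. Using the group-like relation just obtained to identify $Cv(g)C^{-1}\cdot a\cdot Cv(g)^{-1}C^{-1}$ with $u(g)\,a\,u(g)^{-1}$ (the scalars $\lambda(g)$ and $\lambda(g)^{-1}$ cancel), the equation collapses to
\[
\bigl(Cv(x)C^{-1}-u(x)\bigr)\,a \;=\; u(g)\,a\,u(g)^{-1}\,\bigl(Cv(x)C^{-1}-u(x)\bigr), \qquad a\in\cA.
\]
Equivalently, $u(g)^{-1}\bigl(Cv(x)C^{-1}-u(x)\bigr)$ commutes with all of $\cA$ and is therefore a scalar; this yields $u(x)=Cv(x)C^{-1}+\mu(g,x)\,u(g)$ with $\mu(g,x)\in\FF$.

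The $(g,1)$-primitive case follows by the same three-step mechanism applied to \eqref{e1}. After conjugation by $C$ and simplification via the group-like relation, the intertwining identity reduces to $[Cv(y)C^{-1}-u(y),\,a]=0$ for every $a\in\cA$, any stray scalar produced by $\lambda(g)$ being absorbed by rescaling $v(g)$ and $v(y)$ compatibly in the manner of Lemma~\ref{lchange_u} (such a rescaling preserves the action $\circ$). Centrality of $\cA$ then forces $u(y)-Cv(y)C^{-1}\in\FF\cdot I$, giving the stated relation $u(y)=Cv(y)C^{-1}+\mu(g,y)\,I$. The only delicate point in the proof is the scalar bookkeeping in this last step; otherwise everything is driven mechanically by Skolem--Noether and the triviality of $Z(\cA)$.
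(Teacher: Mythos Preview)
Your argument is correct and follows essentially the same route as the paper's own proof: Skolem--Noether to realize $\vp$ as inner conjugation, then the centrality of $\cA$ to extract the scalars $\lambda(g)$ and $\mu(g,x)$ for group-likes and $(1,g)$-primitives. Your treatment of the $(g,1)$-primitive case is in fact more explicit than the paper's, which states the conclusion but omits the calculation; you correctly observe that a factor of $\lambda(g)$ appears in front of $Cv(y)C^{-1}$ and can be absorbed by the rescaling freedom of Lemma~\ref{lchange_u}, since only the actions (not the particular maps $u,v$) are fixed by hypothesis.
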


\begin{proof}
If $g\in G(H)$, then $\vp:\cA\to \cA$ is an isomorphism of actions, then $g\ast \vp(A)=\vp(g\circ A)$, for any $A\in\cA$. Since $\cA$ is central simple, there is invertible $C\in \cA$ such that $\vp(A)=CAC^{-1}$. So we have $u(g)CAC^{-1}u(g)^{-1}=Cv(g)Av(g)^{-1}C^{-1}$, for all $A\in \cA$. As a result, $v(g)^{-1}C^{-1}u(g)C^{-1}$ is a central element of $\cA$. Since $\cA$ is central, $v(g)^{-1}C^{-1}u(g)C^{-1}=\lambda(g)\in\FF$ and $u(g)=\lambda(g) Cv(g)C^{-1}$, as claimed. We can also write  $u(g)C=\lambda(g)Cv(g)$ and $v(g)^{-1}C^{-1}=\lambda(g)C^{-1}u(g)^{-1}$.

Now let $x$ be a $(1,g)$-primitive element of $H$ and suppose $x\ast \vp(A)=\vp(x\circ A)$. Then
\[
u(x)CAC^{-1}-u(g)CAC^{-1}(g)^{-1}u(x)=C(v(x)A-v(g)Av(g)^{-1}v(x))C^{-1}.
\]
It follows that 
\begin{eqnarray*}
&&u(x)CAC^{-1}-Cv(x)AC^{-1}=u(g)CAuC^{-1}(g)^{-1}u(x)-v(g)Av(g)^{-1}v(x))C^{-1}\\
&&(u(x)C-Cv(x))A= Cv(g)A(\lambda C^{-1}u(g)^{-1}u(x)C-v(g)^{-1}v(x))\\
&&(v(g)^{-1}C^{-1}u(x)C-v(g)^{-1}v(x))A=A(\lambda C^{-1}u(g)^{-1}u(x)-v(g)^{-1}v(x))\\
&&(v(g)^{-1}C^{-1}u(x)C-v(g)^{-1}v(x))A=A(v(g)^{-1}C^{-1}u(x)C-v(g)^{-1}v(x)).
\end{eqnarray*}
As a result, $v(g)^{-1}C^{-1}u(x)C-v(g)^{-1}v(x)$ is a central element in $\cA$, and so there exists $\mu(g,x)\in\C$ such that $u(x)=Cv(x)C^{-1}+\mu(g)u(g)$, as claimed.
\end{proof}

Conversely, suppose our conditions hold and $H$ is generated by its group-likes and skew-primitives. Then 
\[
v(g)^{-1}C^{-1}u(g)C^{-1}\mbox{ and }v(g)^{-1}C^{-1}u(x)C-v(g)^{-1}v(x)
\]
are central elements. We can read our calculations in the reverse order, to find that $\vp(A)=CAC^{-1}$ preserves the action of group-like and skew-primitive elements. Since $H$ is generated by these elements, we find that $\vp$ is an isomorphism of actions $\ast$ and $\circ$.

\section{Pointed Hopf algebras}\label{PHARO}
Pointed Hopf algebras with abelian coradical have been classified thanks to efforts of many authors, specially N. Andruskiewitchsh, H.-J. Schneider, I. Heckenberger and  I. Angiono. A useful survey of the development in the area is \cite{AGI}. In this paper we will be looking at actions of a class oh pointed Hopf algebras in the paper \cite{AS}. 

Let $\cG$ be a finite nontrivial abelian group. A \textit{datum} for a\textit{ quantum linear space} $\cR=\cR(a_1, \ld, a_n;\chi_1, \ld , \chi_n)$ consists of the elements
$a_1, \ld, a_n\in \cG$, $\chi_1, \ld , \chi_n\in\fG=\wh{\cG}$, such that  the following
conditions are satisfied:
\begin{gather}
q_j=\chi_j(a_j)\ne 1\label{ejj}\\
\chi_j(a_i)\chi_i(a_j)=1\mbox{ for all }1\le i\ne j\le n.\label{eji}
\end{gather}
One says that the datum, or its associated quantum linear space, has rank $n$. If $N_j=o(q_j)$, the order of $q_i$ in $\cG$, then $\cR=\cR(a_1, \ld, a_n;\chi_1, \ld , \chi_n)$ is an algebra with the following presentation in terms of generators and defining relations:
\begin{equation}
\cR=\langle x_1,\ld,x_n\:|\: x_i^{N_i}=0;\:x_ix_j=\chi_j(a_i)x_jx_i\mbox{ for all }1\le i\ne j\le n\rangle.
\end{equation}

Let us fix a decomposition $\cG=(g_1)\times\cdots\times(g_k)$ and denote by $M_\ell$ the order of $g_\ell$, $1\le\ell\le k$. Let $a_1, \ld, a_n\in \cG$, $\chi_1, \ld , \chi_n\in\fG$ be the datum for a quantum linear space $\cR=\cR(a_1, \ld, a_n;\chi_1, \ld , \chi_n)$, that is, (\ref{ejj},\,\ref{eji}) hold. 

A compatible datum $D$ for $\cG$ and $\cR$ consists of a vector $(\mu_1,\ld,\mu_n)$, $\mu_i\in\{ 0,1\}$ and a matrix $(\chi_{ij})$, $\lambda_{ij}\in\FF$, $1\le i,j\le n$, such that
\begin{enumerate}
\item[(i)]  if $a_i^{N_i}=1$ or $\chi_i^{N_i}\ne \ve$, then $\mu_i=0$,
\item[(ii)]    if  $a_ia_j=1$ or $\chi_i\chi_j\ne \ve$, then $\lambda_{ij}= 0$.
\end{enumerate}

\begin{definition}\label{dPHA}
Let $\cG$ be a finite abelian group, $\cR$ a quantum linear
space, and $D$ a compatible datum.  Keep the notation above. Let $\cP(\cG, \cR, D)$ be the algebra presented by generators $g_\ell$,  $1\le\ell\le k$ , and $x_i$, $1\le i\le n$, with defining relations
\begin{gather}
g_\ell^{M_\ell}=1,\:1\le\ell\le k;\label{e5.3}\\
g_\ell g_t = g_t g_\ell,\: 1\le \ell, t\le k; \label{e5.4}\\
 x_ig_\ell=\chi_i(g_\ell) g_\ell x_i,\: ,\: 1\le\ell\le k; \label{e5.5}\\
x_i^{N_i}=\mu_i(1-a_i^{N_i}), 1\le i\le n; \label{e5.6}\\
x_jx_i =\chi_i(a_j)x_ix_j +\lambda_{ij}(1-a_ia_j), 1\le i,j\le n\label{e5.7}.
\end{gather}
A unique Hopf algebra structure on $\cP(\cG, \cR, D)$ is  given by
\begin{gather*}
\Delta(g_\ell)=g_\ell\otimes g_\ell,\: \Delta(x_i)=x_i\ot 1+a_i\ot x_i,\:1\le\ell\le k,\:1\le i,j\le n;\\
S(g_\ell)=g_\ell^{-1},\: S(x_i)=-a_i^{-1}x_i,\: 1\le\ell\le k,\:1\le i\le n.
\end{gather*}
\end{definition}

One can also say that $H$ is generated by its group $\cG=G(H)$ of group-likes, which is abelian, and the set of $(1,a_i)$-primitive $x_i$, for some element $a_i\in\cG$, $i=1,\ld,n$. These elements satisfy the above relations (\ref{e5.5}, \ref{e5.6}, \ref{e5.7}). In these relations, $\chi_1,\ld,\chi_n$ are multiplicative characters $\chi_i:\cG\to\FF^\times$, $N_i=o(\chi_i(a_i))\ne 1$. If one denotes $q_i=\chi_i(a_i)$, then $N_i$ is the order of $q_i$ in the group $\FF^\times$. The elements $\mu_i$ and $\lambda_{ij}$ are subject to the compatibility conditions (i), (ii), just before Definition \ref{dPHA}.  

Finally, the number $n$ of generators of the quantum vector space $\cR$ is called the \textit{rank} of the Hopf algebra $\cP(\cG,\cR,D)$.

\begin{remark}\label{rKern}
Let $\cP(\cG,\cR,D)$ act on an algebra $\cA$ so that, for some $i=1,\ld,n$, the action of $x_i$ is nonzero. If $g$ acts trivially on $\cA$ then $\chi_i(g)=1$. If $a_i$ acts trivially then $x_i$ acts trivially, too. 
\end{remark}

This follows by Proposition \ref{p99} and by (\ref{ejj}).

\subsection{Inner actions of pointed Hopf algebras $\cP(\cG, \cR, D)$}\label{ssIAHHARO} In this section we study the actions of pointed Hopf algebras of rank one on matrix algebras. We set $H=\cP(\cG, \cR, D)$, as above.

\begin{theorem}\label{pCR} Let $u:H\to \cA$ define an inner action of a Hopf algebra $H$ on a matrix algebra $\cA$. Then for each $i=1,\ld,k$, there is a function $\lambda_i: \cG\to \FF$ such that, for any $g\in \cG$ one has 
\begin{equation}\label{e0001}
u(g)u(x_i)u(g)^{-1}=\chi_i(g)u(x_i)+\lambda_i(g)u(a_i).
\end{equation}
One can choose $u$ so that $\lambda(g)=0$, that is, 
\begin{equation}
u(g)u(x_i)=\chi_i(g) u(x_i)u(g),\mbox{ for all }g\in\cG,
\end{equation}
the action remaining the same. 
\end{theorem}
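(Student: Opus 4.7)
The plan is to translate the commutation relation between $g$ and $x_i$ inside $H$ into an identity for the inner action on $\cA$, and then to refine the resulting formula using the $\fG$-grading (\ref{egr}) of $\cA$ induced by $\cG$.

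For the first assertion, apply the relation $g x_i g^{-1} = \chi_i(g) x_i$ (cf.\ (\ref{e000}), (\ref{e5.5})) to an arbitrary $a \in \cA$ through the module structure to obtain $g \ast (x_i \ast a) = \chi_i(g)\, x_i \ast (g \ast a)$. Expanding both sides with $g \ast (-) = u(g)(-)u(g)^{-1}$ and the inner formula (\ref{e2}) for the $(1,a_i)$-primitive $x_i$, and changing variables to $b = u(g) a u(g)^{-1}$, a short manipulation yields
\[
w(g)\, b \;=\; u(a_i)\, b\, u(a_i)^{-1}\, w(g) \qquad \text{for all } b \in \cA,
\]
where $w(g) := u(g)u(x_i)u(g)^{-1} - \chi_i(g)\,u(x_i)$. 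Hence $u(a_i)^{-1} w(g)$ centralises $\cA$, and by centrality it is a scalar $\lambda_i(g) \in \FF$, which is precisely (\ref{e0001}).

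For the second assertion, I first apply the same argument to the trivial commutation $g a_i g^{-1} = a_i$ in the abelian group $\cG$: it yields $u(g) u(a_i) u(g)^{-1} = \eta_i(g)\,u(a_i)$ for some scalar $\eta_i(g) \in \FF^\times$, and $g \mapsto \eta_i(g)$ is clearly a character of $\cG$, so $u(a_i)$ is a homogeneous element of $\cA$ in (\ref{egr}), of some degree $\eta_i \in \fG$. Now decompose $u(x_i) = \sum_{\phi \in \fG} v_\phi$ into homogeneous pieces $v_\phi \in \cA_\phi$ and substitute into (\ref{e0001}). Matching components forces $(\phi(g) - \chi_i(g)) v_\phi = 0$ for every $g$ and every $\phi \ne \eta_i$, so $v_\phi = 0$ unless $\phi \in \{\chi_i, \eta_i\}$; the $\eta_i$-component then satisfies $(\eta_i(g) - \chi_i(g)) v_{\eta_i} = \lambda_i(g)\,u(a_i)$. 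If $\eta_i \ne \chi_i$, choosing $g$ with $\eta_i(g) \ne \chi_i(g)$ forces $v_{\eta_i} = \kappa\, u(a_i)$ for a scalar $\kappa \in \FF$; replacing $u(x_i)$ by $u'(x_i) := u(x_i) - \kappa\, u(a_i)$ preserves the action by Lemma \ref{lchange_u}, while $u'(x_i) = v_{\chi_i}$ is now homogeneous of degree $\chi_i$, so $u(g) u'(x_i) u(g)^{-1} = \chi_i(g)\, u'(x_i)$ and the new $\lambda'_i(g)$ vanishes identically. The case $\eta_i = \chi_i$ is automatic: the same component analysis already gives $u(x_i) \in \cA_{\chi_i}$ and $\lambda_i \equiv 0$. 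Performing this normalisation independently for each $i$ (changes to different $u(x_i)$'s do not interact) yields the desired $u$.

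The main obstacle is the bookkeeping needed to pass from the commutation in $H$ to the clean identity for $w(g)$, and to confirm that $u(a_i)$ is homogeneous in (\ref{egr}) so that the decomposition $u(x_i) = \sum v_\phi$ interacts with (\ref{e0001}) degree by degree; once those are pinned down, the centrality of $\cA$ does all the remaining work.
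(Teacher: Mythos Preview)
Your argument is correct. The first assertion is handled essentially as in the paper: both compute $g\ast(x_i\ast a)=\chi_i(g)\,x_i\ast(g\ast a)$ via the inner formulas and conclude by centrality. One small point: your ``short manipulation'' tacitly uses that $u(g)u(a_i)u(g)^{-1}$ is a scalar multiple of $u(a_i)$, which you only prove in the next paragraph; the paper records this at the outset, and you should too.

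For the second assertion your route is genuinely different. The paper never invokes the $\fG$-grading here: instead it derives a cocycle-type identity $\lambda_i(g_1g_2)=\lambda_i(g_1)\chi_i(g_2)+\lambda_i(g_2)\xi(g_1,a_i)$, specialises to $g_2=a_i$ to obtain $\lambda_i(g)(1-\chi_i(a_i))=\lambda_i(a_i)(\xi(g,a_i)-\chi_i(g))$, and uses $\chi_i(a_i)\neq 1$ to see that killing $\lambda_i(a_i)$ by the shift $u(x_i)\mapsto u(x_i)+\frac{\lambda_i(a_i)}{1-\chi_i(a_i)}u(a_i)$ kills every $\lambda_i(g)$. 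Your approach instead observes that $u(a_i)\in\cA_{\eta_i}$, decomposes $u(x_i)$ into $\fG$-homogeneous pieces, and reads off directly from (\ref{e0001}) that only the $\chi_i$- and $\eta_i$-components can survive, the latter being a scalar multiple of $u(a_i)$. This is cleaner and more structural; it also makes the content of Corollary~\ref{tchi} immediate. The paper's computation, on the other hand, yields the explicit functional equation for $\lambda_i$, which is of some independent interest and does not rely on the standing hypothesis $|\fG|=|\cG|$ needed for the grading decomposition.
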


\begin{proof}
Note that since $\cG$ is abelian, we have $a_i*(g* A)=g*(a_i*A)$, for any $g\in \cG$, $A\in \cA$. Thus 
\[
u(g)u(a_i)Xu(a_i)^{-1}u(g)^{-1}=u(a_i)u(g)Xu(g)^{-1}u(a_i)^{-1}.
\]
Now let us compute $g*(x_i*A)=\chi_i(g)x_i*(g*A)$ in terms of the inner action via the map $u:H\to \cA$. We have
\begin{eqnarray}
g*(x_i*A)&=&u(g)(u(x_i)A-u(a_i)Au(a_i)^{-1}u(x_i))u(g)^{-1}\nonumber\\
&=&u(g)u(x_i)Au(g)^{-1}-u(g)u(a_i)Au(a_i)^{-1}u(x_i)u(g)^{-1}\nonumber\\
&=&u(g)u(x_i)Au(g)^{-1}-u(a_i)u(g)Au(g)^{-1}u(a_i)^{-1}u(g)u(x_i)u(g)^{-1}\label{e001}
\end{eqnarray}
\begin{eqnarray}
\chi_i(g)x_i*(g*A)&=&\chi_i(g)x_i*(u(g)Au(g)^{-1})\nonumber\\
&=&\chi_i(g)u(x_i)u(g)Au(g)^{-1}-\chi_i(g)u(a_i)u(g)Au(g)^{-1}u(a)^{-1}u(x_i)\label{e002}
\end{eqnarray}
If we subtract  (\ref{e002}) from (\ref{e001}), we obtain
\begin{eqnarray*}
0&=&(u(g)u(x_i)-\chi_i(g)u(x_i)u(g))Au(g)^{-1}\\
&-&u(a_i)u(g)A(u(g)^{-1}u(a_i)^{-1}u(g)u(x_i)- \chi(g)u(g)^{-1}u(a_i)^{-1}u(x_i)u(g))u(g)^{-1}.
\end{eqnarray*}
Multiplying on the left by $u(g)^{-1}u(a_i)^{-1}$ and on the right by $u(g)$, we obtain
\begin{eqnarray*}
&&(u(g)^{-1}u(a_i)^{-1}u(g)u(x_i)-\chi_i(g)u(g)^{-1}u(a_i)^{-1}u(x_i)u(g))A\\
&=&A(u(g)^{-1}u(a_i)^{-1}u(g)u(x_i)-\chi(g)u(g)^{-1}u(a_i)^{-1}u(x_i)u(g)).
\end{eqnarray*}

Since $A$ is arbitrary and $\cA$ central simple there is $\lambda_i(g)\in\FF$ such that
\begin{eqnarray*}
u(g)^{-1}u(a_i)^{-1}u(g)u(x_i)&=&\chi(g)u(g)^{-1}u(a_i)^{-1}u(x_i)u(g))+\lambda_i(g) 1_\cA,\\
u(g)u(x_i)u(g)^{-1}&=&\chi_i(g)u(x_i)+\lambda_i(g)u(a_i), 
\end{eqnarray*}
proving (\ref{e0001}). 

If $g_1,g_2\in \cG$, then
\begin{gather}
u(g_2)u(x_i)-\chi_i(g_2)u(x_i)u(g_2)=\lambda_i(g_2)u(a_i)u(g_2)\label{e555}\\
u(g_1)u(g_2)u(x_i)-\chi_i(g_2)u(g_1)u(x_i)u(g_2)=\lambda_i(g_2)u(g_1)u(a_i)u(g_2)\nonumber
\end{gather}
Since $u(g_1g_2)$ is a scalar multiple of $u(g_1)u(g_2)$, we can write
\begin{gather*}
u(g_1g_2)u(x_i)-\chi_i(g_1g_2)u(x_i)u(g_1g_2)=\lambda_i(g_1g_2)u(a_i)u(g_1g_2)\\
u(g_1)u(g_2)u(x_i)-\chi_i(g_1g_2)u(x_i)u(g_1)u(g_2)=\lambda_i(g_2)u(a_i)u(g_1)u(g_2)
\end{gather*}
It follows that
\begin{gather}
\chi_i(g_2)u(g_1)u(x_i)u(g_2)+\lambda_i(g_2)u(g_1)u(a_i)u(g_2)\nonumber\\
=\chi_i(g_1)\chi_i(g_2)u(x_i)u(g_1)u(g_2)+\lambda_i(g_1g_2)u(a_i)u(g_1)u(g_2)\label{e5555}
\end{gather}
Cancelling out $u(g_2)$ and substituting (\ref{e555}), where $g_2$ is replaced by $g_1$, to (\ref{e5555}), we obtain
\begin{gather*}
\chi_i(g_2)\chi_i(g_1)u(x_i)u(g_1)+\chi_i(g_2)\lambda_i(g_1)u(a)u(g_1)+\lambda_i(g_2)u(g_1)u(a_i)\\
=\chi_i(g_1)\chi_i(g_2)u(x_i)u(g_1)+\lambda_i(g_1g_2)u(a)u(g_1).
\end{gather*}
It follows that
\[
\chi_i(g_2)\lambda_i(g_1)u(a_i)u(g_1)+\lambda_i(g_2)u(g_1)u(a_i)=\lambda_i(g_1g_2)u(a_i)u(g_1).
\]
If we set $u(g)u(a_i)=\xi(g,a_i)u(a_i)u(g)$, for $\xi(g,a_i)\in\FF$, then we get
\begin{gather*}
\lambda_i(g_1g_2)=\lambda_i(g_1)\chi_i(g_2)+\lambda_i(g_2)\xi(g_1,a_i)\\
\lambda(g_2g_1)=\lambda(g_2)\chi_i(g_1)+\lambda_i(g_1)\xi(g_2,a_i)\\
\lambda(g_1)(\xi(g_2,a_i)-\chi_i(g_2))=\lambda(g_2)(\xi(g_1,a_i)-\chi(g_1))
\end{gather*}
If $g_1=g$, $g_2=a_i$ then
\begin{equation}\label{e55555}
\lambda_i(g)(1-\chi_i(a_i))=\lambda(a_i)(\xi(g,a_i)-\chi_i(g)).
\end{equation}
Since $\chi_i(a_i)\ne 1$, it follows that once $\lambda_i(a)=0$, all $\lambda_i(g)=0$. Now if we replace $u(x_i)$ by $\bar{u}(x_i)=u(x_i)+\frac{\lambda(a_i)}{1-\chi_i(a_i)}$, then we will have $u(a_i)\bar{u}(x_i)=\chi_i(a_i)\bar{u}(x_i)u(a_i)$. Our calculation shows that then also $u(g)\bar{u}(x_i)-\chi_i(g)\bar{u}(x_i)u(g)$,  for all $g\in\cG$, as claimed.
\end{proof}

An important consequence of this theorem is the following.

\begin{corollary}\label{tchi}
Let a Hopf algebra $H$ with a group of group-likes $\cG$ and $\fG=\wh{\cG}$ act on a matrix algebra $\cA$. Let 
\[
\Gamma: \cA=\bigoplus_{\vp\in\fG}\cA_\vp
\]
be the associated $\fG$ grading of $\cA$. If $x$ is a $(1,a)$-primitive element in $H$ for some $a\in\cG$ and $\chi\in\fG$ is such that $gx=\chi(g)x$, for all $g\in\cG$. Then $u(x)\in\cA_\chi
$. 
\end{corollary}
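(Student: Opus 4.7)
The plan is to obtain Corollary~\ref{tchi} as a direct consequence of Theorem~\ref{pCR}. Recall from (\ref{e0}) that for a group-like element $g$, the inner action on any $a\in\cA$ is simply conjugation: $g\ast a=u(g)au(g)^{-1}$. So, to place $u(x)$ in the homogeneous component $\cA_\chi$, what must be checked is that
\[
u(g)\,u(x)\,u(g)^{-1}=\chi(g)u(x)\qquad\text{for every }g\in\cG.
\]

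First I would invoke Theorem~\ref{pCR}. Its proof uses nothing about $x$ beyond the facts that $x$ is $(1,a)$-primitive and that $gx=\chi(g)xg$ for every $g\in\cG$, so it applies verbatim to the element $x$ of the present corollary. That theorem produces a function $\lambda:\cG\to\FF$ with
\[
u(g)u(x)u(g)^{-1}=\chi(g)u(x)+\lambda(g)u(a),
\]
and, crucially, asserts that one may renormalize $u$ (replacing $u(x)$ by $u(x)+\tfrac{\lambda(a)}{1-\chi(a)}\,1$, using that $\chi(a)\ne 1$) so that $\lambda\equiv 0$, without altering the action. With this choice, the displayed commutation relation becomes
\[
u(g)u(x)=\chi(g)u(x)u(g)\qquad\text{for all }g\in\cG.
\]

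Combining the two steps, $g\ast u(x)=u(g)u(x)u(g)^{-1}=\chi(g)u(x)$, which by the definition of $\cA_\chi$ is exactly the statement that $u(x)$ is homogeneous of degree $\chi$. There is no real obstacle here; the only small point worth mentioning is the remark above that Theorem~\ref{pCR}, though stated for the generators $x_i$ of $\cP(\cG,\cR,D)$, depends only on the skew-primitivity and the commutation with $\cG$, and therefore applies to every $(1,a)$-primitive $x$ of the required type. Once the normalization of $u$ provided by that theorem is in place, the corollary is immediate.
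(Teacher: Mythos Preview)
Your proof is correct and follows exactly the route the paper intends: the corollary is stated immediately after Theorem~\ref{pCR} with the remark that it is ``an important consequence of this theorem,'' and no further argument is given. Your observation that the proof of Theorem~\ref{pCR} uses only the $(1,a)$-primitivity of $x$ and the relation $gx=\chi(g)xg$ is exactly the point that makes the deduction legitimate.

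One small slip in your parenthetical: the renormalization that kills $\lambda$ replaces $u(x)$ by $u(x)+c\,u(a)$ for a suitable scalar $c$, not by $u(x)+c\cdot 1$; adding a scalar multiple of the identity would not remove the $\lambda(a)u(a)$ term from $u(a)u(x)u(a)^{-1}=\chi(a)u(x)+\lambda(a)u(a)$. This does not affect your argument, since you are merely invoking the conclusion of Theorem~\ref{pCR} rather than reproving it, but it is worth getting right.
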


Let $H=\cP(\cG, \cR, D)$, as above.

\begin{theorem}\label{lSC} Let $u:H\to \cA$ define an inner action of a Hopf algebra $H$ on a matrix algebra $\cA$. Then there is $\sigma_i\in\FF$ such that the following relation holds for the elements $u(a_i)$ and $u(x_i)$ of the algebra $\cA$:
\begin{equation}\label{euxn}
u(x_i)^{N_i}=\mu_i.1_\cA+\sigma_i u(a_i)^{N_i}.
\end{equation}
\end{theorem}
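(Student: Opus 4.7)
The plan is to compute the operator $T\colon a\mapsto x_i\ast a$ on $\cA$ and take its $N_i$-th power in two different ways. Set $X=u(x_i)$, $G=u(a_i)$, and $q_i=\chi_i(a_i)$. By Theorem~\ref{pCR}, after the normalization established there we may assume $GX=q_iXG$; in particular $X^{N_i}$ commutes with $G$ (hence with $G^{\pm N_i}$). The formula for the inner action of a $(1,a_i)$-primitive element recalled in Section~\ref{sACSA} gives $T(a)=Xa-GaG^{-1}X$, which we split as $T=A-B$ with $A=L_X$ (left multiplication by $X$) and $B=R_X\circ\phi$, where $\phi(a)=GaG^{-1}$.

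A direct computation using $GX=q_iXG$ yields the $q$-commutation $BA=q_iAB$. Since $q_i$ is a primitive $N_i$-th root of unity, the quantum Fermat identity $\binom{N_i}{k}_{q_i}=0$ for $1\le k\le N_i-1$ collapses the $q_i$-binomial expansion of $(A-B)^{N_i}$ to $A^{N_i}+(-1)^{N_i}B^{N_i}$. An easy induction on $n$, exploiting $\phi R_X=q_iR_X\phi$, yields $B^n=q_i^{n(n-1)/2}R_X^n\phi^n$, and a parity check on $N_i$ confirms the sign identity $(-1)^{N_i}q_i^{N_i(N_i-1)/2}=-1$. Consequently,
\[ T^{N_i}(a)=X^{N_i}a-G^{N_i}aG^{-N_i}X^{N_i}. \]

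On the other hand, the $H$-module axiom gives $x_i^{N_i}\ast a=T^{N_i}(a)$, while the relation $x_i^{N_i}=\mu_i(1-a_i^{N_i})$ in $H$ together with the fact that $a_i^{N_i}$ acts on the central simple algebra $\cA$ as conjugation by $G^{N_i}$ (any other implementer differs by a scalar) gives $x_i^{N_i}\ast a=\mu_i a-\mu_i G^{N_i}aG^{-N_i}$. Equating the two expressions and using that $X^{N_i}$ commutes with $G^{\pm N_i}$, one rearranges to
\[ (X^{N_i}-\mu_i)\,a \;=\; G^{N_i}\,a\,(X^{N_i}-\mu_i)\,G^{-N_i} \quad\text{for all } a\in\cA. \]
Multiplying both sides on the left by $G^{-N_i}$ (which commutes with $X^{N_i}-\mu_i$) shows that $(X^{N_i}-\mu_i)G^{-N_i}$ commutes with every element of $\cA$; since $\cA$ is central simple, this element is a scalar $\sigma_i\in\FF$, giving $u(x_i)^{N_i}=\mu_i\cdot 1_{\cA}+\sigma_i\,u(a_i)^{N_i}$.

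The main obstacle is the careful bookkeeping in the $q_i$-binomial expansion of $T^{N_i}$ — in particular verifying the sign identity $(-1)^{N_i}q_i^{N_i(N_i-1)/2}=-1$ for both parities of $N_i$ — since this is precisely what makes the two evaluations of $x_i^{N_i}\ast a$ match cleanly, after which the central simple algebra argument (of the same flavour as the one at the end of Theorem~\ref{liso_inn}) closes the proof.
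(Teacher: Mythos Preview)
Your proof is correct and follows essentially the same route as the paper's: both express $a\mapsto x_i\ast a$ as a difference of two operators that $q_i$-commute (the paper writes $\varphi=L_{u(x_i)}$ and $\psi=-L_{u(a_i)}R_{u(a_i)^{-1}}R_{u(x_i)}$, which is exactly your $-B$), apply the $q$-binomial collapse using the sign identity $(-1)^{N_i}q_i^{N_i(N_i-1)/2}=-1$, equate with the action of $\mu_i(1-a_i^{N_i})$, and finish with the centrality argument. Your explicit invocation of Theorem~\ref{pCR} at the outset to secure $GX=q_iXG$ is slightly cleaner than the paper's treatment, but the argument is the same.
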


\begin{proof} This is an application of the $q$-binomial formula \cite[Lemma 3]{Rad}. Let $L_y,R_z:\cA\to \cA$ denote the left, respectively, right multiplications by $y$, respectively $z$, in the algebra $\cA$. Then we can rewrite the action of $x$ on any $A\in \cA$ in  Equation (\ref{e2}) as 
\[
x_i\ast A=(L_{u(x_i)}-L_{u(a_i)}R_{u(a_i)^{-1}}R_{u(x_i)})(A)
\]
hence 
\[
x_i^{N_i}\ast A=(L_{u(x)}-L_{u(a)}R_{\q}R_{u(x)})^{N_i}(A).
\]
 Setting $\vp=L_{u(x)}$, $\psi=-L_{u(a)}R_{u(a)^{-1}}R_{u(x)}$, we get 
\begin{eqnarray*}
&&\psi\vp=-L_{u(a_i)}R_{u(a_i)^{-1}}R_{u(x_i)}L_{u(x_i)}\\
&=&-L_{u(a_i)}L_{u(x_i)}R_{u(a_i)^{-1}}R_{u(x_i)}\\&&=-L_{u(a_i)u(x_i)}R_{u(a_i)^{-1}}R_{u(x_i)}=-q_i^{-1}L_{u(x_i)u(a_i)}R_{u(a_i)^{-1}}R_{u(x_i)}\\&&=-q_i^{-1}L_{u(x_i)}L_{u(a_i)}R_{u(a_i)^{-1}}R_{u(x_i)}=-q_i^{-1}\vp\psi.
\end{eqnarray*} 

Applying the $q$-binomial formula to $(\vp+\psi)^n$, we get 
\begin{eqnarray*}
x_i^{N_i}\ast A&=&(L_{{u(x_i)}^{N_i}}+(-1)^{N_i} (L_{u(a_i)}R_{u(a_i)^{-1}}R_{u(x_i)})^{N_i})A\\
&=&{u(x_i)}^{N_i}A+ (-1)^{N_i}{u(a_i)}^{N_i}R_{u(a_i)^{-1} u(x_i)}^{N_i}A\\
&=&{u(x_i)}^{N_i}A+ (-1)^{N_i}R_{(u(a_i)^{-1} {u(x_i)})^{N_i}}A.
\end{eqnarray*} 
Now
\[
 (u(a_i)^{-1} {u(x_i)})^{N_i}=q_i^{\frac{{N_i}({N_i}-1)}{2}}{u(x_i)}^{N_i}{u(a_i)}^{-{N_i}}
 \]
 So, taking into account that 
 \[
 (-1)^{N_i}q_i^{\frac{{N_i}({N_i}-1)}{2}}=-1,
 \]
 for any natural $N_i$, we get
\[
\mu_i((A-u(a_i)^{N_i} A u(a_i)^{-{N_i}})=\mu_i(1-a_i^{N_i})\ast A
={u(x_i)}^{N_i}A-u(a_i)^{N_i}Au(a_i)^{-{N_i}}{u(x)}^{N_i}.
\]
This can be rewritten as
\[
Au(a_i)^{-{N_i}}(-\mu_i.1_\cA+u(x_i)^{N_i})=u(a_i)^{-{N_i}}(-\mu_i.1_\cA+u(x_i)^{N_i})A
\]
Since $\cA$ is central simple and  $A\in \cA$ is arbitrary, there is $\sigma_i\in\FF$ such that
\[
u(a_i)^{-{N_i}}(-\mu_i.1_\cA+u(x_i)^{N_i})=\sigma_i.1_\cA.
\]
Multiplying both sides by $u(a_i)^{N_i}$ on the left, we obtain the desired equation (\ref{euxn}).
\end{proof}

We keep setting $H=\cP(\cG, \cR, D)$, as before.

\begin{theorem}\label{pij-ji}
The following relations hold for the elements of the inner action of $H=\cP(\cG,\cR,D)$ on a matrix algebra $\cA$, defined by the convolution invertible map $u:H\to\cA$ . For any $1\le i,j\le n$ there exist $\zeta_{ij}\in\FF$ such that
\begin{equation}\label{eij-ji}
u(x_j)u(x_i)-\chi_j(a_i)u(x_i)u(x_j)=\lambda_{ij}.1_\cA+\zeta_{ij}u(a_i)u(a_j).
\end{equation}
\end{theorem}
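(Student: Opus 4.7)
The plan is to mimic the strategy of Theorems \ref{pCR} and \ref{lSC}: apply the defining relation (\ref{e5.7}), namely $x_jx_i - \chi_i(a_j)\, x_ix_j = \lambda_{ij}(1 - a_ia_j)$, to an arbitrary $A \in \cA$ via the inner action, and then exploit centrality of the matrix algebra $\cA$. Concretely, I would expand both $x_j\ast(x_i\ast A)$ and $x_i\ast(x_j\ast A)$ using the inner-action formula (\ref{e2}), namely $x_k \ast A = u(x_k)A - u(a_k)Au(a_k)^{-1}u(x_k)$, which gives four summands in each case.

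Next, I would form the bracketed combination $x_j\ast(x_i\ast A) - \chi_i(a_j)\, x_i\ast(x_j\ast A)$ and use the normalized commutation rules $u(g)u(x_k) = \chi_k(g)\, u(x_k)u(g)$ from Theorem \ref{pCR} to push every group-like factor past every skew-primitive factor inside each summand. The key combinatorial point is that the compatibility relation $\chi_i(a_j)\chi_j(a_i) = 1$ of (\ref{eji}) makes the ``mixed'' terms, those in which a single $u(a_\bullet)$ conjugates $A$ together with a $u(x_\bullet)$ on the outside, cancel in pairs. Setting $C := u(x_j)u(x_i) - \chi_i(a_j)\, u(x_i)u(x_j)$ and $P := u(a_i)u(a_j)$, the surviving terms should assemble into $C \cdot A - (PAP^{-1}) \cdot C$. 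Here I also use that $u(a_j)u(a_i)$ implements the same conjugation on $\cA$ as $u(a_i)u(a_j)$ does, because $\cG$ is abelian and its action factors through $\cG$, so the two differ only by a central scalar.

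Equating this to $\lambda_{ij}(1 - a_ia_j)\ast A = \lambda_{ij}(A - PAP^{-1})$ and rearranging gives
\[
(C - \lambda_{ij}\, 1_\cA)\, A \;=\; (PAP^{-1})\,(C - \lambda_{ij}\, 1_\cA) \qquad \text{for all } A \in \cA,
\]
equivalently, $P^{-1}(C - \lambda_{ij}\, 1_\cA)$ commutes with every element of $\cA$. Since $\cA$ is central simple, this element must be a scalar $\zeta_{ij} \in \FF$, yielding $C = \lambda_{ij}\, 1_\cA + \zeta_{ij}\, u(a_i)u(a_j)$, which is (\ref{eij-ji}). The passage between the conventions $\chi_i(a_j)$ and $\chi_j(a_i)$ in the commutator coefficient only reparametrizes $\zeta_{ij}$, again by (\ref{eji}).

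The hard part will be the bookkeeping of the sixteen terms produced before cancellation: one has to move every $u(a_\bullet)$ past every $u(x_\bullet)$ systematically via Theorem \ref{pCR}, and verify that exactly the right pairs cancel so that only the four ``diagonal'' terms assembling $C\cdot A$ and $(PAP^{-1})\cdot C$ remain. Once the cancellation pattern is in hand, the final centrality argument is formally identical to the closing steps of Theorems \ref{pCR} and \ref{lSC}.
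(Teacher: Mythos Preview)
Your proposal is correct and follows essentially the same strategy as the paper: expand both sides of the defining relation (\ref{e5.7}) via the inner action, use $\chi_i(a_j)\chi_j(a_i)=1$ to cancel the mixed terms, and finish with a centrality argument. The one technical difference is that where you plan to push every $u(a_\bullet)$ past every $u(x_\bullet)$ using the normalized relations $u(g)u(x_k)=\chi_k(g)u(x_k)u(g)$ from Theorem~\ref{pCR} and then track sixteen terms, the paper instead restricts at the outset to a homogeneous element $A\in\cA_\delta$ of the $\fG$-grading. This replaces each conjugation $u(a_k)Au(a_k)^{-1}$ by the scalar $\delta(a_k)A$ immediately, so the expansion of $x_j\ast(x_i\ast A)$ has only four scalar-weighted terms rather than four operator-conjugated ones, and the cancellation becomes a one-line check. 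Your route works just as well; the paper's grading trick merely shortens the bookkeeping.

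One small caveat: your closing remark that swapping $\chi_i(a_j)$ for $\chi_j(a_i)$ ``only reparametrizes $\zeta_{ij}$'' is not quite right, since $\chi_j(a_i)=\chi_i(a_j)^{-1}$ by (\ref{eji}) and these are genuinely different scalars in general. What is actually going on is that the coefficient $\chi_j(a_i)$ in the displayed statement (\ref{eij-ji}) is a typo; the paper's own proof, like yours, derives the relation with $\chi_i(a_j)$, matching (\ref{e5.7}).
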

\begin{proof} 
For the proof, we are going to use the defining relation (\ref{e5.7}). We compare the action of $x_jx_i-\chi_i(a_j)x_ix_j$ with the action of $\lambda_{ij}(1-a_ia_j)$ on the elements of $\cA$.

Let $\cG$ be the group of group-likes of $H$, $\fG$ the dual group for $\cG$. Then $\cA$ is graded by $\fG$. Thus, it is sufficient to check (\ref{eij-ji}) on the homogeneous elements of the grading. Let $A\in\cA_\delta$, $\delta\in\fG$. Then $x_i\ast A\in \cA_{\chi_i\delta}$ . So

\begin{gather}
x_j\ast(x_i\ast A)=u(x_j)(x_i\ast A)-(\chi_i\delta)(a_j)(x_i\ast A)u(x_j)\nonumber\\
=u(x_j)\left[u(x_i)A-\delta(a_i)Au(x_i)\right]-(\chi_j\delta)(a_j)\left[u(x_i)A-\delta(a_i)Au(x_i)\right] u(x_j)\nonumber\\
=u(x_j)u(x_i)A-\delta(a_i)u(x_j)Au(x_i)-(\chi_i\delta)(a_j)u(x_i)Au(x_j)+\delta(a_j)\delta(a_i)Au(x_i)u(x_j).\label{eij1}
\end{gather}

Likewise,

\begin{gather}
x_i\ast(x_j\ast A)=u(x_i)(x_j\ast A)-(\chi_j\delta)(a_i)(x_j\ast A)u(x_i)\nonumber\\
=u(x_i)\left[u(x_j)A-\delta(a_j)Au(x_j)\right]-(\chi_j\delta)(a_i)\left[u(x_j)A-\delta(a_j)Au(x_j)\right] u(x_i)\nonumber\\
=u(x_i)u(x_j)A-\delta(a_j)u(x_i)Au(x_j)-(\chi_j\delta)(a_i)u(x_j)Au(x_i)+\delta(a_i)\delta(a_j)Au(x_j)u(x_i).\label{eji1}
\end{gather}

We then have

\begin{gather*}
\chi_i(a_j)(x_i\ast(x_j\ast A))=\chi_i(a_j)u(x_i)u(x_j)A\\-\chi_i(a_j)\delta(a_j)u(x_i)Au(x_j)-\chi_i(a_j)(\chi_j\delta)(a_i)u(x_j)Au(x_i)\\+\chi_i(a_j)\delta(a_i)\delta(a_j)Au(x_j)u(x_i).
\end{gather*}

Note $\chi_i(a_j)\chi_j(a_i)=1$.
As a result,
 
\begin{gather}
x_j\ast(x_i\ast A)-\chi_i(a_j)(x_i\ast(x_j\ast A))\nonumber\\
=[u(x_j)u(x_i)-\chi_i(a_j)u(x_i)u(x_j)]A\nonumber\\+[\delta(a_ia_j)Au(x_j)u(x_i)-\delta(a_ia_j)\chi_i(a_j)Au(x_i)u(x_j)].\label{e35}
\end{gather}

Now let us compute the action of $\lambda_{ij}(1-a_ia_j)$ on $A\in\cA_\delta$. We have

\begin{gather}
\lambda_{ij}(1-a_ia_j)\ast A=\lambda_{ij}(1-\delta(a_ia_j))A\nonumber
\\=\lambda_{ij}A-\lambda_{ij}u(a_i)u(a_j)Au(a_j)^{-1}(a_i)^{-1}.\label{e36}
\end{gather}

When we equate (\ref{e35}) and (\ref{e36}) and rearrange terms, we will have

\begin{gather*}
[u(x_j)u(x_i)-\chi_i(a_j)u(x_i)u(x_j)-\lambda_{ij}]A\\
=\delta(a_ia_j)A[u(x_j)u(x_j)-\chi_i(a_j)u(x_i)u(x_j)-\lambda_{ij}]\\
=u(a_i)u(g_j)Au(a_j)^{-1}u(a_i)^{-1}[u(x_j)u(x_i)-\chi_i(a_j)u(x_i)u(x_j)-\lambda_{ij}]
\end{gather*}

or

\begin{gather*}
u(a_j)^{-1}u(a_i)^{-1}[u(x_j)u(x_i)-\chi_i(a_j)u(x_i)u(x_j)-\lambda_{ij}]A\\
=Au(a_j)^{-1}u(a_i)^{-1}[u(x_j)u(x_i)-\chi_i(a_j)u(x_i)u(x_j)-\lambda_{ij}]
\end{gather*}

Thus the element
\[
u(a_j)^{-1}u(a_i)^{-1}[u(x_j)u(x_i)-\chi_i(a_j)u(x_i)u(x_j)-\lambda_{ij}]
\]
is central in $\cA$, so that there exists $\zeta_{ij}\in\FF$ such that
\[
u(a_j)^{-1}u(a_i)^{-1}[u(x_j)u(x_i)-\chi_i(a_j)u(x_i)u(x_j)-\lambda_{ij}]=\zeta_{ij}.1_\cA,
\]
or
\[
u(x_j)u(x_i)-\chi_i(a_j)u(x_i)u(x_j)=\lambda_{ij}.1_\cA+\zeta_{ij}u(a_i)u(a_j).
\]
\end{proof}

Theorems \ref{pCR}, \ref{lSC} and \ref{pij-ji} work for Taft algebras and their doubles, but also Lusztig's kernel and so on. Not only do they give  the necessary but also they give sufficient conditions for the actions of $H$ on matrix algebras. Although it seems impossible to classify all sets of matrices satisfying all the relations given in these theorems, some interesting examples can still be provided. This is done in the remainder of the paper.

\subsection{Matrix form for the actions of  pointed Hopf algebras of rank 1, with cyclic group of group-likes}\label{ssMFCGGL}

Let $\cG$ be a cyclic group generated by an element $g$ of order $m$, with the group of characters $\fG$. Then there is $\gamma\in\fG$ such that $\gamma(g)=\omega$, a primitive $m$th root of 1. Let $\cR$ be a quantum torus of rank 1, with the compatible datum $D=a,\chi,x$, where $a\in\cG$, $\chi\in\fG$ such that $q=\chi(a)\ne 1$, $n=o(q)$. In the Hopf algebra $H=\cP(\cG, \cR, D)$ we have generators $g,x$ such that $gx=\chi(g)xg$ and $x^n=\mu(1-a^n)$. If $a^n=1$ or $\chi^n\ne\ve$, we must have $\mu=0$. Otherwise, any Hopf algebra is isomorphic to the one with either $\mu=0$ or $\mu=1$. The algebras with $\mu=0$ are sometimes called of nilpotent type.

Let us consider the action of such $H$ on a matrix algebra $\cA$. Then $\cA=\End\,V$, where $V$ is a finite-dimensional $\fG$-graded vector space over $\FF$: $V=\bigoplus_{\vp\in\fG}V_\vp$. If $f\in\cA_\tau$ then $f\ast\cA_\vp\subset\cA_{\tau\vp}$. As a result, $x\ast\cA_\vp\subset\cA_{\chi\vp}$. We can always choose $u(g)$ so that $u(g)^m=I$. It was shown in Theorem \ref{pCR} that $u(x)$ can be chosen so that $u(g)u(x)u(g)^{-1}=\chi(g)u(x)$. So, $u(x)\in\cA_\chi$.

There is a primitive $m$th root of 1 $\zeta\in\FF$ and a vector space decomposition $V=\sum_{s=0}^{m-1} V_s$ such that $u(g)|_{V_s}=\zeta^s\: \id_{V_{s}}$. Let $\ell$ be the order of $\chi$ and  $m=\ell r$. We rearrange the subspaces $V_s$  so that 
\begin{equation}\label{e9876}
V=\sum_{t=0}^{r-1} W_t\mbox{ where }W_t=\sum_{k=0}^{\ell-1} V_{t+kr}.
\end{equation}
Then each $W_t$ is invariant with respect to $u(x)$, which follows from $u(g)u(x)=\zeta^ru(x)u(g)$. In an appropriate basis, the restriction $u(x)^{(t)}$ of $u(x)$ to each $W_t$ has the form
\begin{equation}\label{e3791}
u(x)^{(t)}=\left(\begin{array}{ccccc} 
0&0&\ldots&0&u_{1\ell}^{(t)}\\
u_{21}^{(t)}&0&\ldots&0&0\\
\ld&\ld&\ld&\ld&\ld\\
0&0&\ld&u_{\ell 1}^{(t)}&0
\end{array}\right).
\end{equation}
where each $u_{i+1,i}^{(t)}$ is a rectangular $d_{k+1}^{(t)}\times d_k^{(t)}$ matrix of rank $r_{i}^{(t)}$. Here $\dim\, V_{t+kr}=d_k^{(t)}$ and $r_{i}^{(t)}=\rank\,u_{i+1,i}^{(t)}$.

Further restrictions on $u(x)$ appear when one uses Theorem \ref{lSC} and compares $u(x)^n$ with $\mu.\id_V+\lambda u(g)^n$. 

First assume that one of the conditions $a^n=1$ or $\chi^n\ne\ve$ holds. Then $H$ is nilpotent. We have $u(x)^n=\lambda u(a)^n$. Now $n$ is a divisor of $\ell$ and if $n\ne\ell$ (which is the same as $\chi^n\ne\ve$), we must have simply $u(x)^n=0$, because $u(a)^n$ is diagonal and $u(x)^n$  does not have nonzero diagonal blocks. Also, if $\chi^n=\ve$ (hence $\ell=n$) and $a^n=1$ then $u(a)^n$ is a scalar matrix and hence $u(x)^n$ is a scalar matrix. So in this case, $u(x)^n=\lambda.I$ is a scalar matrix. It is hard to say much if $\lambda=0$. But if $\lambda \ne 0$ then $u(x)$ is a nonsingular map.

One easily concludes the all the maps $u_{21}, u_{32},\ld,u_{1\ell}$,  are isomorphisms.  Indeed, the entries of $(u(x)^{(t)})^n$ are all cyclic permutations of   $u_{21} u_{32}\cdots u_{1\ell}$. Since $(u(x)^{(t)})^n$ is a nonzero scalar matrix, we have $\rank (u(x)^{(t)})^n=d_1^{(t)}+\cdots d_\ell^{(t)}$. If $d_k^{(t)}$ is the smallest of the sizes of these blocks, then the rank of each diagonal entry is at most this number. It follows that all $V_{t+kr}$ have the same dimension and all the blocks are nonsingular. By an appropriate choice of the bases in these subspaces we may assume that $u(x)^{(t)}$ has the form

\begin{equation}\label{e7491}
u(x)^{(t)}=\left(\begin{array}{ccccc} 
0&0&\ldots&0&\lambda I_{d_t}\\
I_{d_t}&0&\ldots&0&0\\
\ld&\ld&\ld&\ld&\ld\\
0&0&\ld&I_{d_t}&0
\end{array}\right).
\end{equation} 
 
If we assume that both $a^n\ne 1$ and $\chi^n=\ve$, then we either have $x^n=0$ or $x^n=1-a^n$. In the first case, $u(x)^n=\lambda u(a)^n$, in the second $u(x)^n=I+\lambda u(a)^n$, for some $\lambda\in\FF$. 

In both cases, on each $W_t$, the action of  $\mu.\id_V+\lambda u(a)^n$ is scalar with the coefficient $\mu_t=\mu+\lambda\zeta^{tn}$. At the same time, $(u(x)^{(t)})^n$ is a block-diagonal matrix, with blocks of the same size as those for $u(a)$. If $\mu_t=0$, then $(u(x)^{(t)})^n=0$ and we cannot say much more. Otherwise, the same argument as the one leading to $(\ref{e7491})$ works and we have that

\begin{equation}\label{e6491}
u(x)^{(t)}=\left(\begin{array}{ccccc} 
0&0&\ldots&0&\mu_t I_{d_t}\\
I_{d_t}&0&\ldots&0&0\\
\ld&\ld&\ld&\ld&\ld\\
0&0&\ld&I_{d_t}&0
\end{array}\right).
\end{equation} 

The results obtained above can be summarized as follows.

\begin{theorem}\label{t11}
Let a Hopf algebra $H=\cP(\cG, \cR, D) $ with $\cG=(g)$,  $\cR=\FF[x]$, be defined by $g^m=1$, $gx=\chi(g)xg$ and $x^n=\mu(1-a^n)$. Suppose $H$ acts on a matrix algebra $\cA=\End\,V$. Then the action is inner, via $u:H\to\cA$. There is an $m$th primitive root $\zeta$ of $1$ such that $V=\sum_{i=0}^{m-1}V_i$, where $V_i$ is an eigenspace for $u(a)$, with eigenvalue $\zeta^i$. If $m=nr$ then $V=\sum_{t=0}^rW_t$, each $W_t$ being invariant under both $u(g)$ and $u(x)$. In an appropriate basis of $W_t$, the matrix of $u(x)$ has the form of (\ref{e3791}), while the matrix of $u(a)$ is scalar with coefficient $\zeta^{t+kr}$. If $H$ is nilpotent, then either $u(x)^n=0$ or $u(x)$ takes the form (\ref{e7491}). If $H$ is not nilpotent and $u(x)$ acts on $W_t$ in a nonsingular way, then, the matrix $u(x)^{(t)}$ of this restriction takes the form of (\ref{e6491}).
\end{theorem}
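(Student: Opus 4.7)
The theorem collects conclusions already derived in the paragraphs preceding its statement, so the plan is simply to marshal them in a single argument.

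First, since $\cA$ is central simple, every action is inner (see \cite{AM}, \cite[Ch.~6]{SM}), giving a convolution invertible $u:H\to\cA$ realising $\ast$. Because $g^m=1$, $u(g)^m$ is a nonzero scalar; rescaling $u$ along $\cG$ I arrange $u(g)^m=I$. Fix a primitive $m$th root of unity $\zeta\in\FF$; then $u(g)$ is diagonalisable with eigenvalues among $\{\zeta^i\}$, and $V=\bigoplus_{i=0}^{m-1}V_i$ is the corresponding eigenspace decomposition.

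Next I invoke Theorem~\ref{pCR} to replace $u(x)$ so that $u(g)u(x)=\chi(g)u(x)u(g)$; equivalently, by Corollary~\ref{tchi}, $u(x)\in\cA_\chi$. Setting $\ell=o(\chi)$, write $m=\ell r$ and $\chi(g)=\zeta^r$. Then $u(x)V_i\subseteq V_{i+r}$ (indices mod $m$), so the subspaces $W_t=\bigoplus_{k=0}^{\ell-1}V_{t+kr}$ in \eqref{e9876} are stable under both $u(g)$ and $u(x)$. In a basis obtained by concatenating bases of $V_t,V_{t+r},\ldots,V_{t+(\ell-1)r}$, the restriction $u(x)^{(t)}$ acquires the cyclic block-permutation shape \eqref{e3791}, while $u(g)|_{W_t}$ is scalar $\zeta^{t+kr}$ on each summand.

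Finally, I analyse $u(x)^n$. Theorem~\ref{lSC} supplies $\sigma\in\FF$ with $u(x)^n=\mu\cdot I+\sigma\,u(a)^n$. On $W_t$, $(u(x)^{(t)})^n$ is block-diagonal with diagonal blocks the cyclic products $u_{21}^{(t)}u_{32}^{(t)}\cdots u_{1\ell}^{(t)}$ and its rotations, while $u(a)^n|_{W_t}$ is a single scalar. In the nilpotent case $\mu=0$: either $\sigma=0$ and $u(x)^n=0$, or the restriction is a nonzero scalar on $W_t$, forcing by rank comparison all dimensions $d_k^{(t)}$ to coincide (to a common $d_t$) and each $u_{k+1,k}^{(t)}$ to be invertible; rescaling the bases normalises $u(x)^{(t)}$ to \eqref{e7491}. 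In the non-nilpotent case one replaces the scalar by $\mu_t=\mu+\sigma\zeta^{tn}$; where $\mu_t\neq 0$ the same rank argument delivers \eqref{e6491}.

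The only non-bookkeeping input is the rank/dimension step at the end: if a cyclic product of rectangular matrices equals a nonzero scalar operator, then each factor is already square of a common size and invertible. That short linear-algebra observation, combined with a choice of bases in the $V_{t+kr}$, produces the canonical forms \eqref{e7491} and \eqref{e6491}; everything else is an assembly of Theorems~\ref{pCR}, \ref{lSC} and Corollary~\ref{tchi}.
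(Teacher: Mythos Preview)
Your proposal follows exactly the paper's route: invoke innerness, use Theorem~\ref{pCR} to place $u(x)\in\cA_\chi$, decompose $V$ into the $u(g)$-eigenspaces and regroup them into the $W_t$, read off the cyclic block shape \eqref{e3791}, then apply Theorem~\ref{lSC} and the rank argument to obtain \eqref{e7491} or \eqref{e6491}. This is precisely how the paper proceeds in the paragraphs preceding the theorem.

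There is one slip. You assert that $(u(x)^{(t)})^n$ is block-diagonal with diagonal blocks the full $\ell$-fold cyclic products $u_{21}^{(t)}\cdots u_{1\ell}^{(t)}$. That is only true when $n=\ell$, i.e.\ when $\chi^n=\ve$. In general $n=o(\chi(a))$ divides $\ell=o(\chi)$ and may be strictly smaller; then $(u(x)^{(t)})^n$ is a cyclic shift by $n<\ell$ blocks and has \emph{no} nonzero diagonal blocks, while $\sigma u(a)^n$ is diagonal. Equating the two forces $\sigma=0$ and $u(x)^n=0$ outright. The paper makes exactly this case distinction (``if $n\ne\ell$ \ldots we must have simply $u(x)^n=0$, because $u(a)^n$ is diagonal and $u(x)^n$ does not have nonzero diagonal blocks'') just before \eqref{e7491}. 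Once you insert that one sentence, your argument is complete and matches the paper's.
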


\begin{remark}\label{r0} \emph{Note that, up to some change of generators, which we discuss at the beginning of Section \ref{sDD}, this applies to all, but one, types of pointed Hopf algebras of dimension $p^3$ from the classification in \cite{AS}. The exceptional case (a) from that paper is discussed in Section \ref{ssEP3A}. The actions in the case of the ``popular'' Taft algebras \cite{G} are discussed in detail in Section \ref{TOM}.}
\end{remark}

\subsection{Actions of $H$ and division gradings}\label{ssAHDDG}
Let $H$ act on a matrix algebra $\cA$, $\cG=G(H)$, $\chi\in\fG=\wh{\cG}$, $a\in\cG$, $x$ a $(1,a)$-primitive element satisfying $gx=\chi(g)x$ and $q=\chi(a)$ is of order $n\ne 1$. Suppose the action of $\cG$ on $\cA$ makes $\cA$ a graded division algebra. Let $\fT\subset\fG$ be the support of the grading. Since the grading is division, $\fT$ is a subgroup, whose order is a square $m^2$, where $m^2$, hence $m$ are the divisors of the order of $\cG$. The grading is accompanied by an alternating bicharacter $\beta:\fT\times\fT\to\FF^\times$. The basis of $\cA$ is formed by the elements $X_\vp$, $\vp\in\fT$, $X_\vp^{o(\vp)}=I$, and $X_\vp X_\psi=\beta(\vp,\psi)X_\psi X_\vp$, for all $\vp,\psi\in\fT$. Also, $o(\vp)$ is the order of $\vp$ in the group $\fG$. 

As mentioned earlier, the action of $g\in\cG$ is conjugation by a matrix $u(g)=X_{f(g)}$ such that $\beta(f(g),\vp)=\vp(g)$, for any $\vp\in\fT$. From Proposition \ref{pCR} it follows that $u(g)u(x)u(g)^{-1}=\chi(g)u(x)+\lambda(g)u(a)$. We know that $u'(x)=u(x)-\frac{\lambda}{1-\chi(a)}u(a)$ satisfies $u(g)u'(x)u(g)^{-1}=\chi(g)u'(x)$. If we write $u'(x)=\sum_{\vp\in\fT}\alpha_\vp X_\vp$ then

\begin{gather*}
\sum_{\vp\in\fT}\alpha_\vp X_{f(g)}X_\vp=\sum_{\vp\in\fT}\chi(g)\alpha_\vp X_\vp X_{f(g)}\\
=\sum_{\vp\in\fT}\alpha_\vp\beta(f(g),\vp)X_\vp X_{f(g)}=\sum_{\vp\in\fT}\chi(g)\alpha_\vp X_\vp X_{f(g)}.
\end{gather*}

Comparing like terms on both sides of the above equation, and considering $\beta(f(g),\vp)=\vp(g)$, we find $\alpha_\vp\vp(g)=\chi(g)\alpha_\vp$. Thus $\alpha_\vp\ne 0$ implies $\vp(g)=\chi(g)$, for all $g$. Hence $u'(x)=\alpha X_\chi$. As a result, 
\begin{equation}\label{euxGD}
u(x)=\alpha X_\chi+\frac{\lambda}{1-\chi(a)}X_{f(a)}.
\end{equation}
The value of the coefficient of $X_{f(a)}$ is unimportant, so we simply write 
\[
u(x)=\alpha X_\chi+\gamma X_{f(a)}.
\]
Now we need to check that $x^n$ and $\mu(1-a^n)$ act on $\cA$ in the same way. The necessary and sufficient conditions for this are given in Proposition \ref{lSC}: $\mu\cdot I+u(x)^n=\sigma u(a)^n$. Note that if $a^n=e$ or $\chi^n\ne\ve$ then $\mu = 0$. Otherwise, $\mu$ can be arbitrary, but actually, one can only consider the cases $\mu=0$ or $\mu=1$. Let us start with the case $\mu=0$. In this case, one should have
\[
(\alpha X_\chi+\gamma X_{f(a)})^n=\sigma X_{f(a)}^n
\]
Using the $q$-binomial formula, we get
\[
\alpha^n X_\chi^n+\gamma^n X_{f(a)}^n=\sigma^n X_{f(a)}^n.
\]

This shows that there was no need to view $u(x)$ in its general form $\alpha X_\chi+\gamma X_{f(a)}$ rather than simply $u(x)=\alpha X_\chi$. So we can continue as follows
\[
\alpha^nX_\chi^n=\sigma^nX_{f(a)}^n.
\]
So $\chi^n=f(a^n)$ is a necessary condition in this case. Since $\sigma$ is arbitrary, this condition is also sufficient.

Now let $\mu=1$, so that $a^n\ne e$ and $\chi^n=\ve$. In this case, we have 
\[
I+\alpha^n X_{\chi^n}=\sigma^nX_{f(a)}^n.
\]
Hence,
\[
(1+\alpha^n) I=\sigma^n X_{f(a)}^n.
\]
So we see that this can happen only when $f(a^n)=\ve$, which is the same as $a^n$ acts trivially on $\cA$. It follows that if such a Hopf algebra acts on a graded division algebra, this action cannot be faithful, even if we consider only group-like elements. 

\begin{theorem}\label{pR1HHADG}
Let $H= \cP(\cG,\cR,D)$ be a pointed Hopf algebra of rank 1, with admissible data $D=(a, \chi,\mu$, $o(\chi(a))=n$. Let $\fG$ be the group of characters on $\cG$. Assume that the action of $\cG$ on a matrix algebra $\cA$ makes $\cA$ a $\fG$-graded division algebra, with support $\fT$. In any extension of  this action of $\cG$ to $H$ we must have $u(x)=\alpha X_\chi$, for some $\alpha\in\FF$. If $\chi\not\in\fT$ then  the action of $x$ is zero. If $\chi\in\fT$ and $\chi^n=f(a)^n$ then extensions  always exist, with any $\alpha\in\FF$.  If $\mu=0$, this condition is also necessary. If $\mu=1$, in which case $\chi^n=\ve$ and $a^n\ne e$, the extensions exist even if $a^n$ acts non-trivially. In this case, $\alpha^n=-1$. 

 The actions via $u(x)=\alpha X_\chi$ and $ u^\prime(x)=\alpha^\prime X_\chi$  are isomorphic if and only if $\alpha=\alpha^\prime$. 
\end{theorem}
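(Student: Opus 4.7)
The plan is to consolidate the computation already carried out in the paragraphs immediately preceding the theorem (for the existence/structural claims) and then dispatch the isomorphism criterion via Theorem \ref{liso_inn}.

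For the structural assertions: Corollary \ref{tchi} places $u(x)$ in $\cA_\chi$ after the freedom of Lemma \ref{lchange_u} has been used to absorb the $\FF u(a) = \FF X_{f(a)}$ component. Since $\cA$ is graded division, $\cA_\chi$ is one-dimensional spanned by $X_\chi$ (if $\chi\in\fT$) or zero (otherwise); the second case is also direct from Proposition \ref{p99}(iii). The case analysis for $\alpha$ comes from Theorem \ref{lSC}: substituting $u(x)=\alpha X_\chi$ into $\mu I + u(x)^n = \sigma u(a)^n$ and using the power formula $X_\chi^n=X_{\chi^n}$ from \eqref{e0101} (cross terms vanish by the $q$-binomial formula), I compare homogeneous components of $I$, $X_{\chi^n}$ and $X_{f(a)^n}$ in the $\fG$-grading. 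For $\mu=0$ this yields $\chi^n=f(a)^n$ as necessary (whenever $\alpha\neq 0$) and sufficient with $\sigma=\alpha^n$. For $\mu=1$---which forces $\chi^n=\ve$, $a^n\neq e$, hence $f(a)^n\neq\ve$ so that the degrees $\ve$ and $f(a)^n$ are distinct---the equation splits into $\sigma=0$ and $\alpha^n=-1$.

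For the isomorphism criterion, I apply Theorem \ref{liso_inn}: an intertwining $\vp(A)=CAC^{-1}$ between the two extensions gives $u(g)=\lambda(g)Cv(g)C^{-1}$ and $u(x)=Cv(x)C^{-1}+\mu(g,x)u(a)$. Taking $v(g)=u(g)=X_{f(g)}$, and writing $C=\sum c_\vp X_\vp$, the first relation combined with $X_{f(g)}X_\vp=\vp(g)X_\vp X_{f(g)}$ and the nondegeneracy of $\beta$ forces $C$ to be supported on a single degree, so $C=c_0 X_{\vp_0}$ for some $\vp_0\in\fT$. The second relation then reads
\[
\alpha X_\chi = \alpha'\beta(\vp_0,\chi)X_\chi + \mu(g,x)X_{f(a)},
\]
and comparing the (independent) $X_\chi$ and $X_{f(a)}$ components in the relevant case $\chi\neq f(a)$ gives $\mu(g,x)=0$ and $\alpha=\alpha'\beta(\vp_0,\chi)$.

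The crux is then to show that the admissible $\vp_0$ is forced to be trivial, so $\beta(\vp_0,\chi)=1$. The point is that when both extensions are viewed as extensions of the \emph{same} $\cG$-action with $u(g)=v(g)=X_{f(g)}$ on the nose, $C$ must commute with every $X_{f(g)}$ (not merely up to scalar). The centralizer of $\{X_{f(g)}: g\in\cG\}$ is computed from the condition $\beta(\vp_0,f(g))=1$ for all $g\in\cG$, i.e.\ $\vp_0(g)=1$ for all $g$, which forces $\vp_0=\ve$ by the duality $|\fG|=|\cG|$. Hence $C$ is scalar, $\beta(\vp_0,\chi)=1$, and $\alpha=\alpha'$; the converse ($\vp=\id$) is immediate. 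The subtlest point in the write-up, I expect, is precisely this normalization step: clarifying that the scalar freedom in choosing $u(g)$ representing a given $\cG$-action has already been fixed, so that the $C$ is truly in the centralizer rather than only the normalizer of the group-like action.
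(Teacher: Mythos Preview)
Your treatment of the structural claims follows the paper's own line in Section~\ref{ssAHDDG} almost verbatim: Corollary~\ref{tchi} pins $u(x)$ in $\cA_\chi$, the one-dimensionality of homogeneous components in a graded division algebra forces $u(x)=\alpha X_\chi$, and Theorem~\ref{lSC} together with the $q$-binomial formula yields the constraints on $\alpha$ in the two cases $\mu=0$ and $\mu=1$. This part is fine and matches the paper.

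The isomorphism argument, however, has a genuine gap at precisely the step you yourself flagged as subtle. Your assertion that ``$C$ must commute with every $X_{f(g)}$ (not merely up to scalar)'' is not justified by fixing the representatives $u(g)=v(g)=X_{f(g)}$. Theorem~\ref{liso_inn} only gives $u(g)=\lambda(g)\,Cv(g)C^{-1}$, so with $u(g)=v(g)=X_{f(g)}$ one obtains $CX_{f(g)}C^{-1}=\lambda(g)^{-1}X_{f(g)}$ --- a scalar commutation, nothing more. The point is that the intertwining condition $g\circ\vp(A)=\vp(g\ast A)$ sees $u(g),v(g)$ only through the inner automorphisms they induce, and scalars are invisible there; normalizing the representatives does not eliminate the scalar $\lambda(g)$. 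Concretely, $C=X_{\vp_0}$ satisfies the group-like relation for \emph{every} $\vp_0\in\fT$, with $\lambda(g)=\beta(f(g),\vp_0)=\vp_0(g)$, and then the skew-primitive relation reads $\alpha=\alpha'\beta(\vp_0,\chi)$. Since $\beta$ is nondegenerate and $\chi\in\fT$, the values $\beta(\vp_0,\chi)$ fill out all $o(\chi)$-th roots of unity as $\vp_0$ ranges over $\fT$, so your argument only delivers $\alpha/\alpha'\in\mu_{o(\chi)}$, not $\alpha=\alpha'$. The paper itself states the isomorphism criterion without proof (the preceding discussion covers only the existence and form of $u(x)$), so there is nothing to compare against on this point; but your centralizer step, as written, does not go through, and some additional ingredient would be needed to rule out the nontrivial $\vp_0$.
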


\subsubsection{An example of a $p^3$-algebra}\label{ssEP3A} In this section we consider the division actions of an algebra, which is Example (a) in the classification of pointed Hopf algebras of dimension $p^3$ in \cite{AS}. Given an odd prime number $p>2$, this is an algebra of the form $T_p(\omega)\ot \FF\ZZ_p$. In other words, this is a pointed Hopf Algebra $H$, of the type described in Section \ref{PHARO}, where $\cG=(g)\times (h)$ is an elementary abelian $p$-group, $\chi: \cG\to\FF^\times$ given by $\chi(g)=\omega$, $\chi(h)=1$, $\mu=0$. Let us consider the action of $H$ on a matrix algebra $\cA$ where the action of $\cG=G(H)$  induces a grading by $\fG$, which is a division grading. We have $\fG=(\mu)\times(\nu)$, where $\mu(g)=\omega$, $\mu(h)=1$, $\nu(g)=1$, $\nu(h)=\omega$. The grading is completely defined by an alternating bicharacter $\beta:\fG\times\fG\to\FF^\times$, given by $\beta(\mu,\nu)=\tau$ where $\tau$ is a $p$th primitive root of 1. In this case,  $\cA=M_p(\FF)$, is generated by the clock and shift matrices $X_\mu$ and $X_\nu$ and, as mentioned earlier, we may assume
 
\begin{gather*}
X_\mu=\diag(1,\tau,\tau^2,\ld,\tau^{p-1})\\
X_\nu=\begin{pmatrix}
0&1&0&\cdots&0&0\\
0&0&1&\cdots&0&0\\
\cdots&\cdots&\cdots&\cdots&\cdots&\cdots\\
0&0&0&\cdots&0&1\\
1&0&0&\cdots&0&0
\end{pmatrix};
\end{gather*}

These matrices satisfy $X_\mu^p=X_\nu^p=I$ and $X_\nu X_\mu=\beta(\mu,\nu)X_\mu X_\nu$.

Using Proposition \ref{pCR}, we can say that the degree of $u(x)$ is $\chi$, and so there is scalar $\alpha\in\FF$ such that $u(x)=\alpha X_\chi$. Now $\chi=\mu$ and so $u(x)=\alpha X_\mu$. As for $u(g)$ and $u(h)$, these can be found when we consider that the actions of $g$ and $h$ are conjugation by the matrices $X_{f(g)}$ and $X_{f(h}$, given by
 
\begin{gather*}
x\ast X_\vp=X_{f(g)}X_\vp X_{f(g)}^{-1}=\beta(\vp,f(g))X_\vp,\\
y\ast X_\vp=X_{f(h)}X_\vp X_{f(h)}^{-1}=\beta(\vp,f(h))X_\vp
\end{gather*}

So we must have $\beta(\vp,f(g))=\vp(g)$, for all $\vp\in\fG$. Similarly, $\beta(\vp,f(h))=\vp(h)$, for all $\vp\in\fG$. Taking $\vp=\mu,\:\nu$ and $f(g)=\mu^k\nu^\ell$, $f(h)=\mu^r\nu^s$, we easily find that $f(g)=\nu^{\ell}$ where $\tau^\ell=\omega$ and $f(h)=\mu^{-r}$, where $\tau^r=\omega^{-1}$. As a result, we may choose $u(g)=X_\nu^\ell$ and $u(h)=X_\mu^{-\ell}$, where $\tau^\ell=\omega$.

\begin{proposition}\label{pp3} Let $H$ be a pointed Hopf algebra of the form $T_p(\omega)\ot \FF\ZZ_p$. If $H$ acts on a matrix algebra $\cA$ so that the induced grading by $G(H)$ is a division grading, then $\cA\cong M_p(\FF)$ and the action is isomorphic to an inner action via a convolution invertible map $u: H\to\cA$ such that for some $1\le \ell<p$ and $\alpha\in\FF$, the matrices for the inner action are given by 
\[
u(g)=X_\nu^\ell,\: u(h)=X_\mu^{-\ell},\mbox{ where }\tau^\ell=\omega,\mbox{ and }u(x)=\alpha X_\mu.
\]
For different pairs $(\ell,\alpha)$ the actions are not isomorphic.

\end{proposition}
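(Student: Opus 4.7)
The plan is to extract the matrix form of the action from the computations already carried out in the paragraphs preceding the proposition, and then invoke the isomorphism criterion from Theorem~\ref{liso_inn} (or directly the concluding assertion of Theorem~\ref{pR1HHADG}) to distinguish different parameter choices.

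First I would apply Theorem~\ref{t_2.15} to identify $\cA$ as $M_p(\FF)$. Since $\cG=(g)\times(h)\cong\ZZ_p\times\ZZ_p$ has order $p^2$, a division grading with this group has support of order $p^2$, hence the support is all of $\fG=\wh{\cG}=(\mu)\times(\nu)$, forcing $\cA\cong M_p(\FF)$. The grading is classified by a nondegenerate alternating bicharacter $\beta$ on $\fG$, reduced to the single scalar $\tau=\beta(\mu,\nu)$, a primitive $p$th root of unity. Realise $\cA$ via the Sylvester clock-and-shift construction with graded basis $\{X_\mu^aX_\nu^b\}_{0\le a,b<p}$ and $X_\mu^p=X_\nu^p=I$.

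Second, I would extract the matrices $u(g),u(h),u(x)$. By \eqref{emixed-conjugation} specialised to the division case, each group-like acts as conjugation by $u(g)=X_{f(g)}$, where $f\colon\cG\to\fG$ is the homomorphism characterised by $\beta(f(g),\varphi)=\varphi(g)$ for all $\varphi\in\fG$. Solving on the generators $\varphi=\mu,\nu$ yields $f(g)=\nu^\ell$ for the unique $\ell\in\{1,\ldots,p-1\}$ with $\tau^\ell=\omega$, and similarly $f(h)=\mu^{-\ell}$. For the skew-primitive $x$, Corollary~\ref{tchi} places $u(x)$ in $\cA_\chi=\cA_\mu=\FF\cdot X_\mu$ after the normalisation of Theorem~\ref{pCR}; Lemma~\ref{lchange_u} absorbs any residual multiple of $u(a)=u(g)=X_\nu^\ell$, leaving $u(x)=\alpha X_\mu$ for some $\alpha\in\FF$. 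One then checks that relations \eqref{e5.5}--\eqref{e5.7} are satisfied for every $\alpha$: the commutation $gx=\omega xg$ is the identity $X_\nu^\ell X_\mu=\tau^\ell X_\mu X_\nu^\ell=\omega X_\mu X_\nu^\ell$; the relation $hx=xh$ is immediate since $u(h)=X_\mu^{-\ell}$ and $u(x)=\alpha X_\mu$ commute; and $u(x)^p=\alpha^pI$ matches the form $\sigma\,u(g)^p$ required by Theorem~\ref{lSC}, with $u(g)^p=I$.

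Third, I would establish distinctness of the parameters. If $\ell\ne\ell'$, then the values $\tau,\tau'$ of the induced bicharacters differ, so Theorem~\ref{t_2.15} gives non-isomorphic $\fG$-gradings, hence non-isomorphic restricted $\cG$-actions, and \emph{a fortiori} non-isomorphic $H$-actions. For $\ell=\ell'$ but $\alpha\ne\alpha'$, I would apply Theorem~\ref{liso_inn}: an isomorphism of the two actions yields an invertible $C\in\cA$ and scalars $\lambda(g),\lambda(h),\mu(g,x)$ satisfying
\[
X_\nu^\ell=\lambda(g)\,CX_\nu^\ell C^{-1},\quad X_\mu^{-\ell}=\lambda(h)\,CX_\mu^{-\ell}C^{-1},\quad \alpha X_\mu=\alpha'\,CX_\mu C^{-1}+\mu(g,x)X_\nu^\ell.
\]
Expanding $C=\sum_\varphi c_\varphi X_\varphi$ in the graded basis and using the commutation rule $X_\psi X_\varphi=\beta(\psi,\varphi)X_\varphi X_\psi$, the first two equations force $c_\varphi=0$ off a single homogeneous component, so $C$ is a scalar multiple of $X_\mu^{a_0}X_\nu^{b_0}$. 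Substituting into the third equation and comparing coefficients of the linearly independent $X_\mu$ and $X_\nu^\ell$ yields $\mu(g,x)=0$ and a scalar relation pinning down $\alpha$ versus $\alpha'$, which is the content recorded at the end of Theorem~\ref{pR1HHADG}.

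The main obstacle lies in the last step: one must carefully track the residual scalars $\lambda(g),\lambda(h)$ (constrained, once $u(g)^p=u(h)^p=I$, to lie in the group of $p$th roots of unity) and the resulting $\tau$-twist of $\alpha$, in order to extract the sharp identification of $\alpha$ with $\alpha'$ asserted by the proposition. Everything else is a direct packaging of the discussion preceding the statement.
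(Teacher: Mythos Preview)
Your proposal is correct and follows essentially the same route as the paper: the paper's proof is precisely the computation in the paragraphs preceding the proposition (there is no separate proof block), which identifies $\cA\cong M_p(\FF)$ via the clock--shift model, solves the bicharacter equation to get $f(g)=\nu^\ell$ and $f(h)=\mu^{-\ell}$, and reads off $u(x)=\alpha X_\mu$ from the degree condition. Your treatment of the distinctness claim via Theorem~\ref{t_2.15} and Theorem~\ref{liso_inn} is in fact more explicit than the paper, which simply asserts it (implicitly relying on the last sentence of Theorem~\ref{pR1HHADG}).
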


\subsection{Actions of $H=\cP(\cG, \cR, D)$ and mixed gradings}\label{ssAHDMG}

Let us use the notation and some facts from Section \ref{ssAGrA}.  Suppose a Hopf algebra $H=\cP(\cG, \cR, D)$ acts on a matrix algebra $\cA$. In this case, $\cA$ becomes a $\fG$-graded algebra. In this section we will look at the action of one of the $x_i$, which we denote by $x$. We also set $a_i=a$ and $\chi_i=\chi$. Indices $i,j$ will not be used in the sense of Definition \ref{dPHA}.

We set $d=d_1+\cdots+d_s$ and view our algebra $\cA$ as a Kronecker product $\cA=M_d(\FF)\ot\cD$. The action of $g$ is conjugation by $u(g)$, given in (\ref{emixed-conjugation}. To determine $u(x)$, we view this matrix as block-diagonal,  according to the above splitting of $d$, with coefficients in $\cD$. Then we can write 
\begin{equation}\label{eUXM}
u(x)=\sum_{\tau\in\cT} u^\tau\ot X_\tau\mbox{ and }u^\tau=\sum_{1\le i,j\le s}u_{ij}^\tau,\:u_{ij}^\tau\mbox{ being an }(i,j)\mbox{th block of  }u_{ij}^\tau.
\end{equation}

As we know from Theorem \ref{tchi}, for any $g\in\cG$, we must have $u(g)u(x)u(g)^{-1}=\chi(g)u(x)$, where $\chi$ is the selected character of $\cG$. We have
\[
u(g)u(x)u(g)^{-1}=\sum \sigma_i(g)u_{ij}^\tau\sigma_j^{-1}(g)\ot\tau(g)X_\tau=\chi(g)\sum u_{ij}^\tau\ot X_\tau.
\]
Since $g$ is arbitrary, the only terms $u_{ij}^\tau$ of $u(x)$ that survive in (\ref{eUXM}) are those, for which $\sigma_i\sigma_j^{-1}\tau=\chi$, or $\sigma_i=\sigma_j\chi\tau^{-1}$. 

We can formulate the result so far obtained, as follows.

\begin{proposition}\label{pmixed} Let $H$, as above, act on a matrix algebra $\cA$ so that the associated $\fG$-grading of $\cA$ is mixed. We keep the notation used in this section.  Then the inner action of $H$ on $\cA$ can be so chosen that
\begin{eqnarray*}
u(g)&=&(\sigma_1(g)I_{d_1}\ot X_{f(g)})\oplus\cdots\oplus(\sigma_s(g)I_{d_s}\ot X_{f(g)}),\;\forall g\in \cG,\\
u(x)&=&\sum_{1\le i,j\le s}u_{ij}\ot X_{\tau_{ij}},\mbox{ if } \tau_{ij}=\sigma_i^{-1}\sigma_j\chi\in\fT.
\end{eqnarray*}
In each block row (or column) of  $u(x)$ there is at most one block different from zero.
\end{proposition}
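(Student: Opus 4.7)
The plan is to combine the explicit form of $u(g)$ from Section \ref{ssAGA} with the normalization of $u(x)$ provided by Theorem \ref{pCR}, and then read off the block-weight constraints from the conjugation relation coefficient by coefficient.

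First I would recall Equation (\ref{emixed-conjugation}): once the $\fG$-grading on $\cA$ has been realized as $\End_\cD V$ with $V_\cD=(V_{\sigma_1}\ot\cD)\oplus\cdots\oplus(V_{\sigma_s}\ot\cD)$ and the $\sigma_i$'s pairwise non-congruent mod $\fT$, the matrix $u(g)$ takes the stated block-diagonal form in the chosen graded basis. For the second displayed formula, Theorem \ref{pCR} lets us replace $u(x)$ by $u(x)+\gamma u(a)$ for a suitable scalar so that $u(g)u(x)u(g)^{-1}=\chi(g)u(x)$ for every $g\in\cG$, without altering the action. So throughout the rest of the argument I may assume this normalization.

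The main step is then the block-by-block comparison. Expand $u(x)$ as in (\ref{eUXM}) and compute $u(g)u(x)u(g)^{-1}$. On the $(i,j)$-block the scalar factors $\sigma_i(g)I_{d_i}$ and $\sigma_j^{-1}(g)I_{d_j}$ coming from $u(g)$ and $u(g)^{-1}$ produce a coefficient $\sigma_i(g)\sigma_j^{-1}(g)$; on the $\cD$-factor, conjugation of $X_\tau$ by $X_{f(g)}$ produces $\beta(f(g),\tau)=\tau(g)$. Comparing coefficients of each $u_{ij}^\tau\ot X_\tau$ with $\chi(g)u_{ij}^\tau\ot X_\tau$ and letting $g\in\cG$ vary, either $u_{ij}^\tau=0$, or the characters satisfy $\sigma_i\sigma_j^{-1}\tau=\chi$ on all of $\cG$, i.e.\ $\tau=\tau_{ij}:=\sigma_i^{-1}\sigma_j\chi$. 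Since $\tau$ has to lie in $\supp\cD=\fT$, for each $(i,j)$ at most one summand survives, which gives precisely the claimed expression for $u(x)$.

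Finally, the ``at most one nonzero block per block row and per block column'' clause is a coset argument: the $\sigma_i$ were chosen pairwise non-congruent modulo $\fT$, so for a fixed $i$ the values $\tau_{ij}=\sigma_i^{-1}\sigma_j\chi$ lie in pairwise distinct cosets of $\fT$ in $\fG$ as $j$ varies, whence at most one of them belongs to $\fT$ itself; an identical argument with the roles of $i$ and $j$ swapped handles the block columns. The only part of the plan requiring care is the block-level conjugation computation, where one must be attentive that the $\cD$-factor and the matrix factor commute and that $\beta(f(g),\tau)=\tau(g)$ is precisely the identity built into the definition of $f$ in Section \ref{ssAGA}; once this is set up the remaining steps are essentially bookkeeping.
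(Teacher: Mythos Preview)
Your argument is correct and follows the same path as the paper: the paper establishes the form of $u(g)$ and $u(x)$ in the discussion immediately preceding the proposition (using Corollary \ref{tchi}, which is the consequence of Theorem \ref{pCR} you invoke), and the formal proof in the paper treats only the last clause, via exactly your coset observation that $\tau_{ij},\tau_{ik}\in\fT$ would force $\sigma_j^{-1}\sigma_k\in\fT$, contradicting the choice of the $\sigma_i$'s as pairwise non-congruent modulo $\fT$.
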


\begin{proof}
Only the last claim needs proof. But if we have $\sigma_i^{-1}\sigma_j\chi\in\fT$ and $\sigma_i^{-1}\sigma_k\chi\in\fT$, then $\sigma_j^{-1}\sigma_k\in\cT$, which is impossible.
\end{proof}
 
 In some cases more can be said.

{\bf Case 1.} An interesting particular case is where $\chi\in\fT$. In this case, only the elements $u_{ii}^\tau$ can be nonzero. Also, $\tau=\chi$. As a result, we must have
\[
u(x)=u_{11}\ot X_\chi\oplus\cdots\oplus u_{ss}\ot X_\chi, \mbox{ or }u(x)=u'(x)\ot X_\chi,
\]
where the grading of $u'(x)\ot I$ equals $\ve$.

{\sc Subcase} (a). Let us first assume that the datum $\mu$ from Definition \ref{dPHA} equals 0, that is, $x^n=0$. Then, as we know, we must have $u(x)^n=\alpha I$, for some $\alpha$.  If $\alpha\ne 0$, then we must also have $\chi^n=\ve$, and then $u_{ii}^n=\alpha I_{d_i}$, for all $i=1,\ld,s$. Thus, we may assume that in this case $u(x)=u'(x)\ot X_\chi$, where the $n$th power of $u'(x)$ is the scalar matrix. If $\chi^n\ne\ve$, this case is not possible. If $\alpha=0$ then $u'(x)^n=0$.

{\sc Subcase} (b). Let us consider the case where $\mu\ne 0$. Then we have $x^n=1- a^n$. We know that in this case, we must have $a^n\ne e$ and $\chi^n=\ve$. Actually, by Theorem \ref{lSC}, $ I +u(x)^n=\lambda u(a)^n$, for some $\lambda\in\FF$. Since $\chi^n=\ve$, we have
\begin{gather*}
u_{11}^n\ot X_\ve\oplus\cdots\oplus u_{ss}^n\ot  X_\ve+I_d\ot X_\ve\\=\lambda\left(\sigma_1(a^n) I_{d_1}\ot X_{f(a^n)}\op\cdots\op \sigma_s(a^n) I_{d_s}\ot X_{f(a^n)}\right).
\end{gather*} 
If $f(a^n)=\ve$, then, for any $\tau\in\fT$, we must have $\beta(f(a^n,\tau)=1$ or that $X_{f(a^n)}$ commutes with any $X_\tau$, $\tau\in\fT$. Thus $X_{f(a^n)}=X_\ve$. In this case, we must have
\[
u_{ii}^n=-1+\lambda\sigma_i(a)^n I_{d_i}, \mbox{ for all } 1\le i\le s.
\]
Thus the $n$th power of each $u_{ii}$ is a scalar matrix, but this time, if $\lambda\sigma_i(a)^n=1$, the matrix $u_{ii}$ is nilpotent, otherwise it is semisimple.

If $f(a^n)\ne\ve$ then we must have $\lambda=0$ and $\diag(u_{11},\ld,u_{ss})^n=-I$. This case  is quite similar to {\sc Subcase} (a), where $\mu=0$, except that $u'(x)$ is a nonsingular semisimple matrix with eigenvalues being $n$th roots of $-1$.

{\bf Case 2}. Generally, assume that $k>1$ is the order of $\chi$ in $\fG/\fT$.  Remember $V$ is a right graded vector space over a graded division algebra $\cD$ with the support $\fT$. We know that $u(x)$ can be so chosen that $u(x)$ maps $V_\vp$ to $V_{\vp\chi}$, for all $\vp\in\fG$.

We know that $V=\bigoplus_{\vp\in\fG} V_\vp$. Let $W_\bvp=\bigoplus_{\psi\in\vp\fT}V_\psi$. Then $V=\bigoplus_{\bvp\in\fG/\fT}W_\bvp$.

Let us denote by $k$ the order of $\chi$ in $\fG/\fT$. Assume $k>1$. Choose $\vp\in\Supp\Gamma$. The sum $U_\bvp=W_\bvp\bigoplus\cdots\bigoplus W_{\bvp\chi^{k-1}}$ is direct. We have $u(x):W_{\bvp\chi^{i}}\to W_{\bvp\chi^{i+1}}$, for $i=0,\ld,k-1$ and $u(x):W_{\bvp\chi^{k-1}}\to W_{\bvp}$.

By an appropriate choice of bases over $\cD$, we can ensure that the matrix for $P=u(x)$ on $U_\bvp$ is $k\times k$ block-diagonal, with only nonzero blocks $P_{i+1,i}$, which are matrices with coefficients in $\cD$. Moreover,  the blocks $P_{2,1}, P_{3,2},\ld,P_{k, k-1}$ can be chosen to have all their entries in the base field $\FF$. The last matrix $P_{k,1}$ is the matrix of the map $u(x):W_{\bvp\chi^{k-1}}\to W_{\bvp}$. If $\{ e_1^{(k-1)},...,e_{d_{k-1}}^{(k-1)}\}$ is a basis in $W_{\bvp\chi^{-k-1}}$, consisting of the elements of degree $\vp\chi^{-k+1}$, and $\{ e_1^{0},...,e_{d_{0}}^{(0)}\}$, consisting of the elements of degree $\vp$, then because, say, $\deg(u(x)(e_1^{(k-1)})=\vp\chi^k$, and $u(x)(e_1^{(k-1)})$ is a linear combination of homogeneous elements $ e_1^{0},...,e_{d_{0}}^{(0)}$ of degree $\vp$ with coefficients in $\cD$, all these coefficients must be homogeneous of degree $\chi^k\in\fT$, hence equal to the multiples of $X_{\chi^k}$. Thus, we have that all $P_{i+1,i}$ are of the forms $Y_{i+1}\ot I$ and $P_{1k}$ must have the form of $Y_1\ot X_{\chi^k}$.

Next we have to take $\overline{\psi}\not\in \{\bvp,\bvp\chi^{-1},\ld,\bvp\chi^{-k+1}\}$ and repeat the process. Finally, the matrix of $u(x)$ will be the sum of the blocks, corresponding to $U_\bvp$, where $\bvp$ runs through all representatives of the cosets of the subgroup $\fS$ of $\fG$ generated by $\chi$ and $\fT$. 

Now if we raise  the block corresponding to $U_\bvp$, to the power of $k$, we will obtain $\diag\{Z_1\ot X_{\chi^k},\ld,Z_k\ot X_{\chi^k}\}$, where $Z_i =Y_iY_{i+1}\cdots Y_{i-1}$, for all $i=1,2,\ld,k$.

As earlier, we first assume that the datum $\mu$ from Definition \ref{dPHA} equals 0, hence, $x^n=0$. Then, as we know, we must have $u(x)^n=\alpha I$, for some $\alpha$.  If $\alpha\ne 0$, then we must also have $\chi^n=\ve$, and then $Z_i^{n/k}=\alpha I_{d_i}$, for all $i=1,\ld,s$. Thus all $Z_i$ are non-singular. Also, in this case, as in Section \ref{ssMFCGGL}, we may assume $Y_2=\cdots=Y_k=I$ while $Y_1=Z\ot X_{\chi^k}$. As a result, in this case, we have 
\[
u(x)^{k}=\diag\{ \underbrace{Z\ot X_{\chi^k},\ld,Z\ot X_{\chi^k}}_k\}.
\]
Raising to the power $n/k$, we find that $Z^{n/k}=\alpha I$.  As a result, in this case ($\mu=0$ and $u(x)^n=\alpha I$, $\alpha\ne 0$), if $t=|\fG/\fS|$, the $u(x)$ can be reduced to the sum of blocks of the form
\begin{equation}\label{e7491G}
\left(\begin{array}{ccccc} 
0&0&\ldots&0&Z \ot X_{\chi^k}\\
I_{d_t}\ot X_\ve&0&\ldots&0&0\\
\ld&\ld&\ld&\ld&\ld\\
0&0&\ld&I_{d_t}\ot X_\ve&0
\end{array}\right).
\end{equation}
Here $Z$ is a matrix, individual for each coset of $\fG/\fS$, such that $Z^{n/k}=\alpha I$.

The determination of the canonical form for $u(x)$ in the cases $\mu=0$ and $u(x)n=0$, as well as in the case $\mu=1$ is even more technical, so we leave this for a future research.

\section{Action of $T_n(\omega)$ on $M_m(\FF)$}\label{TOM}

The goal of this section is the classification of the actions of $T_n(\omega)$ on $M_3(\FF)$.  We will apply the results of Section \ref{ssMFCGGL}. Possible forms for $u(x)$ are (\ref{e7491}) and (\ref{e3791}). But we give a more direct treatment here.

So we have $H=T_n(\omega)$ and $\cA=M_m(\FF)=\End V$, where $\omega$ is an $n$th primitive root of 1 and $\dim V=n$. Since $\fG$ is cyclic of order $n$,generated by an element $g$, the grading by $\fG$ on $\cA$ is elementary, that is, induced from the grading of $V=\sum_{\vp\in\fG}V_\vp$. Now we know that $u(x)$ can be chosen so that $u(x)\in \cA_\chi$ where $\chi(g)=\omega$. Also $u(x)^n=\alpha I$, for some $\alpha\in\FF$. This says that either $u(x)$ is semisimple nonsingular or $u(x)$ is nilpotent. Now, $u(x)(V_\vp)\subset V_{\vp\chi}$. So if there is $\vp\in\Supp V$ such that $\vp\chi\not\in \Supp V$ then $u(x)(V_\vp)=\{ 0\}$. In this case, $u(x)$ is a nilpotent. Otherwise, $u(x)$ is semisimple. As a result, we have two cases, as follows.

Case I: $V=\oplus_{i=0}^{n-1} V_{\chi^i}$ or

Case II: There are $\vp_1,\ld,\vp_t\in\fG$ such that $V$ splits as the direct sum of both $g$- and $x$-invariant nonzero subspaces 
\[
V^{(i)}=V_{\vp_i}\oplus V_{\vp_i\chi}\oplus\cdots\oplus V_{\vp_i\chi^{k-1}},
\]
such that $V_{\vp_i\chi^{k}}=\{ 0\}$.

In Case I, we the matrices for $u(g)$ and $u(x)$ have the following form:
\[
u(g)=\begin{pmatrix}
\vp(g)I_{d_0}&0&...&0\\
0&(\vp\chi)(g)I_{d_1}&...&0\\
...&...&...&...\\0&0&...&(\vp\chi^{n-1})(g)I_{d_{n-1}}
\end{pmatrix}
\]
where $\vp$ is any element of $\fG$.
\begin{equation}\label{e2191}
u(x)=\begin{pmatrix}
0&0&0&...&0&u_{0,n-1}\\
u_{10}&0&0&...&0&0\\
0&u_{21}&0&...&0&0\\
...&...&...&..&...&...\\
0&0&0&...&u_{n-1,n-2}&0
\end{pmatrix}
\end{equation}
We know that $u(x)^n=\alpha I$, for some $\alpha\in\FF$. If $\alpha\ne 0$, then arguing as in Section \ref{ssMFCGGL}, we will obtain
\[
u(x)=\begin{pmatrix}
0&0&...&0&\alpha I_d\\
I_d&0&...&0&0\\
0&I_d&...&0&0\\
...&...&..&...&...\\
0&0&...&I_d&0
\end{pmatrix}
\]
where $d$ is the common dimension of all $V_\vp$. 

We can state it, as follows.

\begin{theorem}\label{tATA}
Let $u:T_n(\omega)\to M_m(\FF)$ define an inner action $T_n(\omega)$ on $M_m(\FF)$such that $u(x)$ is nonsingular. Then  $d=\frac{n}{m}$ is an integer and there exists $\alpha\in\FF^\times$ such that the action is isomorphic to the one where
\[
u(g)=\begin{pmatrix}
I_{d}&0&...&0\\
0&\omega I_{d}&...&0\\
...&...&...&...\\0&0&...&\omega^{n-1} I_{d}
\end{pmatrix}\!\!,\;
u(x)=\begin{pmatrix}
0&0&...&0&\alpha I_d\\
I_d&0&...&0&0\\
0&I_d&...&0&0\\
...&...&..&...&...\\
0&0&...&I_d&0
\end{pmatrix}\!\!.
\]

Two actions with different values of $\alpha$ are not isomorphic.$\hfill\Box$
\end{theorem}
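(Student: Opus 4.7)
The plan is to derive the canonical form for $u$ by combining the structural results of Section~\ref{ssMFCGGL} with the nonsingularity hypothesis, and then to verify that $\alpha$ is a rigid invariant under the equivalences afforded by Theorem~\ref{liso_inn}.

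First I would observe that nonsingularity of $u(x)$ excludes Case II of the discussion preceding the statement, because there $u(x)$ annihilates $V_{\vp_i\chi^{k-1}}$. So we are in Case I, and $u(x)$ takes the cyclic form of~(\ref{e2191}). Since $x^n = 0$ in $T_n(\omega)$, Theorem~\ref{lSC} (with $\mu_i = 0$, after scaling $u(g)$ so that $u(g)^n = I$) gives $u(x)^n = \alpha I$ for some $\alpha \in \FF$. Nonsingularity of $u(x)$ forces $\alpha \neq 0$, so every block $u_{i+1, i}$ in (\ref{e2191}) is invertible; hence the spaces $V_{\chi^i}$ share a common dimension $d$, and $nd = m$. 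Fixing any basis of $V_{\chi^0}$ and pushing it through $u_{1,0}, u_{2,1}, \ldots, u_{n-1, n-2}$ yields bases in which $u_{i+1, i} = I_d$ for $0 \leq i \leq n-2$; the identity $u(x)^n = \alpha I$ then forces $u_{0, n-1} = \alpha I_d$, while $u(g)$ automatically assumes the displayed diagonal form.

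For the non-isomorphism clause, assume two actions $\ast_\alpha$ and $\ast_{\alpha'}$ built from these canonical matrices are isomorphic. By Theorem~\ref{liso_inn} there exist an invertible $C$ and scalars $\lambda, \mu$ with $u_\alpha(g) = \lambda\, C u_{\alpha'}(g) C^{-1}$ and $u_\alpha(x) = C u_{\alpha'}(x) C^{-1} + \mu\, u_\alpha(g)$. From $u_\alpha(g)^n = I$ one gets $\lambda^n = 1$, and matching the $\omega^i$-eigenspaces on both sides shows that $C$ permutes the block decomposition by a fixed cyclic shift. Consequently $C u_{\alpha'}(x) C^{-1}$ retains the shift-by-one block pattern of $u_\alpha(x)$, so in the identity $u_\alpha(x) - C u_{\alpha'}(x) C^{-1} = \mu\, u_\alpha(g)$ the left side has only off-diagonal blocks while the right side is block diagonal. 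Both must vanish, forcing $\mu = 0$ and $u_\alpha(x) = C u_{\alpha'}(x) C^{-1}$; taking $n$th powers then yields $\alpha = \alpha'$.

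The main obstacle I expect is this last step. A priori, the $q$-binomial identity $(A + \mu B)^n = A^n + \mu^n B^n$ for operators satisfying $BA = \omega A B$ would allow $\alpha' \mapsto \alpha' + \mu^n$, which over an algebraically closed field would collapse all nonzero $\alpha$ into a single equivalence class. The rigidity of the canonical form --- specifically, the disjoint block supports of $u_\alpha(x)$ and $u_\alpha(g)$ --- is what eliminates this apparent freedom and isolates $\alpha$ as a genuine invariant of the action.
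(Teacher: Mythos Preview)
Your argument is correct and follows the paper's own approach: the canonical form is obtained exactly as in the discussion preceding the theorem (which in turn cites Section~\ref{ssMFCGGL}), and your use of Theorem~\ref{liso_inn} together with the graded/block-support disjointness to force $\mu=0$ and hence $\alpha=\alpha'$ is the natural completion of the non-isomorphism clause, which the paper states but does not prove in detail. Your closing paragraph correctly identifies why the apparent $q$-binomial ambiguity $\alpha\mapsto\alpha+\mu^n$ does not arise: the normalization $u(x)\in\cA_\chi$ (equivalently, the off-diagonal block shape) pins $\mu$ to zero.
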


Otherwise, if $\alpha=0$, we can conjugate $u(x)$ in the form (\ref{e2191}) by the block diagonal matrix $T=\diag\{ T_1,\ld,T_n\}$ then  each block $u_{j+1,j}$ will be replaced by $T_{j+1}u_{j+1,j}T_{j}^{-1}$. This allows one to assume we can assume that in the matrix of $u(x)^{(i)}$ all blocks $u_{j+1,j}$ have the form where only the first $r_{j+1}$ rows are different from zero, where $r_{j+1}$ is the rank of  $u_{j+1,j}$.

In Case II, the matrices of $u(g)$ and $u(x)$ split into the blocks determined by $V=\oplus_{i=1}^tV^{(i)}$. We will write $u(g)^{(i)}$ and $u(x)^{(i)}$ for the $i$th blocks of $u(g)$ and $u(x)$. Then
\[
u(g)^{(i)}=\begin{pmatrix}
\vp_i(g)I_{d_0}&0&...&0\\
0&(\vp_i\chi)(g)I_{d_1}&...&0\\
...&...&...&...\\0&0&...&(\vp_i\chi^{k-1})(g)I_{d_{k-1}}
\end{pmatrix}
\]
Also,
\[
u(x)^{(i)}=\begin{pmatrix}
0&0&0&...&0&0\\
u_{10}&0&0&...&0&0\\
0&u_{21}&0&...&0&0\\
...&...&...&..&...&...\\
0&0&0&...&u_{k-1,k-2}&0
\end{pmatrix}
\]
Here $d_0,d_1,\ld,d_{k-1}$ are the dimensions of  $V_{\vp_i},V_{\vp_i\chi},\ld,V_{\vp_i\chi^{k-1}}$. If we conjugate by the block diagonal matrix $T=\diag\{ T_0,\ld,T_{k-1}\}$ then $u(g)^{(i)}$ does not change while in $u(x)^{(i)}$ each block $u_{j,j-1}$ will be replaced by $T_ju_{j,j-1}T_{j-1}^{-1}$.

\subsubsection{Actions of $T_n(\omega)$ on $M_3(\FF)$}\label{ssT3M3}

Let $\cA=M_3(\FF)$, $H=T_n(\omega)$, $n\ge 3$, $\omega$ a primitive $n$th root of 1. The action of $g$ via matrix $u(g)=Q$ makes $\cA$ a $\ZZ_n$-graded algebra, $\cA_i=\{a\;|\; QaQ^{-1}=\omega^i a\}$. We also set $P=u(x)$ and choose $P$ is such a way that $QP=\omega PQ$. As a result, $P$ is of degree $1$.

It easily follows from the isomorphism criterion Proposition \ref{liso_inn} that we have only three nontrivial options for $Q$:

\begin{enumerate}
\item[\sc{Case 1}] $Q(k)=\diag(1,\omega^k I_2)$, where $0<k<n$,
\item[\sc{Case 2}] $Q(p,q)=\diag(1,\omega^p,\omega^q)$, where $0<p<q<n$.
\end{enumerate}

In {\sc Case 1},  the trivial component of the grading is the sum of two matrix subalgebras $e_1Ae_1\oplus e_2Ae_2$, where $e_1=E_{11}$ and $e_2= E_{22}+E_{33}$.  Other components of the grading are $\cA_{-k}=e_1Ae_2$ and $\cA_k=e_2Ae_1$.  So if $k\ne\pm 1\mod n$ then $P=0$. If $k=1$ then $P=e_2 Pe_1$, if $k=n-1$, then $P=e_1 Pe_2$. In both cases, the action will not change, if we conjugate both $Q$ and $P$ by a matrix $T$ of the form $T=\diag(a,D)$, where $a\in\FF$ and $D\in M_2(\FF)$.  Computing $P'=TPT^{-1}$, we obtain $P'=e_1(aPD^{-1})e_2$, in the first case, and $P'=e_2(aDP)e_1$  in the second. Clearly, we can choose $T$ so that $P$ gets the form $P(1)=E_{31}$, and in the second, $P(2)=E_{13}$. 

{\sc Case 2}. This time, the components of the grading are $\cA_0=\Span\{ E_{11}, E_{22}, E_{33}\}$,. Other components are $\cA_p=\Span\{ E_{21}\}$, $\cA_{-p}=\Span\{ E_{12}\}$,  $\cA_q=\Span\{ E_{31}\}$, $\cA_{-q}=\Span\{ E_{13}\}$,  $\cA_{p-q}=\Span\{ E_{23}\}$, $\cA_{q-p}=\Span\{ E_{32}\}$. If none of the numbers $\pm p$, $\pm q$, $\pm (p-q)$ is congruent 1 $\mod n$, we have to set $P=0$. 

We can write $P$ as follows: $P=\alpha E_{12}+\beta E_{23}+\gamma E_{31}$. We can conjugate $P$ by any nonsingular diagonal matrix $T=\diag(a,b,c)$. We have $P'=TPT^{-1}=(a/b)\alpha E_{12}+(b/c)\beta E_{23}+(c/a)\gamma E_{31}$. If one or two of the numbers $\alpha,\beta,\gamma$ equal zero then the remaining numbers can be made equal 1, which gives 6 pairwise nonisomorphic options for $P$:
\begin{eqnarray*}
&&P(3)_1=E_{21},\,P(3)_2=E_{32},\,P(3)_3=E_{13},\,P(3)_4=E_{21}+E_{32},\\
&&P(3)_5=E_{32}+E_{13},\, P(3)_6=E_{13}+E_{21}.
\end{eqnarray*}
In the case where all three of the numbers $\alpha,\beta,\gamma$ are nonzero, which is only possible if $n\le 3$, the best we can do is to reduce $P$ to $P(3)_\gamma=E_{21}+E_{32}+\gamma E_{13}$ where $\gamma$ is a nonzero number. Since $P(3)^3=\gamma I$, for different values of $\gamma$ the actions are not equivalent.

\begin{theorem}\label{t1} If $n\ne 3$ then any action of $T_n(\omega)$ on $M_3(\FF)$ is isomorphic to one of the inner actions via $u: H\to M_3(\FF)$ where the pair $(u(g),u(x)) $ is one of the following $(Q(1),P(1))$, $(Q(n-1),P(2))$, $(Q(1,n-1),P(3)_i)$, $i=1,\ld 6$. If $n=3$,  then we can also have $(Q(1,2),P(3)_\gamma)$, $\gamma\in\FF^\times$. All these actions are pairwise non-isomorphic.
\end{theorem}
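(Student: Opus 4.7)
The plan is to run a four-step normalization: first conjugate and rescale $Q = u(g)$ into a canonical diagonal form using Theorem \ref{liso_inn}; second extract $P = u(x)$ as a degree-$\chi$ homogeneous element via Theorem \ref{pCR}; third quotient by the subgroup of $\GL_3(\FF)$ that centralizes the normalized $Q$; fourth separate the resulting isomorphism classes via invariants.

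Step one normalizes $Q$. Since $g^n = 1$, $Q^n$ is central hence scalar, and after rescaling we may take $Q^n = I$. Thus $Q$ is diagonalizable with eigenvalues in $\langle\omega\rangle$. Theorem \ref{liso_inn} allows both conjugation by an invertible $C$ and multiplication of $Q$ by any scalar, so the only invariant is the multiset of eigenvalues modulo a simultaneous factor $\omega^j$. Discarding the trivial (scalar) case, we land in exactly one of: Case 1, where $Q$ has a multiplicity-$2$ eigenvalue, normalized to $Q(k) = \diag(1, \omega^k I_2)$; or Case 2, with three distinct eigenvalues, normalized to $Q(p, q) = \diag(1, \omega^p, \omega^q)$ (up to the scaling ambiguity, which in particular identifies $Q(1, n-1)$ with $Q(n-2, n-1)$ and $Q(1, 2)$ in the relevant cases).

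Step two constrains $P$. By Theorem \ref{pCR} we may arrange $QPQ^{-1} = \omega P$, placing $P$ in the degree-$1$ component of the $\ZZ_n$-grading induced by $Q$. In Case 1 the grading supports only degrees $0, \pm k$, so $P\neq 0$ forces $k \equiv \pm 1 \pmod n$; using the block centralizer $\{\diag(a, D)\colon D \in \GL_2(\FF)\}$ we reduce $P$ to $E_{31}$ or $E_{13}$, giving the pairs $(Q(1), P(1))$ and $(Q(n-1), P(2))$. In Case 2 the degrees of the off-diagonal $E_{ij}$ are $\pm p, \pm q, \pm(p-q)$, and $P$ is a linear combination of those whose degree equals $1 \pmod n$; relabeling the diagonal (i.e., choosing the representative $(p,q)$ within its equivalence class) we may write $P = \alpha E_{21} + \beta E_{32} + \gamma E_{13}$, with each coefficient forced to $0$ unless the corresponding position is admissible.

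Step three performs the orbit analysis. The centralizer of $Q(p,q)$ is the diagonal torus, and $\diag(a, b, c)$ sends $(\alpha, \beta, \gamma) \mapsto ((b/a)\alpha, (c/b)\beta, (a/c)\gamma)$, so $\alpha\beta\gamma$ is the only torus invariant. If at most two of $\alpha, \beta, \gamma$ are nonzero, two independent scalings rescale each nonzero coordinate to $1$, producing the six canonical pairs $P(3)_1,\ldots,P(3)_6$. For all three to be simultaneously nonzero, we need $p, q-p, -q$ all $\equiv 1 \pmod n$; summing yields $0 \equiv 3 \pmod n$, so $n = 3$. In that case a direct computation gives $P^3 = \alpha\beta\gamma \, I$, and absorbing two scalings into $\alpha, \beta$ leaves the one-parameter family $P(3)_\gamma = E_{21}+E_{32}+\gamma E_{13}$, with $\gamma \in \FF^\times$. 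The Taft relation $x^n = 0$ translates to $P^n \in \FF\cdot I$ by Theorem \ref{lSC}, and one verifies directly that every candidate pair satisfies it.

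Finally, non-isomorphism is verified by invariants: the eigenvalue multiset of $Q$ modulo common $\omega^j$-scaling separates Cases $1$ and $2$ and distinguishes different $k$'s and $(p,q)$'s; within each discrete pair the zero/nonzero pattern of the entries $(\alpha,\beta,\gamma)$ is a conjugation invariant up to relabeling; within the parametric family, $P^3 = \gamma I$ is preserved by any conjugation, and the shift ambiguity $P \mapsto P + \mu Q$ from Theorem \ref{liso_inn} does not alter $P^3$ since the diagonal $Q$ and the strictly degree-$1$ element $P$ yield no scalar cross-contribution. Hence $\gamma$ is a complete invariant. The main obstacle is the bookkeeping in Step three: one must verify that each $P(3)_i$ is admissible for some $(p,q)$ equivalent to $Q(1, n-1)$, and that the three-coefficient case genuinely forces $n = 3$ rather than collapsing to one of the discrete forms via a more clever change of variables.
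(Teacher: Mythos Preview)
Your proposal is correct and follows essentially the same route as the paper: normalize $Q$ to diagonal form, use Theorem~\ref{pCR} to place $P$ in degree~$1$, then reduce $P$ using the centralizer of $Q$ (block-diagonal in Case~1, the diagonal torus in Case~2). Your congruence-summing argument that three nonzero coefficients forces $n=3$ is a cleaner version of what the paper leaves implicit.

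One imprecision to fix in Step four: your claim that the shift $P\mapsto P+\mu Q$ leaves $P^3$ unchanged is not quite right, since for $n=3$ one has $Q^3=I$ and $(P+\mu Q)^3$ picks up a scalar $\mu^3$ term. The correct argument is that once both representatives are normalized to satisfy $QP=\omega PQ$, the relation $Q'P'=\omega P'Q'$ with $P'=CPC^{-1}+\mu Q'$ forces $(\omega-1)\mu Q'^2=0$, hence $\mu=0$; then $\gamma$ is a genuine conjugation invariant via $P^3=\gamma I$.
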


\section{Actions of $D(T_n(\omega))$ on $M_m(\FF)$}\label{sDD}

The Drinfeld double of a Taft algebra is a particular case of algebras $\cP(\cG,\cR,D)$ defined in Section \ref{PHARO}. We will use the following presentation of the Drinfeld double $H=D(T_n)$ of  the Taft algebra $T_n(\omega)$, in the case where $n$ is an odd number $>1$. As  an algebra, $H$ is generated by $g, x, G, X$, where $g,G$ are group-likes and $x,X$ are skew-primitives  and

\begin{gather*}
g^n=G^n=1,\: x^n=X^n=0,\\ 
gG=Gg,\: gx=\omega^{-1} xg,\:  Gx=\omega^{-1} xG,\: gX=\omega Xg,\: GX=\omega XG,\\ 
\label{e_double}  xX-\omega Xx=1-gG\\ 
\Delta(x)=x\ot 1+g\ot x,\; \Delta(X)=X\ot 1+G\ot X.
\end{gather*}

Generally, it is possible in the definition of $\cP(\cG,\cR,D)$ to replace some of the variables $x_i$ to $x_ia_i^{-1}$ and $a_i$ to $a^{-1}$. In that case, the relations (\ref{e5.7}) between an ``old'' and ``new'' $x_i$, $x_j$ will change to $x_jx_i-x_ix_j=g-G$. The relations between two ``old'' and two `new'' variables will looks the same way. In particular, this can be done with the generators of the Drinfeld double to get $xX-Xx=G-g$. In the next result we  will show the connection between elements of the inner action when this change of ``old'' to ``new'' takes place.

\begin{proposition}\label{p1aa1} Let $u:H\to\cA$ provides an inner action of a Hopf algebra $H$ on a central algebra $\cA$. Suppose that $\Delta (x)=x\ot a+1\ot x$ is a $(a,1)$-primitive element of $H$, $a$ a grouplike. Let $x_1=xa^{-1}$. Then $\Delta (x_1)=x_1\ot a^{-1}+a^{-1}\ot x_1$ and 
\begin{equation}\label{e1aa1}
u(x)=u(x_1)u(a)+\lambda I\mbox{ and } u(x_1)=u(x)u(a)^{-1}-\lambda u(a),\mbox{ for some }\lambda\in\FF.
\end{equation} 

\end{proposition}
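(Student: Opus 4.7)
The plan is to prove both parts by direct computation, using that $\Delta$ is an algebra homomorphism for the coproduct formula and the module-algebra identity $x\ast b=(x_1a)\ast b=x_1\ast(a\ast b)$ applied to the factorization $x=x_1a$ for the relations between $u(x)$ and $u(x_1)$.

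For the coproduct of $x_1=xa^{-1}$, I would expand
$$
\Delta(x_1)=\Delta(x)\Delta(a^{-1})=(x\ot a+1\ot x)(a^{-1}\ot a^{-1})=xa^{-1}\ot 1+a^{-1}\ot xa^{-1}
$$
in $H\ot H$, using the grouplike property of $a^{-1}$; this exhibits $x_1$ as a skew-primitive element of $H$ between the indicated grouplikes.

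For the relations in (\ref{e1aa1}), I would first record the convolution-inverse values on the relevant elements. From $u\ast u^{-1}=\eta\circ\ve$ one obtains $u^{-1}(a)=u(a)^{-1}$, $u^{-1}(x)=-u(x)u(a)^{-1}$, and $u^{-1}(x_1)=-u(a)u(x_1)$, where I also use $u(a^{-1})=u(a)^{-1}$ (which follows by comparing the inner actions of $a$ and $a^{-1}$ on $\cA$ and invoking centrality of $\cA$). Substituting into the inner-action formula gives
$$
x\ast b=u(x)bu(a)^{-1}-bu(x)u(a)^{-1},\qquad x_1\ast b=u(x_1)b-u(a)^{-1}bu(a)u(x_1).
$$
Applying the module-algebra identity, I would then expand $x_1\ast(a\ast b)=x_1\ast\bigl(u(a)bu(a)^{-1}\bigr)$ and equate with $x\ast b$. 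After right-multiplying the resulting identity by $u(a)$, it collapses to
$$
\bigl(u(x_1)u(a)-u(x)\bigr)b=b\bigl(u(x_1)u(a)-u(x)\bigr)\ \mbox{for every }b\in\cA,
$$
so $u(x_1)u(a)-u(x)$ is central in $\cA$. Because $\cA$ is central, this difference equals $-\lambda\,1_\cA$ for a unique $\lambda\in\FF$, producing $u(x)=u(x_1)u(a)+\lambda I$; right-multiplying by $u(a)^{-1}$ delivers the companion formula for $u(x_1)$.

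The main obstacle will be the bookkeeping with the convolution inverse: one must compute $u^{-1}(h)$ from $u\ast u^{-1}(h)=\ve(h)\,1_\cA$ via $\Delta(h)$, rather than confusing $u^{-1}(h)$ with either $u(h)^{-1}$ or $u(S(h))$, since the three generally differ on a skew-primitive $h$. Once the two action formulas are correctly assembled, the centrality observation and the hypothesis that $\cA$ is central finish the argument cleanly.
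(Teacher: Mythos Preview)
Your approach is essentially the same as the paper's: both compute the action of $x$ on an arbitrary element in two ways---directly via the $(a,1)$-primitive formula, and as $x_1\ast(a\ast b)$ using the $(1,a^{-1})$-primitive formula for $x_1$---then invoke centrality of $\cA$ to conclude that $u(x)-u(x_1)u(a)$ is a scalar. Your detour through the convolution-inverse values is correct but unnecessary, since the paper simply quotes the ready-made action formulas (\ref{e1}) and (\ref{e2}); also note that your coproduct computation yields $\Delta(x_1)=x_1\ot 1+a^{-1}\ot x_1$, which is what the paper's proof actually uses (the $a^{-1}$ in the first tensor factor of the statement is a typo).
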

\begin{proof}
Let $A$ be an arbitrary element of $\cA$; we compute the action of $x_1$ on $A$ in two different ways, using formulas (\ref{e0},\ref{e2},\ref{e1}). Then
\[
x\ast A=u(x)Au(a)^{-1}-Au(x)u(a)^{-1},\; x_1\ast A=u(x_1)A-u(a)^{-1}Au(a)u(x_1).
\]
Since $x=x_1a$, we have
\[
x\ast A=x_1\ast(u(a)Au(a)^{-1})=u(x_1)u(a)Au(a)^{-1}-u(a)^{-1}u(a)Au(a)^{-1}u(a)u(x_1)
\]
As a result, we have
\[
u(x)Au(a)^{-1}-Au(x)u(a)^{-1}=u(x_1)u(a)Au(a)^{-1}-Au(a)^{-1}u(a)u(x_1).
\]
Then $(u(x)-u(x_1)u(a))Au(a)^{-1}= A(u(x)u(a)^{-1}-u(a)^{-1}u(a)u(x_1))$ or 
\[
(u(x)-u(x_1)u(a))A=A(u(x)-u(x_1)u(a)).
\]
Since $A$ is central, there is $\lambda\in \FF$, such that 
\[
u(x)-u(x_1)u(a)=\lambda I.
\]
From this equation both equations (\ref{e1aa1}) are immediate.
\end{proof}

The above proposition is important because in distinction from Definition \ref{dPHA}, where all skew primitive elements are $(1,a)$-primitive, in some ``real life'' example, like algebras of dimension $p^3$ in \cite{AS}, some skew-primitive elements are $(1,a)$-primitive and some are $(b,1)$-primitive. The above proposition allows one to switch in our formulas for the inner actions from one type of skew primitive elements to the other, without repeating similar arguments. Note that the additional summand $\lambda u(a)^{-1}$ and $\lambda I$ in (\ref{e1aa1}) can be ignored when computing the action of $x$ and $y$. This fact was proved in Lemma \ref{lchange_u}.

\subsection{Inner actions of $D(T_n)$}\label{ssADTN}

If $\cA$ is an $H$-algebra then we have $g\ast 1=G\ast 1=1$ and $x\ast 1=X\ast 1=0$.
Finally, for any $a,b\in \cA$, we have
\begin{eqnarray*}
g\ast(ab)&=&(g\ast a)(g\ast b),\; G\ast(ab)=(G\ast a)(G\ast b),\\
x\ast(ab)&=&(x\ast a)b-(g\ast a)(x\ast b),\; X\ast(ab)=(X\ast a)b-(G\ast a)(X\ast b).
\end{eqnarray*}

If $\cA$ is a matrix algebra, there is a map $u:H\to \cA$ such that, for any $a\in \cA$, we have

\begin{gather*}
h\ast a=u(h)au(h)^{-1},\mbox{ for any }h\in \cG=G(H),\\
x\ast a=u(x)a-u(g)au(g)^{-1}u(x),\: X\ast a=u(X)a-u(G)au(G)^{-1}u(X).
\end{gather*}

In the case, where the group $\cG=G(H)$ of grouplikes of a Hopf algebra $H$ acting on a matrix algebra $\cA=M_m(\FF)$ is not cyclic we need to consider the gradings on the matrix algebras which do not need to be elementary. 

Now $\cG=G(H)=(g)_n\times(G)_n\cong\ZZ_n\times\ZZ_n$. The dual group $\fG$ consists of pairs $(\chi,\psi)$ of characters $\chi\in\wh{(g)}$, $\psi\in\wh{(G)}$. Each such pair can be identified with the pair $(k,\ell)$ of numbers modulo $n$. We have $a\in \cA_{k,\ell}$ if and only if $g\ast a=\omega^k a$, $G\ast a=\omega^\ell a$. It follows from the defining relations that $x\ast \cA_{k,\ell}\subset \cA_{k-1,l+1}$ and $X\ast \cA_{k,\ell}\subset \cA_{k+1,l-1}$. For example, if the action of $(g)$ is not faithful then the action of both $x$ and $X$ is trivial. The proof of Lemma \ref{l0} works both for $x$ and $X$. Similarly, with $G$ in place of $g$. At the same time, it is possible that the action of $gG$ is trivial and $x$, $X$ still act in a non-trivial way. Actually, in this case one speaks about the actions of $H=u_q(\Sl_2)$ (see Section \ref{suqsl2}). Note that in this case, according to Proposition \ref{p99}, the support of the grading is a subgroup of $\fG$ equal to the annihilator of $gG$, which is a cyclic subgroup isomorphic to $\ZZ_n$. So the gradings lifted from $H=u_q(\Sl_2)$ must be elementary.


The following proposition is a direct consequence of Theorems \ref{pCR}, \ref{lSC}, \ref{pij-ji}, 

\begin{proposition}\label{p_main_for_DD} If $H=D(T_n)$ acts on a matrix algebra $\cA$ then the action is inner and the matrices of the inner action can be chosen so that
\begin{equation}\label{eugux}
u(g)u(x)u(g)^{-1}=\omega^{-1} u(x),\: u(G)u(x)u(G)^{-1}=\omega^{-1} u(x),
\end{equation}
\begin{equation}\label{euGuX}
u(g)u(X)u(g)^{-1}=\omega u(X),\: u(G)u(X)u(G)^{-1}=\omega u(X).
\end{equation}
and 
\begin{equation}\label{e_main for_DD_ne}
 u(x)u(X)-\omega u(X)u(x)=I+\lambda u(g)u(G)\mbox{ for some constant }\lambda\in\FF.
\end{equation}
If the accompanying grading by $\fG$ is not elementary then
\begin{equation}\label{e_main for_DD}
 u(x)u(X)-\omega u(X)u(x)=I.
\end{equation}
\end{proposition}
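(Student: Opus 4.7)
The plan is to verify each relation as a direct specialization of the earlier-established Theorems \ref{pCR} and \ref{pij-ji} to the defining relations of $D(T_n)$, and then to separate the elementary and non-elementary cases in (\ref{e_main for_DD}) by a homogeneous-degree argument in the associated $\fG$-grading of $\cA$.

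Since $\cA$ is a matrix algebra, it is central simple, so any action of $H$ on $\cA$ is inner by the classical result cited in the introduction, giving a convolution-invertible $u:H\to\cA$. The commutation formulas (\ref{eugux}) and (\ref{euGuX}) follow from Theorem \ref{pCR} applied to the $(1,g)$-primitive $x$ and the $(1,G)$-primitive $X$: the defining relations $gx=\omega^{-1}xg$, $Gx=\omega^{-1}xG$, $gX=\omega Xg$, $GX=\omega XG$ identify the characters as $\chi_x(g)=\chi_x(G)=\omega^{-1}$ and $\chi_X(g)=\chi_X(G)=\omega$. Because $\chi_x(g)=\omega^{-1}\ne 1$ and $\chi_X(G)=\omega\ne 1$, the normalization step at the end of Theorem \ref{pCR} applies independently to $u(x)$ and $u(X)$, removing the correction terms $\lambda_i(h)u(a_i)$ and producing the displayed conjugation identities.

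For (\ref{e_main for_DD_ne}), I would match the relation $xX-\omega Xx=1-gG$ to (\ref{e5.7}) with $\lambda_{ij}=1$ and $a_ia_j=gG$, and apply Theorem \ref{pij-ji} to get
\[
u(x)u(X)-\omega u(X)u(x)=I+\zeta\,u(G)u(g)
\]
for some $\zeta\in\FF$. Because $g$ and $G$ commute in the abelian group $\cG$, the matrices $u(g)$ and $u(G)$ commute up to a scalar that can be absorbed into $\zeta$, giving the stated form with $\lambda\in\FF$.

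For (\ref{e_main for_DD}), I would invoke Corollary \ref{tchi} to place $u(x)\in\cA_{\chi_x}$ and $u(X)\in\cA_{\chi_X}$, so the left-hand side of (\ref{e_main for_DD_ne}) is homogeneous of degree $\chi_x\chi_X=\epsilon$; on the right, $I\in\cA_\epsilon$ while $u(g)u(G)\in\cA_{f(gG)}$ with $f:\cG\to\fT$ as in Section \ref{ssAGA}. The crucial point is that non-elementarity forces $f(gG)\ne\epsilon$: should $f(gG)=\epsilon$, then $gG$ would act trivially on $\cA$, so by the argument of Proposition \ref{p99}(ii) the support of the grading would lie in $\langle gG\rangle^\perp$, which is a cyclic subgroup of $\fG$ of order $n$; but by Theorem \ref{t_2.15} the support $\fT$ of the division part must be isomorphic to $\prod\ZZ_{\ell_k}^2$, and such a group embeds in a cyclic group only when trivial, making the grading elementary. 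Given $f(gG)\ne\epsilon$, comparing the $\epsilon$- and $f(gG)$-homogeneous components in (\ref{e_main for_DD_ne}) forces $\lambda=0$. I expect this last structural step to be the main obstacle, since it is the only part that is not a mechanical specialization of the earlier theorems: it combines the classification of supports of division gradings from Theorem \ref{t_2.15} with the fact from Proposition \ref{p99} that the support of the grading is pinned inside the orthogonal complement of the kernel of the $\cG$-action.
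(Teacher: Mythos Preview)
Your approach matches the paper's: the first three displayed relations are specializations of Theorems~\ref{pCR} and~\ref{pij-ji}, and the passage from (\ref{e_main for_DD_ne}) to (\ref{e_main for_DD}) is a degree comparison in the $\fG$-grading. The paper's own proof of the last step is a single sentence asserting that $u(g)u(G)$ has degree $\ne\ve$ by inspection of (\ref{emixed-conjugation}), so your more explicit justification is a welcome expansion rather than a different method.

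There is, however, one genuine slip in that justification. From $f(gG)=\ve$ you conclude that $gG$ acts trivially on $\cA$; this is not correct. The condition $f(gG)=\ve$ says only that $\tau(gG)=\beta(f(gG),\tau)=1$ for all $\tau\in\fT$, i.e.\ $gG$ acts trivially on the division part $\cD$. Conjugation by $u(gG)$ still sees the diagonal scalars $\sigma_i(gG)$ from (\ref{emixed-conjugation}), so $gG$ may act nontrivially on the elementary part, and you cannot invoke Proposition~\ref{p99}(ii) to confine the full support $\supp\Gamma$ inside $\langle gG\rangle^\perp$.

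The repair is immediate and shorter than your detour: the identity $\tau(gG)=1$ for all $\tau\in\fT$ is exactly the statement $\fT\subset\langle gG\rangle^\perp$. Since $\langle gG\rangle^\perp$ is cyclic of order $n$ and, by Theorem~\ref{t_2.15}, a nontrivial $\fT$ contains a copy of $\ZZ_\ell\times\ZZ_\ell$ with $\ell>1$, this forces $\fT=\{\ve\}$, contradicting non-elementarity. With this correction the argument is complete.
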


\begin{proof}
It follows from (\ref{e_main for_DD}) that if the grading is not elementary then all matrices on the left hand side, as well as $I$ have degree $\ve$, while $u(g)u(G)$ has degree different from $\ve$ (see (\ref{emixed-conjugation}). As a result, in this case, we must have a more restrictive relation (\ref{e_main for_DD}).
\end{proof}

Let us define a character $\chi:\cG\to\FF^\times$ by setting $\chi(g)=\omega^{-1}$, $\chi(G)=\omega$.

The sufficient conditions for the action of $D(T_n)$ on $M_m(\FF)$ will now look like

\begin{gather*}
u(g)^n= I_m, u(G)^n=I_m,\\
 u(h)u(x)=\chi(h) u(x)u(h),\: u(h)u(X)=\chi^{-1}(h) u(X)u(h),\mbox{ for any }h\in\cG\\
 u(x)^n=\zeta I,\: u(x)^n=\xi I,\: \zeta,\xi\in\FF\\
 u(x)u(X)- \omega u(X)u(x)=I+\lambda u(g)u(G), \: \lambda\in\FF.
 \end{gather*}
 
 An explicit form for of the division actions of $D(T_n(\omega))$ can be given, as follows. First, if $x$ and $X$ act nontrivially, then by Theorem \ref{p99}, the support of the grading cannot be a proper subgroup of $\fG$. So if $\fG=(\mu)\times(\nu)$, $o(\mu)=o(\nu)=n$, then also $\fT=(\mu)\times(\nu)$. We also need to fix an alternating bicharacter $\beta:\fT\times\fT\to\FF^\times$. This is done by fixing a primitive $n$th root of 1, say $\pi$. Now, as we concluded above, $u(x)=\gamma X_\chi^{-1}$ while $u(X)=\beta X_\chi$, for some $\gamma,\delta\in\FF$.  It follows from (\ref{e_main for_DD}) that 
 \[
 u(x)u(X)- \omega u(X)u(x)=\gamma\delta(1-\omega).I=I.
 \]
Thus we must have $\gamma\delta=\dfrac{1}{1-\omega}$.

Now to determine $u(g)$ and $u(G)$, we should act exactly as in Section \ref{ssEP3A}. Easy calculations show that we must have $u(g)$ and $u(G)$ be scalar multiples of
\[
u(g)=X_\nu^{-\ell}\mbox{ and }u(G)=X_\mu^{\ell}\mbox{ where }\ell\mbox{ is uniqely defined by }\pi^\ell=\omega.
\]
One easily checks, for example,

\begin{gather*}
u(G)u(x)u(G)^{-1}=(X_\mu^{\ell^{-1}})(\gamma X_\chi^{-1})(X_\mu^\ell)=\beta(\mu^{-\ell},\chi^{-1})X_\chi^{-1}\\
=\beta(\mu^{\ell},\mu^{-1}\nu^{-1})X_\chi^{-1}=\beta(\mu,\nu)^{-\ell}X_\chi^{-1}=\pi^{-\ell} X_\chi^{-1}=\omega^{-1} u(g).
\end{gather*}

An explicit example for $n=3$ looks like the following. Fix a 3rd primitive root $\omega$ of 1 and consider two options. If $\pi=\omega^2$ then

\begin{gather*}
u(g)=\begin{pmatrix}0&1&0\\0&0&1\\1&0&0\end{pmatrix},\:u(G)=\begin{pmatrix}1&0&0\\0&\omega^2&0\\0&0&\omega\end{pmatrix},\\u(x)=\gamma\begin{pmatrix}0&1&0\\0&0&\omega^2\\\omega&0&0\end{pmatrix},\: u(X)=\delta \begin{pmatrix}0&0&\omega^2\\1&0&0\\0&\omega&0\end{pmatrix}.
\end{gather*}

If $\pi=\omega$ then

\begin{gather*}
u(g)=\begin{pmatrix}0&0&1\\1&0&0\\0&1&0\end{pmatrix},\:u(G)=\begin{pmatrix}1&0&0\\0&\omega&0\\0&0&\omega^2\end{pmatrix},\\u(x)=\gamma\begin{pmatrix}0&1&0\\0&0&\omega\\\omega^2&0&0\end{pmatrix},\: u(X)=\delta \begin{pmatrix}0&0&\omega\\1&0&0\\0&\omega^2&0\end{pmatrix}.
\end{gather*}

We also must have $\gamma\delta=\dfrac{1}{1-\omega}$.

\subsection{Mixed actions of $D(T_2)$}\label{ssGEADD2}

In this section we will use the following notation for the Sylvester/Pauli matrices in the case $n=2$: 
\[
A=\left(\begin{array}{cc}1&0\\0&-1\end{array}\right),\:B=\left(\begin{array}{cc}0&1\\1&0\end{array}\right),\:C=\left(\begin{array}{cc}0&1\\-1&0\end{array}\right).
\]
Then, if $D(T_2)$ acts on $R=M_{2k}(\FF)$ such that the grading by $\fG$ is mixed, the matrices of the inner action can be chosen as follows:
\[
u(g)=I_k\ot A,\: u(G)=I_k\ot B.
\]
The commutation relations (\ref{eugux}) and  (\ref{euGuX}) will now take the form

\begin{gather*}
u(g)u(x)=-u(x)u(g),\:u(G)u(x)=-u(x)u(G),\\u(g)u(X)=-u(X)u(g),\:u(G)u(X)=-u(X)u(G).
\end{gather*}

It is now easy to compute the matrices $u(x)$ and $u(X)$:
\begin{equation}\label{euxuX}
u(x)=P\ot C,\: u(X)=Q\ot C\mbox{ for some }P,Q\in M_k(\FF).
\end{equation}
Using (\ref{e_main for_DD}), we get the following
\[
I_{2k}=u(x)u(X)+u(X)u(x)=(P\ot C)(Q\ot C)+Q\ot C)(P\ot C)=-(PQ+QP)\ot I_2.
\]
\begin{equation}\label{e888}
 PQ+QP=-I_k.
\end{equation} 
Note that if $T\in \GL_{k}$ then one can conjugate $R$ by $T\ot I_2$ without changing the grading. This allows one to reduce, say $P$ to a canonical form $P_1=TPT^{-1}$ and find $Q_1= TQT^{-1}$ from the equation 
$P_1Q_1+Q_1P_1=-I_k$.

One also needs to remember that $(P_1\ot C)^2=-P_1^2\ot I_2$ and $(Q_1\ot C)^2=-Q_1^2\ot I_2$ are scalar matrices. So in our future calculations we may assume that $P$ and $Q$ are matrices satisfying (\ref{e888}), their squares are scalar matrices and $P$ is in the normal Jordan form.

\textit{Case 1}:   One of $P, Q$ is nilpotent, say $P^2=0$. Set $J=\begin{pmatrix}0&1\\0&0\end{pmatrix}$. Then we can take $P$ in the form 
\[
P=\begin{pmatrix}P'&0\\0&0\end{pmatrix}\mbox{ where }P'=\diag\{\underbrace{J,\ld,J}_r\}.
\]
Easy calculation using (\ref{e888}) show that $P'$ must be the whole of $P$.  It will be more convenient to write 
\begin{equation}\label{e777}
P=\begin{pmatrix}0&I_r\\0&0\end{pmatrix}\mbox{ where }k=2r.
\end{equation}

In the above formula zeroes are zero $r\times r$-matrices. We also write $Q=\begin{pmatrix}Q_{11}&Q_{12}\\Q_{21}&Q_{22}\end{pmatrix}$ where each $Q_{ij}$ is an $r\times r$-matrix. If we plug $P$ and $Q$, as just above, to (\ref{e888}), we will obtain 
\[
\begin{pmatrix}Q_{21}&Q_{11}+Q_{22}\\0&Q_{21}\end{pmatrix}=\begin{pmatrix}-I_r&0\\0&-I_r\end{pmatrix}.
\]
Then
\[
Q=\begin{pmatrix}Q_{11}&Q_{12}\\-I_r&-Q_{11}\end{pmatrix}.
\]
To further simplify $u(X)=Q\ot C$, we can conjugate this matrix by any matrix whose conjugation maps $u(g)=I_{2k}\ot A$ and $u(G)=I_{2k}\ot B$  to their scalar multiples (by $\pm 1$) and $u(x)$ to $u(x)+\mu u(g)$ (see Lemma \ref{liso_inn}). If one writes the conjugating matrix as
\[
T=T_I\ot I+T_A\ot A+T_B\ot B+T_C\ot C,
\]
then the condition $T(I_{2k}\ot A)T^{-1}=\pm I_{2k}\ot A$ and $T(I_{2k}\ot B)T^{-1}=\pm I_{2k}\ot B$ implies that $T$ must be one of $T_I\ot I,T_A\ot A,T_B,\ot B, T_C\ot C$. If we now conjugate $u(x)=P\ot C$ by one of these matrices, then, first of all we must have $\mu=0$ and also, depending on what matrix has been chosen, $T_IP=PT_I$, $T_AP=-PT_A$, $T_BP=-PT_B$ and $T_CP=PT_C$. 

In the case where $TP=PT$, using (\ref{e777}), we find that $T=\begin{pmatrix}T_{11}&T_{12}\\0&T_{11}\end{pmatrix}$. In the case $TP=-PT$, we have $T=\begin{pmatrix}T_{11}&T_{12}\\0&-T_{11}\end{pmatrix}$. Here $T_{11}\in\GL_r$ and $T_{12}\in M_r(\FF)$. Thus in both cases we can conjugate first by a matrix where $T_{11}=I_r$ and the conjugate by $T=\begin{pmatrix}T_{11}&0\\0&\pm T_{11}\end{pmatrix}$.

Before we do this, we remember that $Q^2=\alpha I_k$ for some $\alpha\in\FF$. Hence,
\[
\alpha I_k=\begin{pmatrix}Q_{11}^2-Q_{12}&Q_{11}Q_{12}-Q_{12}Q_{11}\\0&Q_{11}^2-Q_{12}\end{pmatrix}.
\]
As a result, $Q_{12}=Q_{11}^2-\alpha I_r$ and 
\[
Q=\begin{pmatrix}Q_{11}&Q_{11}^2-\alpha I_r\\-I_r&-Q_{11}\end{pmatrix}.
\]
If we conjugate this matrix by $T=\begin{pmatrix}I_r&T_{12}\\0&\sigma I_r\end{pmatrix}$, where $\sigma=\pm 1$, we will obtain the following
\[
TQT^{-1}=\begin{pmatrix}Q_{11}-T_{12}&-\sigma Q_{11}T_{12}+\sigma T_{12}^2+\sigma Q_{11}^2-\alpha\sigma I_r-\sigma T_{12}Q_{11}\\-\sigma I_r&T_{12}-Q_{11}\end{pmatrix}.
\]
If we set $T_{12}=Q_{11}$ then
\[
TQT^{-1}=\begin{pmatrix}0&-\alpha\sigma I_r\\-\sigma I_r&0\end{pmatrix}.
\]

\begin{proposition}\label{pMixedDT2} If $D(T_2)$ acts on $M_m(\FF)$, so that $u(g)$ does not commute with $u(G)$ and $u(x)$ is nilpotent then $m=2k$ and the action is isomorphic to precisely one inner action where 
\[
u(g)=I_k\ot A,\: u(G)=I_k\ot B,\: u(x)=\begin{pmatrix}0&I_r\\0&0\end{pmatrix}\ot C,\:u(X)=\begin{pmatrix}0&\tau I_r\\-I_r&0\end{pmatrix}\ot C,
\]
for some $\tau\in\FF$.
\end{proposition}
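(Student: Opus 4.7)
The paragraphs preceding the statement already do most of the work. With $\cA=M_k(\FF)\ot M_2(\FF)$ and $u(g)=I_k\ot A$, $u(G)=I_k\ot B$, Proposition \ref{p_main_for_DD} together with the discussion around (\ref{euxuX}) forces $u(x)=P\ot C$ and $u(X)=Q\ot C$ for some $P,Q\in M_k(\FF)$. The relation (\ref{e888}) reads $PQ+QP=-I_k$, and Theorem \ref{lSC} forces $P^2$ and $Q^2$ to be scalar matrices. Nilpotency of $u(x)$ then gives $P^2=0$. The block calculation displayed after (\ref{e888}) shows that compatibility with $PQ+QP=-I_k$ rules out any trivial Jordan block of $P$, so $k=2r$ (hence $m=2k$), and $P$ is brought to $\begin{pmatrix}0 & I_r\\0 & 0\end{pmatrix}$ by conjugating $\cA$ by a matrix of the form $T\ot I_2$.

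\textbf{Solving for $Q$.} Substituting the normalized $P$ into $PQ+QP=-I_k$ and writing $Q$ in $r\times r$ block form gives $Q=\begin{pmatrix}Q_{11} & Q_{12}\\-I_r & -Q_{11}\end{pmatrix}$; the condition $Q^2=\alpha I_k$ then forces $Q_{12}=Q_{11}^2-\alpha I_r$. Conjugating $\cA$ by $\begin{pmatrix}I_r & Q_{11}\\0 & I_r\end{pmatrix}\ot I_2$ preserves $u(g),u(G),u(x)$ and sends $Q$ to $\begin{pmatrix}0 & -\alpha I_r\\-I_r & 0\end{pmatrix}$. Setting $\tau:=-\alpha$ puts $(u(g),u(G),u(x),u(X))$ into the form asserted in the proposition.

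\textbf{Uniqueness of $\tau$.} Suppose the actions associated to $\tau_1$ and $\tau_2$ are isomorphic. By Theorem \ref{liso_inn} there is an invertible $R\in\cA$ and scalars $\lambda_1,\lambda_2\in\FF^{\times}$, $\mu_1,\mu_2\in\FF$ such that
\begin{gather*}
u_1(g)=\lambda_1 R u_2(g) R^{-1},\quad u_1(G)=\lambda_2 R u_2(G) R^{-1},\\
u_1(x)=R u_2(x) R^{-1}+\mu_1 u_1(g),\quad u_1(X)=R u_2(X) R^{-1}+\mu_2 u_1(G).
\end{gather*}
Expand $R=R_0\ot I+R_1\ot A+R_2\ot B+R_3\ot C$ and proceed as in the classification of conjugating matrices given immediately after (\ref{euxuX}): the first two equations (using $u(g)^2=u(G)^2=I$) force $\lambda_i^2=1$ and pin $R$ to one of the four forms $D\ot I$, $D\ot A$, $D\ot B$, $D\ot C$ with $D\in\GL_k(\FF)$. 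Comparing $M_2(\FF)$-components in the remaining two equations then forces $\mu_1=\mu_2=0$ and reduces them to $P=\varepsilon DPD^{-1}$ and $Q_{\tau_1}=\varepsilon' D Q_{\tau_2} D^{-1}$, where $\varepsilon,\varepsilon'\in\{\pm1\}$ are determined by the choice of $M\in\{I,A,B,C\}$. The first of these forces $D=\begin{pmatrix}E & F\\0 & \pm E\end{pmatrix}$, and substituting this $D$ into the second is a direct block computation, identical in flavour to the normalization of $Q$ above, yielding $\tau_1=\tau_2$.

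\textbf{Main obstacle.} The only genuinely new content is the uniqueness step: one must keep track of the four sign combinations $(\lambda_1,\lambda_2)\in\{\pm 1\}^2$ and the four possibilities for $M$, checking in each branch that the resulting block calculation returns $\tau_1=\tau_2$. This is bookkeeping rather than a new idea; the first two paragraphs of the argument simply collect conclusions already established in the preceding discussion.
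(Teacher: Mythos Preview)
Your argument is correct and follows the paper's approach exactly: the existence part recapitulates the preceding Case~1 discussion (normalizing $P$, solving the block equations for $Q$, and conjugating by $\begin{pmatrix}I_r&Q_{11}\\0&I_r\end{pmatrix}\ot I_2$), and the uniqueness part unpacks the paper's one-line appeal to ``possible shapes for the conjugating matrix $T$'' via Theorem~\ref{liso_inn}. Your uniqueness verification is more detailed than the paper's---you actually carry out the block computation showing $\tau_1=\tau_2$, whereas the paper leaves this implicit---but the underlying mechanism (pinning $R$ to $D\ot M$ with $M\in\{I,A,B,C\}$, forcing $\mu_1=\mu_2=0$ by comparing $M_2(\FF)$-components, then reading off the block form of $D$) is identical.
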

\begin{proof}
This follows if in the preceding argument we use conjugation  by a matrix $T\ot I_2$ where $T=\begin{pmatrix}I_r&T_{12}\\0& I_r\end{pmatrix}$. Also,  our argument about possible shapes for the conjugating matrix $T$ shows that different $\tau$ produce non-isomorphic actions.
\end{proof}

\textit{Case 2}:   $P^2=\alpha I_k$, $Q^2=\beta I_k$, $\alpha,\beta\ne 0$. Choose $\xi\in\FF$ such that $xi^2=\alpha$. Then after conjugation by a matrix $T\ot I_2$, which does not change the actions by $u(g)$ and $u(G)$, we can reduce $P$ to $P=\begin{pmatrix}\xi I_r&0\\0& -\xi I_s\end{pmatrix}$. Let apply (\ref{e888}) to $P$ as just above and $Q=\begin{pmatrix}X&Y\\Z&U\end{pmatrix}$, where the blocks have the sizes $r\times r$, $r\times s$, $s\times r$, and $s\times s$. Then
\[
\begin{pmatrix}\xi I_r&0\\0& -\xi I_s\end{pmatrix}\begin{pmatrix}X&Y\\Z&U\end{pmatrix}+\begin{pmatrix}X&Y\\Z&U\end{pmatrix}\begin{pmatrix}\xi I_r&0\\0& -\xi I_s\end{pmatrix}=\begin{pmatrix} -I_r&0\\0& -I_s\end{pmatrix}.
\]

In this case, $2\xi X=-I_r$,  $2\xi Y=-I_s$, and we can assume 
\[
Q=-\frac{1}{2\xi}\begin{pmatrix}I_r&Y\\Z&-I_s\end{pmatrix}.
\]
Let us also remember $Q^2=\beta I_k$, were $\beta\ne 0$. Then
\[
\beta I_m=\frac{1}{4\alpha}\begin{pmatrix}I_r+YZ&0\\0&ZY+I_s\end{pmatrix},\mbox{ hence }YZ=(4\alpha\beta -1)I_r, ZY=(4\alpha\beta -1)I_s.
\]
Unless $4\alpha\beta -1=0$, it follows that $r=s$ and $YZ=ZY=\dfrac{1}{4\alpha\beta -1}\:I_r$. In this latter case, any further reductions can only be provided by the matrices of the form $T=\begin{pmatrix}K&0\\0&L\end{pmatrix}$ or $U=T\begin{pmatrix}0&I_r\\I_r&0\end{pmatrix}$. We have
\begin{equation}\label{e666}
\begin{pmatrix}K&0\\0& L\end{pmatrix}\begin{pmatrix}I_r&Y\\Z&-I_r\end{pmatrix}\begin{pmatrix}K^{-1}&0\\0&L^{-1}\end{pmatrix}=\begin{pmatrix}I_r&KYL^{-1}\\LZK^{-1}& - I_r\end{pmatrix}.
\end{equation}
One can choose $K$ and $L$ so that $KYL^{-1}=I_r$. Now, in general,  $(LZK^{-1})(KYL^{-1})=\dfrac{1}{4\alpha\beta -1}\:I_r$. Thus, with the latter choice of $K$ and $L$, we should have and so $(LZK^{-1})=\dfrac{1}{4\alpha\beta -1}\:I_r$ and $Q=\begin{pmatrix}I_r&I_r\\\frac{1}{4\alpha\beta -1}\:I_r& -I_r\end{pmatrix}$. As a result, in the case where $4\alpha\beta\ne 1$, we have $k=2r$ and, for a choice of $xi$ with $xi^2=\alpha$,

\begin{gather*}
u(g)=I_k\ot A,\:u(G)=I_k\ot B,\:u(x)=\xi\begin{pmatrix}I_r&0\\0& -I_r\end{pmatrix}\ot C,\\ u(X)=\begin{pmatrix}I_r&I_r\\\frac{1}{4\alpha\beta -1}\:I_r& -I_r\end{pmatrix}\ot C.
\end{gather*}

 It is easy to see that for different choices of of $\xi$ and $\beta$, the actons parametrized by these constants are not isomorphic (remember $\alpha=\xi^2$).
 
 Now let us consider the remaining case where $YZ=ZY=0$. Using our calculation in (\ref{e666}), which is true also when $r\ne s$, we will reduce $Q$ to the form $Q= \begin{pmatrix}I_r&KYL^{-1}\\LZK^{-1}& - I_s\end{pmatrix}$ where now 
 \begin{equation}\label{e6666}
 (KYL^{-1})(LZK^{-1})=(KYL^{-1})(LZK^{-1})=0.
\end{equation} 
  For appropriate $K$ and $L$, we can assume $KYL^{-1}=\begin{pmatrix}I_t&0_{t,s-t}\\0_{r-t,t}& 0_{r-t,s-t}\end{pmatrix}$, for some $t\le r,s$. From (\ref{e6666}), we then find $LZK^{-1}=\begin{pmatrix}0_{t,t}&0_{t,r-t}\\0_{s-t,t}& \Xi\end{pmatrix}$, where $\Xi$ is a rectangular $(s-t)\times(r-t)$-matrix. In this case, we still have $\xi$ as one of the parameters of the action, but instead of one parameter $\beta$, we have an $(s-t)\times(r-t)$-array of parameters $\Xi$. So if we set $Y(r,s,t)=\begin{pmatrix}I_t&0_{t,s-t}\\0_{r-t,t}& 0_{r-t,s-t}\end{pmatrix}$ and $ Z(r,s,\Xi)=\begin{pmatrix}0_{t,t}&0_{t,r-t}\\0_{s-t,t}& \Xi\end{pmatrix}$, then the last family of the actions is given by
  
\begin{gather*}
u(g)=I_k\ot A,\:u(G)=I_k\ot B,\:u(x)=\xi\begin{pmatrix}I_r&0\\0& -I_s\end{pmatrix}\ot C,\\ u(X)=\begin{pmatrix}I_r&Z(r,s,\Xi)\\Y(r,s,t)& -I_s\end{pmatrix}\ot C.
\end{gather*}

\begin{proposition}\label{pMixedDT2_nonD} If $D(T_2)$ acts on $M_m(\FF)$, so that $u(g)$ does not commute with $u(G)$ and $u(x)$ is not nilpotent then $m=k$ and the action is isomorphic to an inner action where
 
\begin{gather*}
u(g)=I_k\ot A,\:u(G)=I_k\ot B,\:u(x)=\xi\begin{pmatrix}I_r&0\\0& -I_s\end{pmatrix}\ot C,\\ u(X)=\begin{pmatrix}I_r&Z(r,s,\Xi)\\Y(r,s,t)& -I_s\end{pmatrix}\ot C.
\end{gather*}
 
Two such actions, with parameters $r,s,t,\xi, \Xi$ and $r',s',t',\xi',\Xi'$, are isomorphic if and only if $r=r'$, $s=s'$, $t=t'$, $\xi=\xi'$ and there is $U\in \GL_s(\FF)$ such that $\Xi'=U\Xi U^{-1}$.
\end{proposition}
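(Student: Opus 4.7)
The plan is to organize and sharpen the analysis of Case~2 that precedes the statement. I would start from Proposition~\ref{pmixed} to obtain the canonical forms $u(g) = I_k \otimes A$ and $u(G) = I_k \otimes B$. The anticommutation relations (\ref{eugux})--(\ref{euGuX}), together with the fact that the joint centralizer of $A$ and $B$ in $M_2(\FF)$ is scalar while their joint anticentralizer is spanned by $C$, force $u(x) = P \otimes C$ and $u(X) = Q \otimes C$ for some $P, Q \in M_k(\FF)$. The nonelementary-grading relation (\ref{e_main for_DD}), with $\omega = -1$, then reduces via $C^2 = -I_2$ to the single constraint $PQ + QP = -I_k$, and Theorem~\ref{lSC} gives that both $P^2$ and $Q^2$ are scalar matrices, say $P^2 = \alpha I_k$ and $Q^2 = \beta I_k$.

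Next I would use the nonnilpotency of $u(x)$, which translates to $\alpha \neq 0$, to pick $\xi \in \FF$ with $\xi^2 = \alpha$ and normalize $P$ by conjugation with $T \otimes I_2$ (which leaves both $u(g)$ and $u(G)$ unchanged) to the diagonal form $P = \xi \diag(I_r, -I_s)$ with $r + s = k$. Solving $PQ + QP = -I_k$ in $2 \times 2$ block form pins down the diagonal blocks of $Q$ as explicit scalars and leaves $Q$ parameterized by two off-diagonal blocks $Y$ and $Z$ of sizes $r \times s$ and $s \times r$. Imposing $Q^2 = \beta I_k$ then forces $YZ$ and $ZY$ to equal $(4\alpha\beta - 1)$ times the identity.

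The final step is to exploit the remaining conjugation freedom --- by block-diagonal matrices $\diag(K, L) \otimes I_2$, the stabilizer of all previous choices --- to bring $(Y, Z)$ to canonical form. When $4\alpha\beta \neq 1$, this forces $r = s$ and collapses to the case $t = r$ of the claimed parameterization. When $4\alpha\beta = 1$, one first normalizes $Y \mapsto Y(r, s, t)$ with $t = \rank Y$, after which the constraints $YZ = ZY = 0$ force $Z$ into the form $Z(r, s, \Xi)$ for an arbitrary $(s - t) \times (r - t)$ block $\Xi$. The isomorphism criterion (Theorem~\ref{liso_inn}), combined with a census of which conjugations survive each normalization, yields the stated equivalence on the parameter tuple $(r, s, t, \xi, \Xi)$: the residual stabilizer acts on $\Xi$ through an adjoint $\Xi \mapsto U \Xi U^{-1}$.

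The main obstacle I expect is the careful bookkeeping at this last stage. After the normalizations of $P$ and $Y$, I need to identify the precise residual stabilizer and compute its action on $\Xi$. Extra care is needed to rule out less obvious conjugating elements --- for instance block-antidiagonal matrices that swap the $\pm\xi$-eigenspaces of $P$, or conjugators of the form $T \otimes A$, $T \otimes B$, or $T \otimes C$ which preserve $(u(g), u(G))$ only up to signs but could in principle realize an isomorphism of actions via the scalar rescalings permitted by Theorem~\ref{liso_inn}. Verifying that all such options either change the sign of the invariant $\xi$, or permute $(r, s)$, or swap $Y \leftrightarrow Z$ in a way that preserves the canonical parameterization should close out the classification.
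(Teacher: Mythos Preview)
Your proposal is essentially the paper's own argument: the text preceding the proposition \emph{is} its proof, and you have reproduced the structure of that Case~2 analysis step for step (normalize $P$ via $\xi$, solve $PQ+QP=-I_k$ for the diagonal blocks of $Q$, use $Q^2=\beta I_k$ to constrain $YZ$ and $ZY$, then split on whether $4\alpha\beta-1$ vanishes and reduce the off-diagonal blocks via $\diag(K,L)$). Your treatment of the isomorphism question is in fact more thorough than the paper's, which does not revisit in Case~2 the catalogue of admissible conjugators $T\otimes I$, $T\otimes A$, $T\otimes B$, $T\otimes C$ that it worked out in Case~1.

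One small caution: your remark that the subcase $4\alpha\beta\neq 1$ ``collapses to the case $t=r$ of the claimed parameterization'' does not match what the paper does. In the paper that subcase is recorded with its own explicit form (both off-diagonal blocks nonzero scalar), and the $Y(r,s,t)$, $Z(r,s,\Xi)$ description is introduced only afterwards, for the ``remaining case'' $YZ=ZY=0$. Taking $t=r=s$ in the stated parameterization forces one off-diagonal block to vanish, so it does not literally recover the $4\alpha\beta\neq 1$ form. This is arguably an imprecision in the proposition itself rather than in your plan; just be aware that when you write it out you will need either to treat that subcase separately (as the paper does) or to argue carefully that it can be absorbed.
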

Thus, we have obtained a full classification of non-elementary actions of $D(T_2)$ on $M_n(\FF)$, where $\FF$ is an algebraically closed field of characteristic zero. 

\subsection{General elementary actions of the Drinfeld double}\label{ssGEADD}

We now examine the elementary actions of the Drinfeld doubles $D(T_n(\omega))$ in the case $n\ge 2$ on the algebras of the form $\cA=\End V$, $V$ a finite-dimensional vector space over the base field $\FF$, containing $n$th roots of 1. Let $m=\dim V$. Note that by  (\ref{e_main for_DD_ne}) we always have $u(x)u(X)-\omega u(X)u(x)=I+\lambda u(g)u(G)$. Because the grading by $\fG$ is elementary, we have 
\begin{equation}\label{eVvp}
V=\oplus_{\vp\in\fG}V_\vp
\end{equation}
and
\[
u(h)=\sum_{\vp\in\fG}\vp(h)\:\pi_\vp\mbox{ for any }h\in\cG.
\]
Here we denote by $\pi_\vp$ the projection of $V$ onto $V_\vp$.
 
Now let $\chi\in\fG$ be a uniquely defined character such that $\chi(g)=\chi(G)=\omega^{-1}$ Then, as we know, 
\[
u(h)u(x)u(h)^{-1}=\chi(h)u(x)\mbox{ and }u(h)u(X)u(h)^{-1}=\chi^{-1}(h)u(X),
\]
for all $h\in\cG$. It then follows that $u(x)\in \cA_\chi$ and $u(X)\in \cA_{\chi^{-1}}$. Any graded component $\cA_\delta$ of $\cA$ consists of linear transformations $a$, satisfying $a(V_\psi)\subset V_{\delta\psi}$, for any $\psi\in\fG$. If $\cT$ is the cyclic subgroup of order $n$ generated by $\chi$, the space $V$ is split as the sum of  subspaces $V_{\bar{\vp}}$, $\bar{\vp}\in \fG/\cT$,  which are invariant under the action of $u(g), u(G), u(x), u(X)$. We have 
\[
V_{\bar{\vp}}=\sum_{\psi\in\bar{\vp}}V_\psi
\]
If we fix any $\vp\in\bar{\vp}$ and set $V_\vp^k=V_{\vp\chi^k}$, for $k=0,1,\ld,n-1$, then
\[
u(x):V_\vp^k\to V_\vp^{k+1}\mbox{ and }u(X):V_\vp^k\to V_\vp^{k-1}.
\]
We know that there exist $\alpha,\beta\in\FF$ such that $u(x)^n=\alpha I_m$, $u(X)^n=\beta I_m$. 

In what follows we restrict ourselves to the case when one of $u(x)$, $u(X)$ is nonsingular. Let us assume $\alpha\ne 0$. In this case, as we've seen earlier in Section  \ref{ssMFCGGL}, for any $\vp\in\wG$ fixed, the dimensions of all $V_\vp^k$ are the same. Let us set 
\[
f_k=\pi_{\vp^{k+1}}\circ u(x)\circ\pi_{\vp^k}\mbox{ and }g_k= \pi_{\vp^{k-1}}\circ u(X)\circ\pi_{\vp^k}.\]
Now 
\[
f_{k-1}\circ\cdots\circ f_{k+1}\circ f_k=\alpha\:\pi_{\vp^k}.
\]
 So all $f_k$ are isomorphisms. Let $\dim V_\vp^k=r$. If we choose a basis $\cB^0$ in $V_\vp^0$ and set $\cB^k=f_k(\cB^{k-1})$, for any $k=1,2,\ld,n-1$, then we will obtain a basis $\cB=\cB^0\cup\ldots\cup\cB^{n-1}$ such that the matrix for $u(x)\vert_{V_{\bar{\vp}}}$ will have the form 
\[
\begin{pmatrix} 0&0&0&\cdots&0&\alpha I_r\\ I_r&0&0&\cdots&0&0\\0&I_r&0&\cdots&0&0\\\cdots&\cdots&\cdots&\cdots&\cdots\\0&0&0&\cdots&I_r&0\end{pmatrix}.
\]
To determine $u(X)$, note that the subspaces $V_{\bar{\vp}}$ are invariant under all $u(x)$, $u(X)$, $u(g)$, $u(G)$. This allows us to use the relations for these matrices derived earlier, for the restrictions to each of these subspace. For instance, we will have 
\[
g_{k+1}\circ\cdots\circ g_{k-1}\circ g_k=\beta\:\pi_{\vp^k}.
\]

Let us use $u(x)u(X)-\omega u(X)u(x)=I-\lambda u(g)u(G)$, restricted to $V_{\bar{\vp}}$. We have

\begin{gather*}
\sum_{k=0}^{n-1}(f_{k-1}\circ g_k-\omega g_{k+1}\circ f_k)=\sum_{k=0}^{n-1}(1-\lambda\vp\chi^k(gG))\pi_{\vp^k}\\\mbox{ or  }\\
\sum_{k=0}^{n-1}(f_{k-1}\circ g_k-\omega g_{k+1}\circ f_k)=\sum_{k=0}^{n-1}(1-\lambda\omega^{-2k}\vp(gG))\pi_{\vp^k}
\end{gather*}

Let us set $\rho_k=(1-\lambda\omega^{-2k}\vp(gG))$. Then for each $k=0,1,\ld,n-1$, we will have 
\[
f_{k-1}\circ g_k-\omega g_{k+1}\circ f_k=\rho_k\pi_{\vp^k}.
\]
 This relation allows us to express all $g_k$ in terms of one of them, once $f_0,\ld,f_{k-1}$ is known. For any $k=0,1,\ld,n-1$ we will have 
\[
g_k=\omega f_{k-1}^{-1}\circ g_{k+1}\circ f_k+\rho_k\,f_{k-1}^{-1}.
\]

Using the above basis $\cB$ for $V_\vp$, let us denote by $(v_{ij})$ the block diagonal matrix for $u(X)$ in this basis and $\left(u_{ij}\right)$ the matrix for $u(x)$. 
We have
\[
u(X)=\begin{pmatrix} 0&v_{12}&0&\cdots&0\\ 0&0&v_{23}&\cdots&0\\\cdots&\cdots&\cdots&\cdots&\cdots\\0&0&0&\cdots&v_{n-1,n}\\v_{n1}&0&0&\cdots&0\end{pmatrix}.
\]
Then
\[
v_{k-1,k}=\omega u_{k,k-1}^{-1}v_{k,k+1}u_{k+1,k}+\rho_k\,u_{k,k-1}^{-1}.
\]
We already know that the only nonzero blocks in this matrix are $u_{k+1,k}$, for $k=0,\ld,n-1$, and $u_{k+1,k}=I_r$ if $k=1,\ld,n-1$ while $u_{0,n}=\alpha I_r$. As a result,  we will obtain the recurrent matrix relation
\[
v_{k-1,k}=\omega v_{k,k+1}+\rho_kI_r.
\]
for $k=1,\ld,n-2$
and

\begin{gather*}
v_{n-1,0}=\alpha^{-1}(\omega v_{0,1}+\rho_nI_r)\\
v_{n-2,n-1}=\alpha(\omega v_{n-1,0}+\rho_{n-1}I_r)
\end{gather*}

This recurrent relation enables us to express all blocks of the matrix of the restriction $u(X)$ to  $V_{\bar{\vp}}$ by a single one, say, $v_{0,n-1}$. Moreover, if we conjugate all matrices in question by any $\diag\{T,T,\ld,T\}$, where $T$ is a non-singular square matrix of order $r$, then the matrices for $u(g), u(G), u(x)$ remain the same while we may assume that  $v_{0,n-1}$ is a matrix in a Jordan normal form.

\section{Actions of $H=u_q(\Sl_2)$}\label{suqsl2}

We adopt the following presentation of $H=u_q(\Sl_2)$ from \cite{AS}.  $H$ is generated by the elements  $a,x,y$ such that \begin{eqnarray}\label{ecr} 
 a^n&=&1,\; x^n=y^n=0\nonumber\\
 ax&=&\omega^2 xa,\; ay=\omega^{-2}ya,\; xy-yx=a-a^{-1}.
\end{eqnarray}

Here $n$ is an odd number, $n\ge 3$.
 
 The coproducts are given by 
\[
\Delta x=x\ot a+1\ot x,\;\Delta y=y\ot 1+a^{-1}\ot y, \Delta a=a\ot a.
\]
Here $\omega$ is a primitive $n$th root of 1, $n$ an odd number.

Suppose $H$ acts on a matrix algebra $\cA$. Then the action is inner, via a convolution invertible map $u:H\to \cA$ and given by the formulas below, where $A$ is any element of $\cA$.
\begin{eqnarray}\label{eaction}
a\ast A&=&u(a)Au(a)^{-1}\nonumber\\
x\ast A&=&u(x)Au(a)^{-1}-Au(x)u(a)^{-1}\nonumber\\
y\ast A&=&u(y)A-u(a)^{-1}Au(a)u(y)
\end{eqnarray}
\begin{proposition}\label{pRelUQSL2} The elements $u(a), u(x), u(y)\in \cA$ on the right hand sides of  the equations in \emph{(\ref{eaction})} can be so chosen that
\begin{equation}\label{eu1} 
 u(a)^n=\theta 1,
 \end{equation}
 \begin{equation}\label{eu2} 
 u(a)u(x)=\omega^2 u(x)u(a),
  \end{equation}
 \begin{equation}\label{eu3} 
 u(a)u(y)=\omega^{-2}u(y)u(a),
  \end{equation}
 \begin{equation}\label{eu4} 
 u(x)^n=\mu 1,\;u(y)^n=\nu 1
  \end{equation}
 \begin{equation}\label{eu5}  
 u(x)u(y)-u(y)u(x)=u(a)-\tau u(a)^{-1},
 \end{equation}
where $\theta\ne 0, \tau, \mu,\;\nu\in\FF$.
\end{proposition}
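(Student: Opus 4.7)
The plan is to derive the five relations (\ref{eu1})--(\ref{eu5}) by applying the general framework of Theorems \ref{pCR}, \ref{lSC}, and \ref{pij-ji}, adapted to the fact that in $u_q(\Sl_2)$ the skew-primitive $x$ is $(a,1)$-primitive while $y$ is $(1,a^{-1})$-primitive (the two distinct primitive types linked by Proposition \ref{p1aa1}).

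First, (\ref{eu1}) is immediate: since $a^n=1$ acts trivially on $\cA$, the element $u(a)^n$ conjugates trivially on $\cA$, hence is central, hence a scalar; and $\theta\ne 0$ because $u(a)$ is invertible. For (\ref{eu2}) and (\ref{eu3}) one reruns the derivation of Theorem \ref{pCR}: for $y$, whose primitive type $(1,a^{-1})$ matches the template of Section \ref{PHARO}, the theorem applies directly with $\chi(a)=\omega^{-2}$, and a scalar adjustment $u(y)\mapsto u(y)+\gamma u(a)^{-1}$ (allowed by Lemma \ref{lchange_u}) kills the error term. For $x$ the same derivation, now using the $(h,1)$-version of the action formula (\ref{e1}), goes through with the normalization $u(x)\mapsto u(x)+\lambda\cdot 1$ in place of $u(x)\mapsto u(x)+\lambda u(a)$. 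Relation (\ref{eu4}) then follows from Theorem \ref{lSC}: since $x^n=y^n=0$ in $u_q(\Sl_2)$ (so the $\mu_i$ of Definition \ref{dPHA} vanish for both generators), the theorem yields $u(x)^n=\sigma u(a)^n$ and $u(y)^n=\sigma'u(a)^{-n}$, and both right-hand sides are scalar by (\ref{eu1}).

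The main obstacle is (\ref{eu5}), because $xy-yx=a-a^{-1}$ does not match the template $x_jx_i-\chi_j(a_i)x_ix_j=\lambda_{ij}(1-a_ia_j)$ of Theorem \ref{pij-ji}, so the argument must be redone from scratch. The plan is to compute $x\ast(y\ast A)-y\ast(x\ast A)$ using the two different inner-action formulas (\ref{e1}) and (\ref{e2}), equate the result with $(a-a^{-1})\ast A=u(a)Au(a)^{-1}-u(a)^{-1}Au(a)$, and simplify using (\ref{eu2}) and (\ref{eu3}). Setting $R=u(x)u(y)-u(y)u(x)$ and $S=u(a)$, the cross terms should cancel and the identity should collapse to
\[
(R-S)AS^{-1}=S^{-1}A(R-S)\quad\text{for all }A\in\cA.
\]
A direct check using (\ref{eu2})--(\ref{eu3}) shows $SR=RS$ (the factors $\omega^{\pm 2}$ from moving $S$ past $u(x)$ and $u(y)$ cancel within each of $u(x)u(y)$ and $u(y)u(x)$), so $S$ commutes with $R-S$. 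Multiplying the displayed identity by $S$ on both sides then yields $(R-S)S\cdot A=A\cdot (R-S)S$ for every $A\in\cA$, forcing $(R-S)S\in\FF\cdot 1$ by centrality of $\cA$. Writing this scalar as $-\tau$ gives $R=S-\tau S^{-1}$, i.e.\ (\ref{eu5}). The technical care needed here is tracking the $\omega$-factors precisely so that the right-hand side comes out in the stated asymmetric form $u(a)-\tau u(a)^{-1}$ rather than, say, a single scalar.
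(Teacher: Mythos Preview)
Your argument is correct, and in particular your direct derivation of (\ref{eu5}) goes through cleanly: once one checks that the cross terms $-u(x)u(a)^{-1}A\,u(a)u(y)u(a)^{-1}$ and $u(a)^{-1}u(x)Au(y)$ cancel (which they do, using $u(a)u(y)u(a)^{-1}=\omega^{-2}u(y)$ and $u(a)^{-1}u(x)=\omega^{-2}u(x)u(a)^{-1}$), one is left with exactly $(R-S)AS^{-1}=S^{-1}A(R-S)$, and your centrality argument finishes it.

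However, your route differs from the paper's. Rather than redoing the computations of Theorems \ref{pCR}, \ref{lSC}, \ref{pij-ji} in the ``mixed'' $(a,1)$/$(1,a^{-1})$ setting, the paper \emph{rewrites} $u_q(\Sl_2)$ as a genuine $\cP(\cG,\cR,D)$ algebra by the substitution $x_1=xa^{-1}$, $x_2=y$, $a_1=a_2=a^{-1}$, $\chi_1(a)=\omega^2$, $\chi_2(a)=\omega^{-2}$; a short computation shows the relation $xy-yx=a-a^{-1}$ becomes exactly $x_2x_1-\chi_1(a_2)x_1x_2=-(1-a_1a_2)$, i.e.\ $\lambda_{12}=-1$. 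Now all three theorems apply verbatim, and Proposition \ref{p1aa1} (which says $u(x)=u(x_1)u(a)+\lambda I$) is used to translate the resulting relations for $u(x_1)$ back to relations for $u(x)$. What the paper's approach buys is that no computation is repeated: the point is precisely that $u_q(\Sl_2)$ falls under the already-proved framework, so (\ref{eu5}) is literally (\ref{eij-ji}) transported along the change of variables. What your approach buys is self-containment --- you never leave the original generators --- at the cost of reproving an instance of Theorem \ref{pij-ji} by hand.
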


\begin{proof}Note that actually $H$ is an algebra of the type described in the Definition \ref{dPHA}, if we set $\cG=(a)$, $x_1=xa^{-1}$, $a_1=a^{-1}$, $x_2=y$, $a_2=a^{-1}$, $\chi_1(a)=\omega^2$, $\chi_2(a)=\omega^{-2}$. We also have $N_1=N_2=n$, $\mu_1=\mu_2=0$, hence $x_1^n=x_2^n=0$. Also,
 
\begin{gather}
x_2x_1-\chi_1(a_2)x_1x_2=yxa^{-1}-\chi_1(a)^{-1}xa^{-1}y\\
=yxa^{-1}-\omega^{-2}xa^{-1}y=yxa^{-1}-xya^{-1}\\
=(a^{-1}-a)a^{-1}=-(1-a_1a_2)
\end{gather}

It follows that $\lambda_{12}=-1$. Notice also that $\chi_1(a_2)\chi_2(a_1)=1$, as needed in Definition \ref{dPHA}.
 Now one can apply Theorems \ref{pCR}, \ref{lSC}, \ref{pij-ji} to get our needed relations (\ref{eu1}),  (\ref{eu2}),  (\ref{eu3}),  (\ref{eu4}),  (\ref{eu5}). The main tool here is Proposition \ref{p1aa1}.
 
 For example, for any $g\in\cG$, we have
 
 \begin{gather*}
 u(g)u(x_1)u(g)^{-1}=\chi_1(g)u(x_1)\mbox{ so that }u(a)u(x_1)u(a)^{-1}=\omega^2u(x_1)
  \end{gather*}
  
Using Proposition \ref{p1aa1}, we get

 \begin{gather*}
u(a)u(x)u(a)^{-2}-\lambda u(a)^{-1}=\omega^2u(x)u(a)^{-1}-\omega^2\lambda u(a)^{-1}\\
u(a)u(x)u(a)^{-1}=\omega^2u(x)+\lambda(1-\omega^2) I,
  \end{gather*}
  
 as claimed.
 
 As another example,
 
\begin{gather*}
u(x_2)u(x_1)-\chi_1(a_2)u(x_1)u(x_2)=-1+\zeta_{12}u(a_1)u(a_2)\\
u(y)u(x)u(a)^{-1}-\omega^{-2}u(x)u(a)^{-1}u(y)=-1+\zeta_{12}u(a)^{-1}\\
u(y)u(x)-u(x)u(y)=-u(a)+\zeta_{12}u(a)^{-1}\\
u(x)u(y)-u(y)u(x)=u(a)-\zeta_{12}u(a)^{-1},
\end{gather*}
 
as needed.
\end{proof}
\subsection{Book Hopf algebra}\label{ssBHA}

One more Hopf algebra, which is a part of the classification of Hopf algebras of dimension $p^3$, $p$ is an odd prime number, in \cite{AS},  is the so called \textit{Book Hopf algebra} $\textbf{h}(q,m)$, where $q$ is $p$-th root of $1$,  and $m\in\ZZ_p\setminus\{ 0\}$. In terms of generators and defining relations generators and defining relations this algebra is given by
\[
\textbf{h}(q,m)=\langle a,x,y\:|\: axa^{-1}=qx,\:aya^{-1}=q^my,\: a^p=1,\: x^p=0,\:y^p=0,\: xy-yx=0\rangle.
\]
The coproduct is given by the following
\[
\Delta(a)=a\ot a,\: \Delta(x)=x\ot a+1\ot x,\: \Delta(y)=y\ot 1+a^m\ot y.
\]
The following is very similar by the statement and the proof  to Proposition \ref{pRelUQSL2}.

Suppose $H$ acts on a matrix algebra $\cA$. Then the action is inner, via a convolution invertible map $u:H\to \cA$ and given by the formulas below, where $A$ is any element of $\cA$.
\begin{eqnarray}\label{eaction_book}
a\ast A&=&u(a)Au(a)^{-1}\nonumber\\
x\ast A&=&u(x)Au(a)^{-1}-Au(x)u(a)^{-1}\nonumber\\
y\ast A&=&u(y)A-u(a)^mAu(a)^{-m}u(y)
\end{eqnarray}
\begin{proposition}\label{pBook} The elements $u(a), u(x), u(y)\in \cA$ on the right hand sides of  the equations in \emph{(\ref{eaction_book})} can be so chosen that
\begin{equation}\label{eu1_b} 
 u(a)^n=\theta 1,
 \end{equation}
 \begin{equation}\label{eu2_b} 
 u(a)u(x)=q u(x)u(a),
  \end{equation}
 \begin{equation}\label{eu3_b} 
 u(a)u(y)=q^mu(y)u(a),
  \end{equation}
 \begin{equation}\label{eu4_b} 
 u(x)^n=\mu 1,\;u(y)^n=\nu 1
  \end{equation}
 \begin{equation}\label{eu5_b}  
 u(x)u(y)-u(y)u(x)=\tau u(a)^{m},
 \end{equation}
where $\theta\ne 0, \tau, \mu,\;\nu\in\FF$.
\end{proposition}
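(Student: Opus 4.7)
The proof will mirror that of Proposition \ref{pRelUQSL2}: recognize $\textbf{h}(q,m)$ as an instance of $\cP(\cG,\cR,D)$ from Definition \ref{dPHA}, then invoke Theorems \ref{pCR}, \ref{lSC}, \ref{pij-ji} with Proposition \ref{p1aa1} and Lemma \ref{lchange_u} handling the conversion for the $(a,1)$-primitive generator $x$.

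First I would set up the identification. With $\cG=(a)$, the generator $y$ is already $(1,a^m)$-primitive with character $\chi_2(a)=q^m$, so take $x_2=y$ and $a_2=a^m$. For $x$, which is $(a,1)$-primitive, I replace it by $x_1=xa^{-1}$, so that $\Delta(x_1)=x_1\ot a^{-1}+a^{-1}\ot x_1$, giving $a_1=a^{-1}$ and $\chi_1(a)=q$. A quick check confirms the compatibility from Definition \ref{dPHA}: $\chi_1(a_1)=q^{-1}\ne 1$, $\chi_2(a_2)=q^{m^2}\ne 1$ (since $p$ is prime and $m\ne 0$ in $\ZZ_p$), and $\chi_1(a_2)\chi_2(a_1)=q^m\cdot q^{-m}=1$. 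The commutativity $xy=yx$, combined with $a^{-1}y=q^{-m}ya^{-1}$, yields $x_2x_1-\chi_1(a_2)x_1x_2=0$, so $\lambda_{12}=0$. Since $x^p=0$ and $y^p=0$, also $x_1^p=0$, giving $\mu_1=\mu_2=0$.

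Next I would apply the three theorems directly. Theorem \ref{pCR} permits choosing $u$ so that $u(a)u(x_1)=qu(x_1)u(a)$ and $u(a)u(y)=q^m u(y)u(a)$; the latter is already \eqref{eu3_b}. Theorem \ref{lSC} gives $u(x_1)^p=\sigma_1 u(a)^{-p}$ and $u(y)^p=\sigma_2 u(a)^{mp}$, both scalar since $u(a)^p=\theta I$ is automatic (because $a^p=1$ makes $u(a)^p$ central in $\cA$), establishing \eqref{eu1_b}. Theorem \ref{pij-ji} produces $u(y)u(x_1)-q^m u(x_1)u(y)=\zeta_{12}u(a)^{m-1}$ (up to absorbing the scalar from $u(a^{-1})u(a^m)$).

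Finally I would translate back to $u(x)$. By Proposition \ref{p1aa1}, $u(x)=u(x_1)u(a)+\lambda I$ for some $\lambda\in\FF$. A direct computation using $u(a)u(x_1)=q u(x_1)u(a)$ shows $u(a)u(x)-q u(x)u(a)=(1-q)\lambda u(a)$; by Lemma \ref{lchange_u}, replacing $u(x)$ by $u(x)-\lambda I$ preserves the action (since $x$ is $(a,1)$-primitive) and eliminates this term, yielding \eqref{eu2_b}. Raising $u(x_1)=u(x)u(a)^{-1}$ to the $p$-th power via the $q$-commutation shows $u(x)^p$ is a nonzero scalar multiple of $u(x_1)^p$, hence scalar, giving \eqref{eu4_b}. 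For \eqref{eu5_b}, I substitute $u(x_1)=u(x)u(a)^{-1}$ into the \ref{pij-ji} identity and use $u(a)^{-1}u(y)=q^{-m}u(y)u(a)^{-1}$: the factor $q^m$ then cancels, leaving $(u(y)u(x)-u(x)u(y))u(a)^{-1}=\zeta_{12}u(a)^{m-1}$, which rearranges to $u(x)u(y)-u(y)u(x)=\tau u(a)^m$ with $\tau=-\zeta_{12}$.

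The only real obstacle is the careful scalar bookkeeping during the $x\leftrightarrow x_1$ conversion: each application of Theorems \ref{pCR} and \ref{pij-ji} produces relations for $u(x_1)$ that become the clean relations for $u(x)$ only after absorbing correction terms via Lemma \ref{lchange_u}, exactly as in the proof of Proposition \ref{pRelUQSL2}. Once this bookkeeping is in place, the identities \eqref{eu1_b}--\eqref{eu5_b} fall out.
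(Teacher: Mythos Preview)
Your proposal is correct and follows essentially the same approach as the paper: identify $\textbf{h}(q,m)$ as an instance of $\cP(\cG,\cR,D)$ from Definition \ref{dPHA} (with $x_1=xa^{-1}$, $a_1=a^{-1}$, $x_2=y$, $a_2=a^m$), then apply Theorems \ref{pCR}, \ref{lSC}, \ref{pij-ji} together with Proposition \ref{p1aa1} to convert back from $u(x_1)$ to $u(x)$. The paper's own proof is in fact a one-line reference to Proposition \ref{pRelUQSL2} and these same results, so your outline is a faithful (and more detailed) expansion of it.
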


\begin{proof}
Again, in the same way as in Proposition \ref{pRelUQSL2}, we note that the algebra in question is an algebra from Definition \ref{dPHA}. After this, our relations (\ref{eu1_b}), (\ref{eu2_b}), (\ref{eu3_b}), (\ref{eu4_b}), (\ref{eu5_b}) follow with the use of Theorems \ref{pCR}, \ref{lSC}, \ref{pij-ji} and Proposition \ref{p1aa1}.
\end{proof}

\subsection{Actions on $M_2(\FF)$}\label{ssC23}

As an example of the results in the previous section, we consider the action of $H=u_q(\Sl_2)$, on $\cA=M_2(\FF)$. First of all, there is a basis of the underlying vector space $\FF^2$ where $u(a)=\lambda\left(\begin{array}{cc} 1&0\\0&\omega^k\end{array}\right)$, for $\lambda\in\FF$ satisfying $\lambda^n=1$, $1\le k<n$. If $u(x)=\tau\left(\begin{array}{cc} a&b\\c&d\end{array}\right)$, then
\[
u(a)u(x)u(a)^{-1}=\omega^2 u(x),\mbox{ that is } \left(\begin{array}{cc} a&\omega^{n-k}b\\\omega^k c&d\end{array}\right)=\left(\begin{array}{cc} \omega^2 a&\omega^2 b\\\omega^2 c&\omega^2 d\end{array}\right).
\]
Since we assume $n\ge 3$, we must have $a=d=0$. We also must have $\omega^{n-k}b=\omega^2b$ and $\omega^kc=\omega^2c$. Thus if none of $k$, $n-k$ equals 2, the action of $x$ is trivial (this also follows by Proposition \ref{p99}). Since the computation for $u(y)$ is similar, in the case none of $k, n-k$ equals 2, the action of $H$ is purely a group action, so is  just a grading by $\fG\cong\ZZ_n$. Since also $n\ne 4$, only one of $k$, $n-k$ equals 2. Let us first look at the case $k=2$. In this case
\[
u(x)=\left(\begin{array}{cc} 0&0\\p&0\end{array}\right), \; u(y)=\left(\begin{array}{cc} 0&q\\0&0\end{array}\right),
\]
for some $p,q\in\FF$. Clearly they satisfy all (\ref{eu1}) to (\ref{eu4}), so we need only to see what restrictions are imposed by (\ref{eu5}). We have 
\[
\left(\begin{array}{cc} -qp&0\\0&pq\end{array}\right)=\left(\begin{array}{cc} \lambda-\lambda^{-1}\tau&0\\0&\lambda\omega^2-\lambda^{-1}\tau\omega^{n-2}\end{array}\right)
\]
Thus 
\[
\tau=\lambda^2\frac{1+\omega^2}{1+\omega^{n-2}}\mbox{ and } pq=\lambda\frac{\omega^2-\omega^{n-2}}{1+\omega^{n-2}}.
\]
 Similar is the case $n-k=2$.

If $n=4$ then $k=2$ and we have 
\[
u(x)=\left(\begin{array}{cc} 0&p\\q&0\end{array}\right), \; u(y)=\left(\begin{array}{cc} 0&r\\s&0\end{array}\right),
\]
for some $p,q,r,s\in\FF$. Again, we need to check (\ref{eu5}). It follows from (\ref{eu5}) that 
\[
\left(\begin{array}{cc} ps-qr&0\\0&qr-ps\end{array}\right)=(\lambda-\lambda^{-1}\tau)\left(\begin{array}{cc} 1&0\\0&\omega^2\end{array}\right),
\]
Since $\omega^2=-1$, it follows that any $u(x), u(y)$ can be chosen, and  (\ref{eu5}) will be satisfied with $\tau=\lambda-ps+qr$. This  enough freedom to produce different actions of $H$ on $\cA$. It would be good to determine the equivalence classes of actions.

\subsection{Actions of Drinfeld doubles of the Taft algebra via the actions of  $u_q(sl_2)$}\label{ssDDUQ}

 An alternate approach to the results in Section \ref{sDD} can be given as follows (see \cite[Corollary 4.8]{SM}, we use the notation from \cite[IX.6]{K}).
First, it is well-known that $u_{q}(sl_{2})$ is a Hopf homomorphic image of $D(H)$ ,
for $H=T_{n^{2}}(\omega)$ , by setting $\omega=q^{2}$ and defining $\Phi$ : $D(H)\rightarrow u_{q}$ as follows:
$G\mapsto K$ , $g\mapsto K^{-1}$ , $x\mapsto F$ , and $X\mapsto\overline{E}K^{-1}$ $:=-(q-q^{-1})E$.

Moreover $\Ker(\Phi)=D(H)k\mathcal{G}^{+}$, where $\mathcal{G}$ is the cyclic subgroup generated by $gG$. 

\begin{corollary}\label{c4.8}
 Each action of $u_q(sl_{2})$ on an algebra $\cA$ lifts to an action of the Drinfeld double $D(T_q)$ on $\cA$ via the homomorphism shown above. Thus any action of  $u_q(sl_{2})$ on $M_n(\FF)$can be obtained from an \emph{elementary} action of $D(T_q)$ described in Section \ref{ssGEADD}, with an additional provision that in \emph{(\ref{eVvp})} the summation is taken over the characters $\vp\in\fG$ such that $\vp(gG)=1$.
\end{corollary}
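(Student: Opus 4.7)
The plan is to treat the two assertions of the corollary separately, with the second reducing to a structural constraint on the support of the associated grading.

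For the first assertion, the argument is a routine pullback. Since $\Phi : D(T_q) \to u_q(\Sl_2)$ is a Hopf algebra homomorphism, any left action $*$ of $u_q(\Sl_2)$ on $\cA$ induces a left action of $D(T_q)$ on $\cA$ by $h \cdot a := \Phi(h) * a$; the required compatibilities with the unit and comultiplication of $D(T_q)$ follow directly from those for $u_q(\Sl_2)$, using $\Phi(1)=1$ and $(\Phi \otimes \Phi)\Delta_{D(T_q)} = \Delta_{u_q(\Sl_2)}\Phi$. No further computation is needed here.

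For the second assertion, I would first compute $\Phi(gG) = \Phi(g)\Phi(G) = K^{-1}K = 1$, so that $gG \in \Ker\Phi$ and hence acts trivially on $\cA = M_n(\FF)$ under the lifted action. Writing $\cK$ for the kernel of the $\cG$-action on $\cA$, this gives $gG \in \cK$, and Proposition \ref{p99}(ii) (more precisely the inclusion $\supp\Gamma \subset \cK^\perp$ established in its proof) forces the support of the associated $\fG$-grading to lie in $\cK^\perp \subseteq (gG)^\perp$. Since $\fG = \wh{(g)} \times \wh{(G)} \cong \ZZ_n \times \ZZ_n$, the annihilator $(gG)^\perp$ is precisely the cyclic subgroup of order $n$ consisting of those $\vp$ with $\vp(gG)=1$.

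The remaining step, which I expect to be the only real technical point, is to deduce that the grading is elementary. In the canonical decomposition $\cA \cong M_k(\FF) \otimes \cD$ afforded by Theorem \ref{t_2.6.}, the support $\fT$ of the graded division factor $\cD$ must, by Theorem \ref{t_2.15}, have the form $\ZZ_{\ell_1}^2 \times \cdots \times \ZZ_{\ell_r}^2$. Since $\fT$ is forced to sit inside the cyclic group $(gG)^\perp$, and no nontrivial group of that square-Sylow form embeds in a cyclic group, $\fT$ must be trivial and $\cD = \FF$. Hence the induced grading is elementary, and the lifted action of $D(T_q)$ falls under the analysis carried out in Section \ref{ssGEADD}, with the additional provision that the decomposition (\ref{eVvp}) of $V$ ranges only over those characters $\vp \in \fG$ satisfying $\vp(gG) = 1$, as required.
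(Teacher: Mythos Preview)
Your argument is correct and aligns with the paper's own reasoning. The paper does not give a separate proof of the corollary; rather, the key observation---that $gG$ acts trivially, hence the support of the $\fG$-grading lies in the cyclic annihilator $(gG)^\perp$, forcing the grading to be elementary---is already sketched in Section~\ref{ssADTN} (the paragraph beginning ``At the same time, it is possible that the action of $gG$ is trivial\ldots''), and the corollary is then stated as an immediate consequence. Your write-up makes this explicit, and your invocation of Theorem~\ref{t_2.15} to rule out a nontrivial $\fT$ inside a cyclic group supplies a detail the paper leaves implicit.
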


Note that the converse is not true: no division action of the Drinfeld double $D(T_q)$ comes as a lifting of an action of the $u_q(sl_{2})$, because $G$ and $g$ do not act as inverses of each other. For more specific examples see the formulas at the end of Section \ref{ssADTN}).

\end{document}